\DeclareMathAlphabet{\mathscrbf}{OMS}{mdugm}{b}{d}
\newcommand{\Rbb}{\mathbb{R}}
\newcommand{\beps}{\bm{\varepsilon}}
\newcommand{\bvphi}{\bm{\varphi}}
\newcommand{\bCcal}{\bm{\mathcal{C}}}
\newcommand{\bIcal}{\bm{\mathcal{I}}}
\newcommand{\bI}{\bm{I}}
\newcommand{\bU}{\bm{U}}
\newcommand{\bu}{\bm{u}}
\newcommand{\buid}{\bm{u}_{1,\delta}}
\newcommand{\buiid}{\bm{u}_{2,\delta}}
\newcommand{\bui}{\bm{u}_{1}}
\newcommand{\buii}{\bm{u}_{2}}
\newcommand{\bv}{\bm{v}}
\newcommand{\bw}{\bm{w}}
\newcommand{\bff}{\bm{f}}
\newcommand{\bffg}{\bm{f}_{\!\gamma}}
\newcommand{\bfi}{\bm{f}_{\!1}}
\newcommand{\bfii}{\bm{f}_{\!2}}
\newcommand{\bg}{\bm{g}}
\newcommand{\be}{\bm{e}}
\newcommand{\ba}{\bm{a}}
\newcommand{\bb}{\bm{b}}
\newcommand{\bc}{\bm{c}}
\newcommand{\bd}{\bm{d}}
\newcommand{\bad}{\bm{a}_\delta}
\newcommand{\bbd}{\bm{b}_\delta}
\newcommand{\bcd}{\bm{c}_\delta}
\newcommand{\bdd}{\bm{d}_\delta}
\newcommand{\bfd}{\bm{f}_\delta}
\newcommand{\bfhd}{{\,\hat{\!\bm{f}}}_\delta}
\newcommand{\bh}{\bm{h}}
\newcommand{\bn}{\bm{n}}
\newcommand{\bx}{\bm{x}}
\newcommand{\bM}{\mathbf{M}}
\newcommand{\bDc}{\bm{\mathcal{D}}}
\newcommand{\bPc}{\bm{\mathcal{P}}}
\newcommand{\bLc}{\mathscrbf{L}}
\newcommand{\bLcd}{\mathscrbf{L}_\delta}
\newcommand{\bLcsd}{\mathscrbf{L}^*_\delta}
\newcommand{\Escr}{\mathscrbf{E}}
\newcommand{\bEcd}{\bm{\mathcal{E}}_\delta}
\newcommand{\nab}{\bm{\nabla}}
\newcommand{\nabs}{\bm{\nabla}^{\operatorname{s}}}
\newcommand{\hess}{\bm{\nabla}^{\otimes2}}
\newcommand{\bzero}{\bm{0}}
\newcommand{\bHc}{{\bm{\mathcal{H}}}}
\newcommand{\dc}{\!:\!}
\newcommand{\p}{\partial}
\newcommand{\trsp}{^{\mbox{\tiny$\mathsf{T}$}}}
\newcommand{\dev}{^{\mbox{\tiny D}}}
\newcommand{\spc}{\operatorname{sp}}
\newcommand{\td}{\text{d}}
\newcommand{\ado}{\alpha_\delta^o}
\newcommand{\ta}{\tilde{\alpha}}
\newcommand{\bdo}{\beta_\delta^o}
\newcommand{\tb}{\tilde{\beta}}
\newcommand{\dn}{_{\delta(n)}}
\newtheorem{theorem}{Theorem}
\newtheorem{proposition}{Proposition}
\newtheorem{lemma}{Lemma}
\newtheorem{remark}{Remark}
\newtheorem*{remark*}{Remark}
\newtheorem{Assumptions}{Assumptions}
\begin{document}

\title[Reconstruction of parameters in elasticity from noisy full-field measurements]{Reconstruction of constitutive parameters in isotropic linear elasticity from noisy full-field measurements}

\author{Guillaume Bal, C\'edric Bellis$\,^{1}$, S\'ebastien Imperiale$\,^{2}$, Fran\c{c}ois Monard$\,^{3}$} 
\address{Dept of Applied Physics and Applied Mathematics, Columbia University, New York NY, 10027, USA}
\address{Now at: $^1$ LMA, CNRS UPR 7051, 13402 Marseille Cedex 20, France}
\address{\hspace{14mm}$^2$ INRIA, M3DISIM Project-Team, 91120 Palaiseau, France}
\address{\hspace{14mm}$^3$ Dept of Mathematics, University of Washington, Seattle WA, 98195, USA}

\ead{gb2030@columbia.edu, bellis@lma.cnrs-mrs.fr, sebastien.imperiale@inria.fr, fmonard@math.washington.edu}


\begin{abstract}
Within the framework of linear elasticity we assume the availability of internal full-field measurements of the continuum deformations of a non-homogeneous isotropic solid. The aim is the quantitative reconstruction of the associated moduli. A simple gradient system for the sought constitutive parameters is derived algebraically from the momentum equation, whose coefficients are expressed in terms of the measured displacement fields and their spatial derivatives. Direct integration of this system is discussed to finally demonstrate the inexpediency of such an approach when dealing with noisy data. Upon using polluted measurements, an alternative variational formulation is deployed to invert for the physical parameters. Analysis of this latter inversion procedure provides existence and uniqueness results while the reconstruction stability with respect to the measurements is investigated. As the inversion procedure requires differentiating the measurements twice, a numerical differentiation scheme based on an ad hoc regularization then allows an optimally stable reconstruction of the sought moduli. Numerical results are included to illustrate and assess the performance of the overall approach.
\vspace{-2mm}
\end{abstract}

\noindent{\it Keywords}: Quantitative parameter identification; Internal data; Regularization method; Elastography; Lam\'e system.

\section{Introduction}

The identification of constitutive parameters associated with deformable solids bears relevance to a wide range of applications \cite{BonnetIP} such as structural health monitoring, non-destructive material testing or medical imaging. A static or dynamical excitation applied to an elastic body gives rise to a measurable signature of such quantities of interest. The data, possibly noisy, might be then collected externally, i.e. at the boundary of the domain, or internally in the form of partial or full-field measurements. Classical identification methods aiming at the quantitative reconstruction of parameters revolve around the minimization of a given objective function. Such approaches typically suffer from requiring numerous forward solutions, and from an intrinsic ill-posedness yielding practical instabilities as discussed in e.g. \cite{Monard:phd}.

State-of-the-art experimental techniques \cite{Greenleaf,Parker,Grediac:Hild:FF,book:3D:imaging} offer a variety of non-invasive imaging modalities providing internal full-field measurements for (i) biological tissues using, e.g., ultrasound, magnetic resonance imaging or speckle interferometry, and (ii) materials by X-ray, neutron diffraction tomography or digital image correlation. Dedicated identification methods are also flourishing \cite{Parker,Avril2008}. The breakthrough is the availability of internal data, which has led to a paradigm shift in imaging: the constitutive parameters entering the model partial differential equation can now be reconstructed given the knowledge of one, or multiple, solutions of that PDE. In other words, the momentum equation is seen as a PDE for the unknown moduli with the measured displacement or strain fields constituting its spatially varying coefficients. This is the perspective adopted in \cite{McLaughlin} based on time-dependent data. In a similar context, the article \cite{Barbone2007} focuses on the case when one Lam\'e parameter is known {\it a priori} and only one measured displacement field is used to reconstruct the other one. In \cite{Barbone2010}, a dedicated variational formulation is employed to solve for the elasticity parameters using static displacement field measurements.


The approach adopted here is to be linked to the so-called {\em  hybrid} or {\em multi-wave} inverse problems \cite{Bal:rev:arxiv,Bal:inBook}, which under the assumption of a coupling between two physical phenomena, come as two-step inverse problems. A typical scenario is that a high-resolution imaging modality will provide {\em internal measurements} (e.g. the displacement fields, here) for a parameter-reconstruction problem involving an elliptic PDE. While this first step is assumed here, our starting point is the second step of this inversion. For such inverse problems where the forward model is an elliptic PDE, considering inversion from internal measurements (rather than, classically, measurements at the domain's boundary) greatly improve the mathematical ill-posedness, which practically implies great improvements in resolution on reconstructed images. As the problem remains mildly ill-posed, noisy measurements still require being regularized before the inversion step in order not to amplify high-frequency noise \cite{Engl}, though this regularization is much less stringent than when using boundary measurements, allowing recovery of information at smaller scales. \\

This study lies at the crossroads of experimental solid mechanics and hybrid inverse problems. It is similar in spirit to the so-called adjoint-weighted variational formulations introduced in \cite{Barbone:2008,Albocher:2009,Barbone2010}. However, advantage is taken here of an explicit formulation of a gradient system for the sought parameters. This approach stems from the method in \cite{Bal:inBook,Bal:CPAM,Monard:2012} for scalar diffusion equations, which is therefore extended here to the tensorial framework of elasticity. This enables further mathematical characterizations of the reconstruction stability and an inversion from a least squares approach. A regularization-based data differentiation scheme is also proposed to accommodate noisy measurements.

The article outline is as follows:
\begin{enumerate}[leftmargin=0cm,itemindent=8mm,labelwidth=\itemindent,labelsep=0cm,align=left,label=(\roman*),noitemsep,topsep=0pt] 
\item The governing equations of linear elasticity are presented in Section \ref{sec:prelim}. Section \ref{sec:rec:moduli} introduces the inverse problem of the quantitative reconstruction of the spatially-dependent constitutive moduli, namely the two eigenvalues of the elasticity tensor. Assuming the availability of a set of measured displacement fields, a system of PDEs for the unknown parameters is constructed by algebraic manipulations. Upon satisfying an invertibility condition, these equations are then recast into a simple gradient system.
\item In Section \ref{sec:ode}, this gradient system is discussed using a conventional ordinary differential equation-based approach which yields a local uniqueness and stability result.
\item Alternatively, when the available data is noisy, then a weak formulation of the least squares problem associated with the system at hand is presented in Section \ref{sec:varf}. Existence and uniqueness of a solution is finally shown using a standard analysis.
\item Strain and hessian tensors of measured displacement fields that enter the gradient system are clearly strongly detrimental to the inversion in the presence of noise. In this regard, the reconstruction stability in such configurations is analyzed in Section \ref{sec:stab:noise}.
\item Finally, a numerical differentiation scheme is proposed and analyzed in Section \ref{sec:num:reg}. It relies on a regularization of the measurements by $L^2$-projections on coarse, yet high-order, finite element spaces. The regularizing operator is constructed to enable an optimally stable reconstruction of the constitutive moduli. A set of numerical results is included in Section \ref{sec:num:res}.
\end{enumerate}

\section{Preliminaries}\label{sec:prelim}

Let $\Omega\subset \Rbb^d$ with $d=2$ or $3$, denote a regular enough elastic body which undergoes a time-harmonic infinitesimal transformation characterized by the displacement field $\bu$ and the frequency $\omega$. On noting $\p_{j}$ the $j$-th partial derivative and assuming that $|\p_{j} u_{i}|=o(1)$ for $1\leq i,j\leq d$, the corresponding linearized deformation is quantified by the second-order strain tensor $\beps=\nabs\bu:=\frac12[\nab \bu+(\nab\bu)\trsp]$. 
In the absence of any body force field, the displacement field satisfies the momentum equation \cite{Marsden} 
\begin{equation}
\nab\cdot(\bCcal\dc\beps)+\rho\,\omega^2\bu=\bzero \; \; \mbox{ in } \Omega,
\label{REFequ}
\end{equation}
where the mass density $\rho>0$ is assumed to be constant and with reference to the implicit time-harmonic factor $e^{\mathrm{i}\omega t}$. In the isotropic case considered, the elasticity tensor $\bCcal$ is characterized by only two eigenvalues $\alpha$ and $\beta$, see \cite{Knowles}, such that
\begin{equation}\label{decomp:C}
\bCcal=\frac{\alpha}{d}\bI\otimes\bI+\beta\Big(\bIcal_{\text{sym}}-\frac{1}{d}\bI\otimes\bI \Big),
\end{equation}
where $\bI$ and $\bIcal_\text{sym}$ are respectively the second-order and symmetric fourth-order identity tensors. In the ensuing analysis it will be seen that inverting for the elastic moduli $\alpha > 0$ and $\beta > 0$, rather than for the Lam\'e parameters $\lambda=(\alpha-\beta)/d$ and $\mu=\beta/2$, is facilitated by the decomposition and interpretation of the measured strain fields into elementary hydrostatic and deviatoric contributions. The analysis of the proposed reconstruction approach further requires that $(\alpha, \beta)$ are smooth enough, i.e.
\begin{Assumptions} \label{regularity_alpha} 
	The constitutive parameters satisfy: $ \alpha, \beta \; \in \; L^\infty(\Omega) \cap H^1(\Omega). $
\end{Assumptions} 
\noindent The boundedness assumption is a direct consequence of standard thermo-mechanical stability conditions \cite{Mehrabadi:62}. The requirement $\alpha, \beta \in H^1(\Omega)$ is more specifically associated with the $H^1(\Omega)$-norm that is employed hereinafter to measure the parameters reconstruction quality. Yet, some regularity on the solution of \eref{REFequ} is required, in general, and this is to be achieved with sufficiently smooth coefficients.\\ 

To be used in the ensuing developments, for $p=2$ or $p=\infty$ with associated Lebesgue space $L^p(\Omega)$, and $(m_1,m_2)\in \mathbb{N}^2$, let us introduce a norm associated with matrices $\mathbf{H}\in L^p(\Omega)^{m_1\times m_2}$ as
\begin{equation}\label{def:norm:mat}
  \| \mathbf{H} \|_{L^p} :=\|\, | \mathbf{H} |\, \|_{L^p},
\end{equation}
which is expressed in term of the Frobenius norm $|\mathbf{H}|=(\mathbf{H}\dc\mathbf{H})^{1/2}$ with double inner product ``$\,\dc\,$''. Moreover, given $\ell\geq0$, Sobolev spaces $W^{\ell,p}(\Omega)$ and $H^{\ell}(\Omega)=W^{\ell,2}(\Omega)$, then for all vectors $\bh\in W^{\ell,p}(\Omega)^{m_1}$, the definition \eref{def:norm:mat} remains valid with $m_2=1$ when $\ell=0$ by replacing the Frobenius norm by the Euclidean norm $|\bh|=(\bh\cdot\bh)^{1/2}$ that uses the single inner product. When $\ell\geq1$, given an orthonormal basis $\{\be_i\}_{i}$ of $\Rbb^d$, one defines
\[
\| \bh \|_{H^{\ell}}=\Bigg(\sum_{i=1}^{m_1}\|  \bh\cdot\be_i \|^2_{H^{\ell}}\Bigg)^{\!\frac{1}{2}}\quad\text{and}\quad \| \bh \|_{W^{\ell,\infty}}=\sum_{i=1}^{m_1}\|  \bh\cdot\be_i \|_{W^{\ell,\infty}}.
\]

\section{Reconstruction of elastic moduli}\label{sec:rec:moduli}

This study focuses on the reconstruction of the spatially varying elastic moduli $\alpha$, $\beta$ from the knowledge of two displacement fields $\bu_{1}$, $\bu_{2}$ within $\Omega$. The choice of this measurement set is supported by earlier studies \cite{Barbone2010,Bal:rev:arxiv,Bal:inBook}. For $n=1,2$, one denotes strain tensors by $\beps_n=\nabs\bu_n$ with trace and deviatoric counterpart given by
\begin{equation}\label{eq:classical_def}
t_n=\tr(\beps_{n}), \quad	\beps\dev_{n}=\beps_{n}-\frac{t_{n}}{d}\bI.
\end{equation}

\subsection{Inversion formula}

Plugging the decomposition \eref{decomp:C} into the momentum equation \eref{REFequ} one obtains
\begin{equation}\label{eq:sol_general}
\nab\cdot(\bCcal:\beps_n) + \rho\,\omega^2_n \bu_n = \nab\cdot \Big(\alpha\frac{t_n}{d}  \bI + \beta \beps_n\dev \Big) + \rho\,\omega^2_n \bu_n=\bzero, \qquad n=1,\,2,
\end{equation}
which, expanding using the product rule, yields
\begin{equation}\label{exp:hooke}
    \frac{t_n}{d} \nab\alpha + \alpha \nab\frac{t_n}{d} + \beps_n\dev\cdot\nab\beta + \beta\nab\cdot\beps_n\dev + \rho\,\omega^2_n \bu_n = \bzero, \qquad n=1,\,2.
\end{equation}
On introducing the matrices
\begin{equation}\label{eq:first_def_b}
\mathbf{A}_{n}= \left[\!\begin{array}{cc}\frac{t_{n}}{d}\bI & \beps_{n}\dev\end{array}\!\right]\in\Rbb^{d\times 2d}
\quad\text{and}\quad
\mathbf{B}_{n}=\left[\!
\begin{array}{cc}
\nab\frac{t_n}{d}  &  \nab\cdot\beps_{n}\dev
\end{array}\!\right]\in\Rbb^{d\times 2},
\end{equation}
then Eqn. \eref{exp:hooke} reads
\[
\mathbf{A}_{n}\left[\!\begin{array}{c} \nab \alpha \\ \nab\beta\end{array}\!\right]+\mathbf{B}_{n}\left[\!\begin{array}{c} \alpha \\ \beta\end{array}\!\right] = -\rho\,\omega_n^2 \bu_n, \qquad n=1,2.
\]
We then combine these equations into an overdetermined PDE system of the form 
\begin{equation}\label{MBdef}
  \mathbf{A}\left[\!\begin{array}{c} \nab \alpha \\ \nab\beta\end{array}\!\right]+\mathbf{B}\left[\!\begin{array}{c} \alpha \\ \beta\end{array}\!\right]= - \left[\!\begin{array}{c} \rho\,\omega_1^2 \bu_{1} \\  \rho\,\omega_2^2 \bu_2 \end{array}\!\right], \quad\text{where} \quad \mathbf{A} := \left[\!\begin{array}{c} \mathbf{A}_1 \\ \mathbf{A}_2 \end{array}\!\right] \quad \text{and} \quad \mathbf{B} := \left[\!\begin{array}{c} \mathbf{B}_1 \\ \mathbf{B}_2 \end{array}\!\right].
\end{equation}
We now make more precise when and how to invert the $2d\times 2d$ matrix-valued $\mathbf{A}$. 
\begin{lemma}\label{Ainv}
    Let us define 
\begin{equation}\label{eq:def_E}
	\Escr:=t_{1}\beps_{2}\dev-t_{2}\beps_{1}\dev
\end{equation} 
Suppose that  $\Escr$ is uniformly invertible, then $\mathbf{A}$ defined in \eref{MBdef} is uniformly invertible and  
\[
\mathbf{A}^{-1}=\left[\!
\begin{array}{cc}
d\,\beps_{2}\dev  &  -d\,\beps_{1}\dev\\
-{t_{2}}\bI  &  {t_{1}}\bI\\ 
\end{array}\!
\right]
\left[\!
\begin{array}{cc}
\Escr^{-1}  &  \bzero\\
\bzero &  \Escr^{-1}\\ 
\end{array}\!
\right].
\]
\end{lemma}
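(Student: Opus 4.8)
The candidate inverse is already furnished, so the plan is simply to verify it by direct block multiplication and then to promote the resulting pointwise identity to a uniform statement. First I would regard $\mathbf{A}$ as the $2\times2$ block matrix with $d\times d$ blocks read off from \eref{MBdef} and \eref{eq:first_def_b}, i.e. with first block-row $\big[\tfrac{t_1}{d}\bI \ \ \beps_1\dev\big]$ and second block-row $\big[\tfrac{t_2}{d}\bI \ \ \beps_2\dev\big]$, and likewise factor the proposed inverse exactly as the product of the two matrices displayed in the statement. It is most economical to check the one-sided product $\mathbf{A}\,\mathbf{A}^{-1}$: since $\mathbf{A}$ is square of size $2d$, a right inverse is automatically a two-sided inverse at each point.

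Second, I would carry out the four block products. The decisive point — and the only place where care is genuinely needed — is that $t_1,t_2$ are scalar fields, hence commute with every matrix factor, whereas the deviatoric strains $\beps_1\dev,\beps_2\dev$ and $\Escr^{-1}$ do not commute in general, so the left-to-right ordering must be preserved throughout. With this in mind the $(1,1)$ block collapses to $\big(t_1\beps_2\dev-t_2\beps_1\dev\big)\Escr^{-1}=\Escr\Escr^{-1}=\bI$ by the very definition \eref{eq:def_E} of $\Escr$; the $(2,2)$ block produces the same $\Escr\Escr^{-1}=\bI$; and the two off-diagonal blocks cancel termwise (for instance the $(1,2)$ block equals $-t_1\beps_1\dev\Escr^{-1}+t_1\beps_1\dev\Escr^{-1}=\bzero$). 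This gives $\mathbf{A}\,\mathbf{A}^{-1}=\bI_{2d}$ at every point where $\Escr$ is invertible. Alternatively, one may \emph{derive} rather than guess the formula by eliminating the first block of unknowns: scaling the two block-rows by $t_2$ and $t_1$ and subtracting removes the $\tfrac{t_n}{d}\bI$ terms and leaves $-\Escr$ acting on the second block, whose inverse yields the second block-row of $\mathbf{A}^{-1}$, after which back-substitution gives the first.

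Finally I would transfer invertibility from $\Escr$ to $\mathbf{A}$ in the uniform sense. By hypothesis $\Escr^{-1}$ exists a.e.\ with $\|\Escr^{-1}\|_{L^\infty}<\infty$ (equivalently, the smallest singular value of $\Escr$ is bounded below by a positive constant a.e.\ in $\Omega$), while the entries $t_n$ and $\beps_n\dev$ are bounded by the $L^\infty$ regularity of the measured data. Since the displayed $\mathbf{A}^{-1}$ is a finite sum of products of such uniformly bounded factors, its Frobenius norm is controlled by a constant multiple of $\|\Escr^{-1}\|_{L^\infty}$ uniformly over $\Omega$, which is precisely the uniform invertibility of $\mathbf{A}$. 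I do not anticipate a genuine obstacle: the content is purely algebraic, and the only pitfalls are bookkeeping ones — preserving the order of multiplication because the $\beps_n\dev$ need not commute, and being explicit about the quantitative meaning of ``uniformly invertible'' when passing the bound from $\Escr^{-1}$ to $\mathbf{A}^{-1}$.
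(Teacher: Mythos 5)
Your verification is correct: the four block products work out exactly as you compute them (the $(1,1)$ and $(2,2)$ blocks each reduce to $(t_1\beps_2\dev-t_2\beps_1\dev)\Escr^{-1}=\bI$, the off-diagonal blocks cancel because the $t_n$ are scalars), and the passage from uniform invertibility of $\Escr$ to a uniform bound on $\mathbf{A}^{-1}$ via the $L^\infty$ bounds on $t_n$ and $\beps_n\dev$ is sound. The paper states the lemma without proof, and your direct block-multiplication check (or the equivalent elimination derivation you sketch) is precisely the omitted argument, so there is nothing to add.
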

Using Lemma \ref{Ainv} and upon inverting the block matrix $\mathbf{A}$, one arrives at the system 
\begin{align}
    \bLc(\alpha,\beta) = \bff, \qquad\text{where}\qquad \bLc(\alpha,\beta) := \left[\!\begin{array}{c} \nab \alpha \\ \nab\beta\end{array}\!\right] + \mathbf{M} \left[\!\begin{array}{c} \alpha \\ \beta\end{array}\!\right]
    \label{sys:op:ref}
\end{align}
with
\begin{equation}\label{def:M:f}
\mathbf{M}=\left[\!\begin{array}{cc} \ba & \bb \\ \bc & \bd  \end{array}\!\right]:=\mathbf{A}^{-1}\,\mathbf{B}\quad\text{and}\quad \bff=\left[\!\begin{array}{c} \bfi\\ \bfii \end{array}\!\right] := - \mathbf{A}^{-1}\left[\!\begin{array}{c} \rho\,\omega_1^2 \bu_{1} \\  \rho\,\omega_2^2 \bu_2 \end{array}\!\right].
\end{equation}
When  $\mathbf{M} \in L^\infty(\Omega)^{2d\times 2} $, the operator $\bLc\!$ is linear with values from $H^1(\Omega)^2$ into $L^2(\Omega)^{2d}$. Note that such a system is valid both in 2D and 3D upon substitution of the correct expression for the strain tensors and their traces. Moreover, one is required to verify that the tensor $\Escr$ is invertible.

The elasticity parameters can be reconstructed from Eqn. \eref{sys:op:ref} either by direct integration of this gradient system, which requires some compatibility conditions, or using a least squares based variational formulation. These two approaches will be addressed in sections \ref{sec:ode} and \ref{sec:varf} respectively. In any case, one can already deduce some regularity requirement so that \eref{sys:op:ref} make sense in a standard functional sense for which $\mathbf{M} \in L^\infty(\Omega)^{2d\times 2} $ and $ \bff \in L^\infty(\Omega)^{2d}$. These hypotheses are summarized below:

\begin{Assumptions}\label{mega_hyp}
The displacement field solutions satisfy $\bu_n \in H^2(\Omega)^d \cap W^{1,\infty}(\Omega)^d$ for $n=1,2$ and $\inf_{\Omega} (| \det \Escr \, |) \geq c_0 > 0$.
\end{Assumptions}
\begin{remark}
It is generally assumed that incompressible materials, such as biological soft tissues \cite{Fung} can be thought of as the limit of the compressible case in the limit $\alpha\to\infty$. In this case, $\tr(\beps) \to 0$ which implies $ \det\Escr \to0$ so that Assumptions \ref{mega_hyp} are violated. Therefore, the compressible and the incompressible cases are very dissimilar, the latter being beyond the scope of this article. Reference to \cite{Barbone2007}, Section 8, can be made for a discussion on the discrepancy between these two cases.
\end{remark}

\subsection{Invertibility conditions}\label{sec:invert:cond}

As seen in the section above, the reconstruction procedure requires the tensor $\Escr$ to be invertible. This section now aims at clarifying this assumption.

\begin{proposition} The tensor $\Escr:=t_{1}\beps_{2}\dev-t_{2}\beps_{1}\dev$ is non-invertible if and only if the strain tensors satisfy $t_1 \beps_{2} = t_2\beps_{1}$ when $d=2$, or $t_1 \beps_{2} = t_2\beps_{1} + \gamma(\bphi_{1}\otimes\bphi_{1}-\bphi_{2}\otimes\bphi_{2})$ when $d=3$ for some scalar $\gamma$ and two unit orthogonal vectors $\bphi_1$, $\bphi_2$. 
\end{proposition}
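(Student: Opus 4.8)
The plan is to reduce the statement to an elementary fact about real symmetric traceless matrices and then treat $d=2$ and $d=3$ separately. The first step is to simplify $\Escr$: substituting $\beps_n\dev=\beps_n-\tfrac{t_n}{d}\bI$ makes the spherical terms cancel, so that $\Escr=t_1\beps_2-t_2\beps_1$ pointwise. In particular $\Escr$ is symmetric, being a linear combination of the symmetric strain tensors, and traceless since $\tr\Escr=t_1t_2-t_2t_1=0$. With this identity the proposition becomes the purely algebraic question of characterizing when a symmetric traceless $d\times d$ matrix fails to be invertible, the right-hand sides in the statement being exactly $\Escr$ rewritten in terms of $t_1\beps_2$ and $t_2\beps_1$.

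For $d=2$ a symmetric traceless matrix reads $\bigl[\begin{smallmatrix} a & b\\ b & -a\end{smallmatrix}\bigr]$, hence $\det\Escr=-(a^2+b^2)\le0$ with equality iff $a=b=0$, i.e. iff $\Escr=\bzero$. Thus $\Escr$ is non-invertible precisely when $\Escr=\bzero$, which by the reduction is exactly $t_1\beps_2=t_2\beps_1$, and both implications follow at once.

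For $d=3$ I would invoke the spectral theorem: $\Escr$ has an orthonormal eigenbasis $\{\bphi_1,\bphi_2,\bphi_3\}$ with real eigenvalues $\mu_1,\mu_2,\mu_3$ summing to zero. If $\Escr$ is singular then $\mu_1\mu_2\mu_3=0$; relabelling so that $\mu_3=0$, tracelessness forces $\mu_2=-\mu_1=:-\gamma$, and the spectral decomposition reduces to $\Escr=\gamma(\bphi_1\otimes\bphi_1-\bphi_2\otimes\bphi_2)$ with $\bphi_1,\bphi_2$ unit and orthogonal, which together with $\Escr=t_1\beps_2-t_2\beps_1$ is the asserted relation. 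Conversely, any matrix $\gamma(\bphi_1\otimes\bphi_1-\bphi_2\otimes\bphi_2)$ with $\bphi_1\perp\bphi_2$ annihilates the unit vector $\bphi_3$ completing $\{\bphi_1,\bphi_2\}$ to an orthonormal basis, so it has nontrivial kernel and is singular.

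The substantive part is really just the opening reduction — recognizing that the deviatoric and spherical pieces conspire to make $\Escr$ symmetric and traceless — after which the spectral argument is routine. The only point demanding care, and the main (mild) obstacle, is the bookkeeping of degenerate cases: one should verify that the equivalence persists when eigenvalues coincide or vanish, e.g. that $\gamma=0$ in the $d=3$ formula reproduces $\Escr=\bzero$ and hence $t_1\beps_2=t_2\beps_1$, so that the stated form genuinely covers every singular configuration rather than only the generic one.
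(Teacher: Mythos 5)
Your proof is correct and takes essentially the same route as the paper's: reduce $\Escr$ to the symmetric traceless matrix $t_1\beps_2-t_2\beps_1$, compute $\det\Escr=-(a^2+b^2)$ explicitly for $d=2$, and use the spectral decomposition with eigenvalues summing to zero for $d=3$. Your explicit verification of the converse direction in three dimensions is a small addition the paper leaves implicit, but it does not change the argument.
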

\noindent In particular, $\Escr$ is non-invertible if, given strain tensor $\beps_{1}$, there exists $\kappa\in\Rbb\setminus\{0\}$ such that $\beps_2$ satisfies $\beps_{2}=\kappa\,\beps_1$ (2D) or $\beps_{2}=\kappa\,\beps_1+ t_1^{-1}\gamma(\bphi_{1}\otimes\bphi_{1}-\bphi_{2}\otimes\bphi_{2})$ (3D).

\begin{proof}[Proof of Proposition 1.] By construction, the tensor $\Escr=t_{1}\beps_{2}\dev-t_{2}\beps_{1}\dev=t_{1}\beps_{2}-t_{2}\beps_{1}$ is symmetric and traceless. In two dimensions, this implies that it takes the form $\Escr=\left[ \begin{smallmatrix} a & b \\ b & -a \end{smallmatrix} \right]$ with $a,b$ two real parameters, in which case $\det \Escr = - (a^2 + b^2)$, so $\det\Escr = 0$ is equivalent to $\Escr = \bzero$. In the case $d=3$ one has 
$
\spc(\Escr)=\{\gamma_{1},\gamma_{2},-(\gamma_{1}+\gamma_{2})\}.
$
with $\gamma_{1},\,\gamma_{2}\in\Rbb$. The condition $\det(\Escr)=0$ entails $\gamma_{i}=0$ for $i\in\{1,2\}$ or $\gamma_{2}=-\gamma_{1}$ so that one can conclude to the existence of an orthonormal basis $\{\bphi_{i}\}_{i}$ of $\Rbb^d$ such that $\Escr=\gamma(\bphi_{1}\otimes\bphi_{1}-\bphi_{2}\otimes\bphi_{2})$.
\end{proof}
 
\paragraph{How to achieve condition $\bm{\det\Escr\!\ne\! 0}$ ?} Solutions $\bu_1, \bu_2$ have to be constructed such that $\det \Escr\ne 0$ is achieved locally or globally. For example, a strategy is to generate displacement fields satisfying the properties that (i) $t_1 = 0$, (ii) $\beps_1$ invertible and (iii) $t_2\ne 0$, since in this case the tensor $\Escr$ takes the form $\Escr = -t_2 \beps_1$. From the practical standpoint, the remaining question concerns the control of these solutions from the domain boundary as discussed below.

\textit{Constant coefficients:} A simple prototypical example in the case of constant coefficients and stationary regime is to choose, in dimension $d=2$ or $ d=3$, 
\begin{equation}\label{sol:cst:coef}
    \bu_1(\bx)\cdot \be_j = \sum_{i\ne j} x_i, \quad 1\leq j\leq d, \qquad \text{and} \quad \bu_2(\bx) = \bx.
\end{equation}
In this case, we have that $\beps_1 = \bU - \bI$, where $\bU$ is an $d\times d$ matrix filled with ones, and $\beps_2 = \bI$. Here we globally have $t_1 = 0$, $t_2 = d\ne 0$ and $\det\beps_1 = (-1)^{d-1} (d-1)\ne 0$, so that $\Escr = t_1\beps_2-t_2\beps_1$ is nowhere singular. Note that leeway is allowed in choosing admissible fields $ \bu_1$, $ \bu_2$ and known analytical solutions of elasticity can be employed.

\textit{Non-constant coefficients:} In the case of non-constant coefficients, it is no longer clear theoretically that one can achieve the condition $\det \Escr\ne 0$ globally with only two solutions, although numerical results in Section \ref{sec:num:res} will show that numerically, two solutions, chosen after the example above for instance, are usually enough to obtain satisfactory global reconstructions. 

Following ideas initiated by Bal and Uhlmann in \cite{Bal:CPAM} (see also \cite{Bal2013,Monard2012a,Bal2013a}), let us mention that under some regularity assumptions, it can be proven that there exist boundary conditions such that condition $\det\Escr\ne 0$ holds locally. As seen in the scalar case, such proof will not construct the boundary conditions explicitly. Yet it justifies that the nonvanishing determinant condition can be achieved and provides a way to prove local reconstructibility of parameters from full-field measurements. The main tool for such proofs is the Runge approximation property, which itself follows from unique continuation property, established in e.g. \cite{Nakamura2005,Lin2009,Lin2011} in the context of elasticity.

\section{ODE-based approach and stability estimate}\label{sec:ode}

In this section we consider a direct integration approach of the gradient system \eref{sys:op:ref}, its main interest (stability estimate) and its main shortcoming (dependence on integration curve). Considering a connected subset $\Omega_0\subseteq\Omega$ and two points $\bx_0, \bx \in \Omega_0$, let 
\[
\gamma: [0,1]\ni t\mapsto \bxi(t)\quad \text{such that }\quad \bxi(0)=\bx_0 \text{ and } \bxi(1)=\bx,
\]
be a smooth curve with endpoints $\bx_0$ and $\bx$. Using the chain rule along this curve, the gradient system \eref{sys:op:ref} yields an ODE along the curve $\gamma$, of the form 
\begin{equation}\label{ode:ref}
\frac{\td \bvphi_\gamma(t)}{\td t} + \bM_\gamma(t) \bvphi_\gamma(t)= \bffg(t) \  \  \text{with}\  \   \bvphi_\gamma(t):=\left[\!\begin{array}{c}\alpha\circ \gamma(t) \\ \beta\circ \gamma(t)\end{array}\!\right]\  \   \text{and}\  \    \bvphi_\gamma(0)=\left[\!\begin{array}{c}\alpha(\bx_0) \\ \beta(\bx_0)\end{array}\!\right]
\end{equation}
and where one has defined, for $t\in [0,1]$
    \[
\bM_\gamma(t) := \left[\!
	\begin{array}{cc}
	     \dot\bxi(t)\cdot\ba\circ \gamma(t) & \dot\bxi(t)\cdot\bb\circ \gamma(t) \\ \dot\bxi(t)\cdot\bc\circ \gamma(t) & \dot\bxi(t)\cdot\bd\circ \gamma(t)
	\end{array}\!
	\right] \quad \text{and}\quad \bffg(t) := \left[\!\begin{array}{c} \dot\bxi(t)\cdot\bfi\circ \gamma(t)  \\  \dot\bxi(t)\cdot\bfii\circ \gamma(t) \end{array}\!\right].
\]

From the values $(\alpha(\bx_0),\beta(\bx_0))$, the values $(\alpha(\bx),\beta(\bx))$ can be computed via direct integration of ODE \eref{ode:ref}. Fixing $\bx_0$ and varying $\bx$ throughout $\Omega_0$, this induces a reconstruction procedure of $(\alpha,\beta)$ throughout $\Omega_0$, since $\Omega_0$ is connected, every $\bx\in \Omega_0$ can be connected to $\bx_0$ via a smooth curve. This approach yields a unique and stable reconstruction of the sought moduli in the sense of the following theorem. 
\begin{theorem}\label{thm:localstab} 
  Consider a convex subset $\Omega_0\subseteq\Omega$ and two data sets $(\bu_1, \bu_2)$, $(\bu_1', \bu_2')$ with $\mathcal{C}^2$-smooth components, associated to two set of parameters $(\alpha,\beta)$ and $(\alpha',\beta')$ and satisfying $\inf_{\Omega_0} (| \det \Escr \, |, | \det \Escr' \, |) \geq c_0 >0$. Then the data sets determine uniquely the parameters $(\alpha,\beta)$ and $(\alpha',\beta')$  over $\Omega_0$ and there exists a positive constant $C$ such that
  \[
    \|\alpha - \alpha'\|_{W^{1,\infty}(\Omega_0)} + \|\beta-\beta'\|_{W^{1,\infty}(\Omega_0)} \leq C\Big( \epsilon_0 + \sum_{n=1}^2 \|\bu_n - \bu_n'\|_{W^{2,\infty}(\Omega_0)}  \Big),
    \]
  where $\epsilon_0 = |\alpha(\bx_0)-\alpha'(\bx_0)| + |\beta(\bx_0) - \beta'(\bx_0)|$ is the error committed at $\bx_0\in\Omega_0$.  
\end{theorem}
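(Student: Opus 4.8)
The plan is to integrate the gradient system \eref{sys:op:ref} along straight segments, which are available because $\Omega_0$ is convex, and to compare the two reconstructions through a Grönwall argument. Fix the reference point $\bx_0$ and, for an arbitrary $\bx\in\Omega_0$, take the segment $t\mapsto\bx_0+t(\bx-\bx_0)$, $t\in[0,1]$, whose velocity is the constant vector $\bx-\bx_0$ with $|\bx-\bx_0|\le\operatorname{diam}(\Omega_0)$. Along this curve the pair $\bvphi_\gamma=(\alpha\circ\gamma,\beta\circ\gamma)\trsp$ solves the ODE \eref{ode:ref} with curve-coefficients $\bM_\gamma$, $\bffg$, and likewise $\bvphi_\gamma'$ solves the analogous ODE built from the primed data. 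Subtracting the two and setting $\bw=\bvphi_\gamma-\bvphi_\gamma'$, I would obtain the linear ODE
\[
\frac{\td\bw}{\td t}+\bM_\gamma\,\bw=\bg,\qquad \bg:=(\bffg-\bffg')-(\bM_\gamma-\bM_\gamma')\,\bvphi_\gamma',
\]
driven by the \emph{same} matrix $\bM_\gamma$ as the unperturbed problem, with initial datum $\bw(0)$ of size $\epsilon_0$.

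Next I would solve this ODE by the integrating factor (equivalently, Grönwall). With $\Phi$ the fundamental solution of $\dot\Phi=-\bM_\gamma\Phi$, $\Phi(0)=\bI$, one has $\bw(1)=\Phi(1)\bw(0)+\int_0^1\Phi(1)\Phi(s)^{-1}\bg(s)\,\td s$, whence
\[
|\bw(1)|\le C\Big(\epsilon_0+\|\bg\|_{L^\infty(0,1)}\Big),\qquad C=e^{\int_0^1|\bM_\gamma(s)|\,\td s}.
\]
The constant $C$ is uniform over all segments and both data sets, since $|\bM_\gamma(t)|\le\operatorname{diam}(\Omega_0)\,\|\bM\|_{L^\infty(\Omega_0)}$ and $\|\bM\|_{L^\infty}$, $\|\bM'\|_{L^\infty}$ are finite under Assumptions \ref{mega_hyp} and the determinant bound. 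As $\bvphi_\gamma'$ is bounded (the parameters lie in $L^\infty(\Omega_0)$), it then remains to control $\|\bffg-\bffg'\|_{L^\infty}$ and $\|\bM_\gamma-\bM_\gamma'\|_{L^\infty}$, which by the definition of the curve-coefficients reduce, up to the factor $\operatorname{diam}(\Omega_0)$, to bounding $\|\bff-\bff'\|_{L^\infty(\Omega_0)}$ and $\|\bM-\bM'\|_{L^\infty(\Omega_0)}$.

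The main obstacle, and the heart of the proof, is therefore the Lipschitz stability of the coefficient maps $(\bu_1,\bu_2)\mapsto(\bM,\bff)$ on the admissible set $\{\,|\det\Escr|\ge c_0\,\}$, which I would establish in cascade. The tensor $\Escr=t_1\beps_2\dev-t_2\beps_1\dev$ involves only first derivatives of the $\bu_n$, so it is bounded with $\|\Escr-\Escr'\|_{L^\infty}\le C\sum_n\|\bu_n-\bu_n'\|_{W^{1,\infty}}$; the lower bound $|\det\Escr|,|\det\Escr'|\ge c_0$ makes $\Escr^{-1}$ and $(\Escr')^{-1}$ uniformly bounded, and the resolvent identity $\Escr^{-1}-(\Escr')^{-1}=-\Escr^{-1}(\Escr-\Escr')(\Escr')^{-1}$ transfers the Lipschitz bound to the inverses. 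By Lemma \ref{Ainv} the matrix $\mathbf{A}^{-1}$ is an algebraic expression in $\beps_n\dev$, $t_n$ and $\Escr^{-1}$, hence bounded and Lipschitz in the $W^{1,\infty}$-norm of the data, whereas $\mathbf{B}$ collects the second-order quantities $\nab(t_n/d)$ and $\nab\cdot\beps_n\dev$ and is thus Lipschitz in the $W^{2,\infty}$-norm. Since $\bM=\mathbf{A}^{-1}\mathbf{B}$ and $\bff=-\mathbf{A}^{-1}(\rho\omega_1^2\bu_1,\rho\omega_2^2\bu_2)\trsp$ are products of such factors, the product rule for Lipschitz maps yields $\|\bM-\bM'\|_{L^\infty}+\|\bff-\bff'\|_{L^\infty}\le C\sum_n\|\bu_n-\bu_n'\|_{W^{2,\infty}(\Omega_0)}$. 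It is precisely the presence of $\mathbf{B}$, carrying two derivatives of the displacements, that forces the $W^{2,\infty}$ norm of the data on the right-hand side while only $W^{1,\infty}$ is needed for $\mathbf{A}^{-1}$.

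Combining the previous steps gives the pointwise estimate $|\alpha(\bx)-\alpha'(\bx)|+|\beta(\bx)-\beta'(\bx)|\le C(\epsilon_0+\sum_n\|\bu_n-\bu_n'\|_{W^{2,\infty}})$, and taking the supremum over $\bx\in\Omega_0$ controls the $L^\infty$ part of the left-hand side. To upgrade to $W^{1,\infty}$ I would use the system \eref{sys:op:ref} algebraically: from $(\nab(\alpha-\alpha'),\nab(\beta-\beta'))\trsp=(\bff-\bff')-\bM(\alpha-\alpha',\beta-\beta')\trsp-(\bM-\bM')(\alpha',\beta')\trsp$ and bounding the right-hand side with the $L^\infty$ estimate just obtained, the boundedness of $\bM$ and $(\alpha',\beta')$, and the Lipschitz bounds, the gradient estimate follows. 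Uniqueness is then a by-product: for identical data and identical reference values at $\bx_0$ the right-hand side vanishes, so $(\alpha,\beta)\equiv(\alpha',\beta')$ on $\Omega_0$, which is also the classical uniqueness of the Cauchy problem \eref{ode:ref}.
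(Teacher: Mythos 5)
Your argument follows exactly the route the paper indicates for this theorem (which it only sketches, citing Picard--Lindel\"of and Gronwall along characteristic curves before omitting the details): integration of the gradient system along segments, a Gronwall estimate for the difference of the two ODE solutions, and Lipschitz stability of the coefficient maps $(\bu_1,\bu_2)\mapsto(\mathbf{M},\bff)$ on the set where $|\det\Escr|\geq c_0$. The proposal is correct and in fact supplies the details the paper leaves out, including the upgrade from the pointwise $L^\infty$ bound to the $W^{1,\infty}$ bound via the gradient system itself.
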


The proof of Theorem \ref{thm:localstab} relies on the Picard-Lindel\"of theorem and Gronwall's lemma to control the propagation of errors along characteristic curves, similar analysis can be found in coupled-physics contexts in e.g. \cite{Monard:2012} and is omitted here. While this approach provides us with an explicit reconstruction procedure yielding unique and stable reconstructions, this uniqueness depends on the choice of curve joining $\bx_0$ and $\bx$. This is because the gradient system \eref{sys:op:ref} is redundant and requires that the measurements fulfill additional integrability conditions in order for the reconstruction to not depend on this choice of integration curve. Noise in the measurements will however violate these conditions, thereby making this reconstruction approach somewhat not as well-adapted to noisy measurements as the variational formulation that follows.

\section{Variational formulation}\label{sec:varf}

Hereafter, the regularity requirements in Theorem \ref{thm:localstab} are relaxed and Assumptions \ref{mega_hyp} are considered in the ensuing developments.

\subsection{Noisy data and least squares approach}\label{sec:noisy:data}

Following the above discussion, from now on, let $\{(\buid, \buiid)\}$ denote a set of noisy measurements where the real-valued index parameter $\delta\geq0$ is intended to represent a \emph{measure} of the noise. In such a case, direct differentiations of these measurements, as required in \eref{sys:op:ref}, might amplify noise at high spatial frequencies thereby preventing a successful identification. Therefore, the first step towards the formulation of a reconstruction algorithm is to introduce some approximations, in a sense that will be specified later on, of the strain and hessian tensors of the noisy displacements fields as 
\begin{equation}\label{eq:approx}
		\beps_{n, \delta} \, \sim \,  \beps_n\quad \text{and} \quad  \bHc_{n, \delta} \, \sim \, \sum_{k=1}^d\hess (\bu_{n,\delta}\cdot\be_k)\otimes \be_k  \quad \text{for } n=1,2.
\end{equation}
with the short-hand notation
\begin{equation}\label{def:hess}
\bHc_{n, \delta}^k=\bHc_{n, \delta}\cdot\be_k\quad \text{for }k=1,\dots,d \quad \text{so that} \quad   (\mathcal{H}^k_{n,\delta})_{ij} \sim \partial_{i}  \partial_{j}  (\bu_{n,\delta}\cdot\be_k).
\end{equation}
The explicit constructions of the second and third-order tensors $ \beps_{1, \delta}, \beps_{2, \delta}$ and $\bHc_{1, \delta}, \bHc_{2, \delta}$ from the noisy data $ (\buid, \buiid) $ is the focus of Section \ref{sec:num:reg}. For now, we consider the noisy operator $ \bLcd $, featuring the noisy matrix $\mathbf{M}_\delta $ with components $\bad$, $\bbd$, $\bcd$ and $\bdd$, together with the noisy source term $ \bfd $ that are formally obtained upon substitutions of
\begin{gather*}
\bu_n \leftarrow \bu_{n,\delta}, \\ t_n \leftarrow t_{n,\delta}=\tr(\beps_{n,\delta}), \quad	\beps\dev_n\leftarrow \beps\dev_{n,\delta}=\beps_{n,\delta}-\frac{t_{n,\delta}}{d}\bI, \quad  \Escr\leftarrow \Escr_\delta:=t_{1,\delta}\beps_{2,\delta}\dev-t_{2,\delta}\beps_{1,\delta}\dev, \\
	\nab t_n \leftarrow   \sum_{i,j=1}^d   (\mathcal{H}_{n, \delta}^j)_{ij} \, \be_i ,  \quad  \nab\cdot\beps_{n} \leftarrow \sum_{i,j=1}^d \left( \frac{d-2}{2\, d} (\mathcal{H}_{n, \delta}^j)_{ij} + \frac{1}{2} (\mathcal{H}_{n, \delta}^i)_{jj}  \right)\be_i.
	\end{gather*}
	for $n=1,2$ into the equations from \eref{eq:first_def_b} to \eref{def:M:f}. With the noisy operator $\bLcd$ at hand, the problem \eref{sys:op:ref} is correspondingly recast as the minimization problem 
\begin{equation}\label{sys:op:ref:d}
	\inf_{ (\alpha_\delta,\beta_\delta) } \| \bLcd (\alpha_\delta,\beta_\delta)  - \bfd \|_{L^2(\Omega)}^2,  \qquad \text{ for all }\delta>0.
\end{equation}
When $\delta=0$, we define $\bLc_0\equiv\bLc$ and $\bff_{\!0}\equiv\bff$ with the corresponding solution to \eref{sys:op:ref:d} satisfying $(\alpha_0,\beta_0)\equiv(\alpha,\beta)$ and it will be proven next by convergence analysis that this solution coincides with the limit solution to the perturbed problem series \eref{sys:op:ref:d} as $\delta\to0$.

We first use a lifting $(\alpha^\ell,\beta^\ell)$ of the assumed-to-be-known boundary values of $(\alpha,\beta)$
\begin{equation}\label{lifting_notation}
	\alpha_\delta=\ado+\alpha^\ell \quad \text{and} \quad \beta_\delta=\bdo+\beta^\ell,
\end{equation}
to pose a problem in $(\ado,\bdo)\in H^1_0(\Omega)^2$. In order to compute the solution of the minimization problem \eref{sys:op:ref:d}, see \cite{Bochev}, we consider solving the associated equation 

\begin{equation}
    \bLcsd \bLcd(\ado,\bdo) = \bLcsd\big(\bfd-\bLcd (\alpha^\ell,\beta^\ell)\big),    
    \label{eq:normal}
\end{equation}
 where $\bLcsd:L^2(\Omega)^{2d}\to H^{-1}(\Omega)^2$ denotes the adjoint of $\bLcd:H^1_0(\Omega)^2\to L^2(\Omega)^{2d}$ with $H^{-1}(\Omega)$ the dual space to $H_0^1(\Omega)$. Formal manipulations yield
\[
    \bLcsd\,\bh  = - \left[\!\begin{array}{c} \nab\cdot\bh_1 \\ \nab\cdot\bh_2  \end{array}\!\right] + {\mathbf{M}_\delta}^{\!\!\raisebox{0.5ex}{\tiny$\mathsf{T}$}} \left[\!\begin{array}{c}  \bh_1 \\  \bh_2 \end{array}\!\right], \qquad \forall\,\bh=(\bh_1, \bh_2) \in L^2(\Omega)^{2d}.
\]
One can check by application of Green's formula that for all $(\ado,\bdo)$ and $(\ta,\tb)$ in $H^1_0(\Omega)^2$
\begin{equation}\label{VF}
	\big\langle \bLcsd\bLcd(\ado,\bdo) ,(\ta,\tb) \big\rangle_{L^2(\Omega)} = \big\langle \bLcd(\ado,\bdo) ,\bLcd(\ta,\tb) \big\rangle_{L^2(\Omega)}.
\end{equation}

\begin{remark}
	The knowledge of the boundary values of  $\alpha$ and  $\beta$ over the entire boundary $\partial \Omega$ represents redundant data. However, such assumption simplifies the following analysis since only \emph{essential} boundary conditions, i.e. Dirichlet boundary conditions, can be employed in the variational formulation of the problem. When $ (\alpha, \beta) $ is known only in a subdomain $\partial \Omega_0\subseteq\partial\Omega$ then the gradient equation \eref{sys:op:ref} can be used to deduce \emph{natural} boundary conditions, in terms of fluxes $ \nab \alpha \cdot \bn $ and $ \nab \beta \cdot \bn $ on $\partial \Omega\setminus\partial\Omega_0$, which would then appear in the integration by parts \eref{VF}.
\end{remark}

Finally, note that an alternative to solving system \eref{eq:normal} is the {Adjoint-Weighted variational Equation method} presented in \cite{Barbone2010}. It is based on a variational formulation that features a \emph{weighting} operator as a substitute for the adjoint operator $\bLcsd$ in \eref{eq:normal}.

\subsection{Existence and Uniqueness}

In what follows we study existence, uniqueness and continuity estimates for the solutions to Eqn. \eref{eq:normal}. Although this analysis is fairly standard when the noise parameter $\delta$ is fixed, it remains non trivial to obtain continuity estimates that are independent of  $\delta$. This is the focus of the following developments. To this aim, let us express the following hypotheses that describe the assumed uniform boundedness w.r.t. $\delta$ of the noisy coefficients $(\bad,\, \bbd,\, \bcd,\, \bdd)$ featured in Sec. \ref{sec:noisy:data}.
\begin{Assumptions}\label{hyp:LB}
	There exists $B>0$ such that for all $\delta>0$ one has\\ \centerline{$\bad,\, \bbd,\, \bcd,\, \bdd \in L^{\infty}_B(\Omega)^d:=\big\{\bh\in L^\infty(\Omega)^d, \, \|\bh\|_{L^\infty(\Omega)}\leq B\big\}$.}  
\end{Assumptions}

We first prove a uniqueness results associated with the operator $\bLcsd\bLcd$.

\begin{lemma}\label{eq:injnormal}
    If Assumptions \ref{hyp:LB} hold, then for all $\delta>0$ the operator $\bLcsd\bLcd: H_0^1(\Omega)^2 \to H^{-1}(\Omega)^2$ is injective. 
\end{lemma}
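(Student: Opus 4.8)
The plan is to exploit the factorized structure of $\bLcsd\bLcd$: since $\bLcsd$ is by construction the adjoint of $\bLcd$, injectivity of the composition reduces to injectivity of $\bLcd$ itself on $H_0^1(\Omega)^2$. Concretely, I would take any $(\ado,\bdo)\in H_0^1(\Omega)^2$ with $\bLcsd\bLcd(\ado,\bdo)=\bzero$ and test the normal equation against $(\ado,\bdo)$ itself. By the Green-type identity \eref{VF} with $(\ta,\tb)=(\ado,\bdo)$, the left-hand side equals $\|\bLcd(\ado,\bdo)\|_{L^2(\Omega)}^2$, so $\bLcd(\ado,\bdo)=\bzero$ in $L^2(\Omega)^{2d}$. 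Everything is thus reduced to showing that the homogeneous first-order gradient system
\[
\nab\ado + \bad\,\ado + \bbd\,\bdo = \bzero, \qquad \nab\bdo + \bcd\,\ado + \bdd\,\bdo = \bzero
\]
admits only the trivial solution in $H_0^1(\Omega)^2$.

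For the latter I would run an energy argument on the scalar quantity $w:=\ado^2+\bdo^2\ge0$. Since $\ado,\bdo\in H_0^1(\Omega)$ one has $w\in W_0^{1,1}(\Omega)$ with $\nab w = 2\ado\,\nab\ado + 2\bdo\,\nab\bdo$; substituting the two homogeneous equations above and using the uniform bounds $\|\bad\|_{L^\infty},\ldots,\|\bdd\|_{L^\infty}\le B$ from Assumptions \ref{hyp:LB}, together with $2|\ado\,\bdo|\le \ado^2+\bdo^2$, yields the pointwise differential inequality $|\nab w|\le C\,w$ a.e. in $\Omega$, with $C=C(B)$. This is precisely the step where only the boundedness of the coefficients (and no continuity) is used, and where the Picard--Lindel\"of/ODE argument underlying Theorem \ref{thm:localstab} is unavailable.

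The crux — and the step I expect to be the main obstacle — is deducing $w\equiv0$ from $w\ge0$, $w\in W_0^{1,1}(\Omega)$ and $|\nab w|\le Cw$, since a naive Poincar\'e estimate only gives $\|w\|_{L^1}\le C C_P\|w\|_{L^1}$ and fails to close on a general domain. I would instead regularize by a logarithm: for $\eta>0$, the chain rule for Sobolev functions gives $v_\eta:=\log(1+w/\eta)\in W^{1,\infty}(\Omega)$ with $|\nab v_\eta|=|\nab w|/(w+\eta)\le C$ a.e., while the vanishing trace of $w$ forces $v_\eta$ to vanish on $\partial\Omega$. A nonnegative, $C$-Lipschitz function vanishing on the boundary of a bounded connected (Lipschitz) domain is bounded by $C$ times the geodesic distance to $\partial\Omega$, hence $0\le \log(1+w/\eta)\le C D$ with $D$ the geodesic diameter of $\Omega$; equivalently $w\le \eta(e^{CD}-1)$ a.e. Letting $\eta\to0$ yields $w\equiv0$, i.e. $\ado=\bdo=0$, which establishes injectivity of $\bLcd$ and therefore of $\bLcsd\bLcd$. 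The technical points to treat with care are the validity of the chain rule and of traces under the Lipschitz composition $s\mapsto\log(1+s/\eta)$, and the use of connectedness of $\Omega$ to upgrade the a.e. gradient bound into a genuine Lipschitz estimate with respect to the geodesic distance.
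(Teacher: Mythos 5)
Your proposal is correct, and its first half coincides with the paper's proof: testing $\bLcsd\bLcd(\ta,\tb)=\bzero$ against $(\ta,\tb)$ via the Green identity \eref{VF} gives $\|\bLcd(\ta,\tb)\|_{L^2(\Omega)}=0$, hence the pointwise inequality $|\nab\ta|+|\nab\tb|\leq C(|\ta|+|\tb|)$. Where you genuinely diverge is in the last step. The paper does not prove this step; it appeals either to ``an approach similar in spirit to Theorem \ref{thm:localstab}'' (an ODE/Picard--Lindel\"of argument along curves, which is problematic here since $L^\infty$ coefficients and $H^1_0$ functions do not restrict meaningfully to curves) or to a unique continuation principle cited from the literature and ``adapted to gradient equations.'' You instead give a self-contained, elementary argument: set $w=\ta^2+\tb^2\in W^{1,1}_0(\Omega)$, derive $|\nab w|\leq C(B)\,w$ a.e., and kill $w$ via the logarithmic substitution $v_\eta=\log(1+w/\eta)$, which is uniformly Lipschitz with zero trace and hence bounded by $C$ times the geodesic diameter, giving $w\leq\eta(e^{CD}-1)\to 0$. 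This is a spatial Gronwall argument robust to the mere $L^\infty$ bounds of Assumptions \ref{hyp:LB}, and it makes explicit exactly what regularity of $\Omega$ (bounded, connected, Lipschitz, so that quasiconvexity and the trace characterization of $W^{1,1}_0$ hold) is being used — assumptions the paper leaves implicit in ``regular enough.'' The trade-off is that your route requires the handful of technical verifications you flag (chain rule for Lipschitz compositions in $W^{1,1}$, preservation of zero trace, identification of $W^{1,\infty}$ with geodesic-Lipschitz functions), whereas the paper's citation of unique continuation outsources them; but your argument is arguably the more complete and more honest proof under the stated hypotheses.
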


\begin{proof}
    If $\bLcsd\bLcd (\ta,\tb) = \bzero$ for some $(\ta,\tb) \in H^1_0(\Omega)^2$ then multiplying by $(\ta,\tb)$, integrating over $\Omega$ and integrating by parts yield
    \begin{equation}\label{norm:opA}
	\|\bLcd(\ta,\tb)\|^2_{L^2(\Omega)}=\int_{\Omega} \big\{ |\nab\ta + \bad \ta + \bbd \tb|^2 + |\nab\tb + \bcd\ta + \bdd \tb|^2 \big\} \,\td \bx = 0, 
    \end{equation}
    thus we have $\bLcd(\ta,\tb) = \bzero$ throughout $\Omega$, which implies
	\[
		|\nab\ta | + |\nab\tb| \leq C \, \big( \,  |\ta| + |\tb|  \,  \big) \quad \text{in } \Omega,
	\]
	where $C$ is a constant that depends on $B$. Then either an approach similar in spirit to establishing uniqueness in Theorem \ref{thm:localstab}, or the unique continuation principle, see for instance \cite[Lemma 8.5]{Colton} adapted to gradient equations and $H^1_0(\Omega)$ solutions, allows to conclude that $(\ta,\tb)\equiv (0,0)$ throughout $\Omega$. 
\end{proof}

Next, we show an inequality of Poincar\'e-Friedrichs type which is of key importance for the ensuing developments and numerical schemes. 
\begin{proposition}\label{prop:coer}
Provided that Assumptions \ref{hyp:LB} are satisfied, then there exits $C>0$ depending on $B$ but not on $\delta$ such that for all $\delta>0$ one has
\begin{equation}\label{ineq:PF}
\|\ta\|_{H^1_0(\Omega)}^2 + \|\tb\|_{H^1_0(\Omega)}^2 \leq C\|\bLcd(\ta,\tb)\|_{L^2(\Omega)}^2, \qquad \forall(\ta,\tb)\in H^1_0(\Omega)^2.
\end{equation}
\end{proposition}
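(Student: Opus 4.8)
The plan is to prove \eref{ineq:PF} by a compactness–contradiction argument, since the naive direct estimate is bound to fail. Indeed, expanding the squared norm \eref{norm:opA} and applying Young's inequality to the cross terms $2\int_\Omega \nab\ta\cdot(\bad\ta+\bbd\tb)\,\td\bx$ only produces a lower-order remainder of the form $-C(\|\ta\|_{L^2(\Omega)}^2+\|\tb\|_{L^2(\Omega)}^2)$, which cannot be absorbed into the gradient terms because on $H^1_0(\Omega)$ the $L^2$-norm and the gradient norm are of the same order. This is the usual signature of a Poincar\'e–Friedrichs inequality with a zeroth-order perturbation, for which a qualitative (compactness) argument is required; the genuine difficulty here is to keep the constant independent of $\delta$.

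First I would argue by contradiction. If \eref{ineq:PF} failed with a $\delta$-independent constant, then choosing $C=n$ and using the linearity of $\bLcd$ to normalize, one obtains sequences $\delta_n>0$ and $(\ta_n,\tb_n)\in H^1_0(\Omega)^2$ with
\[
\|\ta_n\|_{H^1_0(\Omega)}^2+\|\tb_n\|_{H^1_0(\Omega)}^2=1\qquad\text{and}\qquad \|\bLc_{\delta_n}(\ta_n,\tb_n)\|_{L^2(\Omega)}\to 0.
\]
The normalization makes $(\ta_n,\tb_n)$ bounded in $H^1_0(\Omega)^2$, so by the Rellich–Kondrachov theorem a subsequence converges weakly in $H^1_0(\Omega)^2$ and strongly in $L^2(\Omega)^2$ to some $(\ta,\tb)\in H^1_0(\Omega)^2$. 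Simultaneously, Assumptions \ref{hyp:LB} bound the coefficients $\bad,\bbd,\bcd,\bdd$ uniformly in $L^\infty_B(\Omega)^d$, so a further subsequence converges weak-$\ast$ in $L^\infty$ to limits $\ba,\bb,\bc,\bd$ that again lie in $L^\infty_B(\Omega)^d$.

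Next I would pass to the limit in each component of $\bLc_{\delta_n}(\ta_n,\tb_n)\to\bzero$. In a product such as $\bad\,\ta_n$, the strong $L^2$-convergence of $\ta_n$ together with the weak-$\ast$ $L^\infty$-convergence of $\bad$ yields weak $L^2$-convergence to $\ba\,\ta$: writing $\bad\ta_n=\bad(\ta_n-\ta)+\bad\ta$, the first term is $O(\|\ta_n-\ta\|_{L^2(\Omega)})$ and the second converges weakly against any $L^2$ test function. Since $\nab\ta_n\rightharpoonup\nab\ta$ as well, the weak limit $(\ta,\tb)$ solves the gradient system $\nab\ta+\ba\ta+\bb\tb=\bzero$ and $\nab\tb+\bc\ta+\bd\tb=\bzero$ a.e. Because the limiting coefficients still obey the $L^\infty_B$ bound, the unique continuation argument in the proof of Lemma \ref{eq:injnormal}—which uses only the differential inequality $|\nab\ta|+|\nab\tb|\le C(|\ta|+|\tb|)$ with $C=C(B)$ and the homogeneous boundary values—applies verbatim and forces $(\ta,\tb)\equiv(0,0)$.

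Finally I would upgrade the convergence to strong $H^1_0$ to reach the contradiction. Having $\ta=\tb=0$, the strong $L^2$-convergence gives $\|\ta_n\|_{L^2(\Omega)},\|\tb_n\|_{L^2(\Omega)}\to 0$, whence $\|\bad\ta_n+\bbd\tb_n\|_{L^2(\Omega)}\le B(\|\ta_n\|_{L^2(\Omega)}+\|\tb_n\|_{L^2(\Omega)})\to 0$ and likewise for the second component. Combined with $\|\bLc_{\delta_n}(\ta_n,\tb_n)\|_{L^2(\Omega)}\to 0$, this forces $\|\nab\ta_n\|_{L^2(\Omega)}\to 0$ and $\|\nab\tb_n\|_{L^2(\Omega)}\to 0$, i.e. $\|\ta_n\|_{H^1_0(\Omega)}^2+\|\tb_n\|_{H^1_0(\Omega)}^2\to 0$, contradicting the normalization. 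The main obstacle throughout is exactly the uniformity in $\delta$, and it is dissolved by encoding the $\delta$-dependence in a sequence of uniformly $L^\infty$-bounded coefficients and exploiting the compact embedding $H^1_0(\Omega)\hookrightarrow L^2(\Omega)$, so that only the zeroth-order contributions survive in the limit and the vanishing of the limit can be invoked via Lemma \ref{eq:injnormal}.
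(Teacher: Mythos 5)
Your proof is correct and follows essentially the same route as the paper's: a contradiction argument with unit $H^1_0$ normalization, extraction of weakly/strongly/weak-$*$ convergent subsequences, identification of the weak limit of $\bLc_{\delta(n)}(\ta_n,\tb_n)$ as $\bLc_*(\alpha_*,\beta_*)=\bzero$, and an appeal to Lemma \ref{eq:injnormal} to force the limit to vanish. The only (harmless) difference is in the final contradiction: you deduce $\|\nab\ta_n\|_{L^2}\to0$ directly from the triangle inequality, whereas the paper extracts a lower bound $\|\alpha_*\|_{L^2}^2+\|\beta_*\|_{L^2}^2\geq 1/(4B^2)$ — both contradict the normalization equally well.
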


\begin{proof}
By contradiction, assume that, for any $\delta\geq0$ the inequality \eref{ineq:PF} does not hold, i.e.  one can construct a sequence  
\[
\big\{(\ta\dn,\tb\dn,\ba\dn,\bb\dn,\bc\dn,\bd\dn)\big\}_{n\geq 1} \in H^1_0(\Omega)^2\times \Big[L^\infty_B(\Omega)^d\Big]^4, 
\]
such that $\|\ta\dn\|_{H^1_0(\Omega)}^2 + \|\tb\dn\|_{H^1_0(\Omega)}^2 > C \, n \, \|\bLc\dn(\ta\dn,\tb\dn)\|_{L^2(\Omega)}^2$.\\

After re-normalization and without loss of generality we can assume that
\[
\text{(i)}\  \   \| \ta\dn \|_{\textcolor{black}{H^1_0(\Omega)}}^2 + \| \tb\dn \|_{\textcolor{black}{H^1_0(\Omega)}}^2=1,\qquad\qquad\text{(ii)} \  \   \big\|\bLc\dn\big(\ta\dn,\tb\dn\big)\big\|_{L^2(\Omega)}^2 < \frac{1}{n}  .
\]
From (i), the sequence $\big\{(\ta\dn,\tb\dn)\big\}$ is bounded in the Hilbert space $H^1_0(\Omega)^2$ and by definition of the Banach space $L^\infty_B(\Omega)$, there exist $\alpha_{*},\beta_{*}$ in $H^1_0(\Omega)$ and $\ba_*,\bb_*,\bc_*,\bd_*\in L^{\infty}_B(\Omega)^d$ such that up to subsequences one has
\begin{equation}
\begin{aligned}
& (\ta\dn,\tb\dn)\underset{H^1_0}{\rightharpoonup}(\alpha_*,\beta_*)\text{ and }(\ta\dn,\tb\dn)\underset{L^2}{\rightarrow}(\alpha_*,\beta_*),\\[1mm]
& (\ba\dn,\bb\dn,\bc\dn,\bd\dn)\underset{L^\infty}{\overset{*}{\rightharpoonup}}(\ba_*,\bb_*,\bc_*,\bd_*).
\end{aligned}
\label{seq:lim}
\end{equation}
Next, one seeks to assess the weak convergence limit of $\big\{\bLc\dn\big(\ta\dn,\tb\dn\big)\big\}$, i.e. whether
\[
    \big\langle\bLc\dn\big(\ta\dn,\tb\dn\big),(\bv,\bw)\big\rangle_{L^2(\Omega)} \underset{n\to+\infty}{\longrightarrow} \big\langle\bLc_*\big(\alpha_*,\beta_*\big),(\bv,\bw)\big\rangle_{L^2(\Omega)},\quad  \forall (\bv,\bw)\in L^2(\Omega)^{2d}.
\]
where the operator $\bLc_*$ is defined as in \eref{sys:op:ref} in terms of $\ba_*,\bb_*,\bc_*$ and $\bd_*$.
Given that 
\[\begin{aligned}
\big\langle\bLc\dn\big(\ta\dn,\tb\dn\big)-\bLc_*\big(\alpha_*,\beta_*\big),(\bv,\bw)\big\rangle_{L^2(\Omega)}=\hspace*{6.1cm}\\
+\int_{\Omega}\left\{ \nab (\ta\dn-\alpha_*)\cdot \bv+\nab  (\tb\dn-\beta_*)   \cdot \bw \right\}\td \bx\\
+\int_{\Omega}\left\{ \big[(\ba\dn-\ba_*)\cdot\bv+(\bc\dn-\bc_*)\cdot\bw\big]\alpha_*+\big[(\bb\dn-\bb_*)\cdot\bv+(\bd\dn-\bd_*)\cdot\bw\big]\beta_* \right\}\td \bx \\
+\int_{\Omega}\left\{ (\ba\dn\cdot\bv+\bc\dn\cdot\bw)(\ta\dn-\alpha_*)+(\bb\dn\cdot\bv+\bd\dn\cdot\bw)(\tb\dn-\beta_*) \right\}\td \bx.
\end{aligned}\]
Owing to the respective convergences \eref{seq:lim} of the right-hand side terms in the previous equation, i.e. weak convergence in $H^1_0$, weak-$*$ in $L^\infty$ and  strong in $L^2$, one can conclude that $ \bLc\dn\big(\ta\dn,\tb\dn\big) $ converges weakly in $L^2$ to $ \bLc_*\big(\alpha_*,\beta_*\big)$. This implies
\[
	\|  \bLc_*\big(\alpha_*,\beta_*\big) \|_{L^2(\Omega)} \leq \underset{n \rightarrow +\infty}{\lim \inf} \,  	\|   \bLc\dn\big(\ta\dn,\tb\dn\big)   \|_{L^2(\Omega)} = 0.
\]
Finally, one has $\bLc_*\big(\alpha_*,\beta_*\big)=\bzero$ which using Lemma \ref{eq:injnormal} yields $\alpha_*=\beta_*=0$.  Remark that
\begin{multline*}
	\frac{1}{n} \geq \| \bLc\dn\big(\ta\dn,\tb\dn\big) \|_{L^2(\Omega)}^2 \geq 
	\frac{1}{2}\,  \big( \, \| \ta\dn \|_{ {H^1_0(\Omega)}}^2 +  \| \tb\dn \|_{ {H^1_0(\Omega)}}^2 \, \big) \\ -   \, \big( \, \| \ba\dn \ta\dn + \bb\dn \tb\dn \|_{L^2(\Omega)}^2 + \|  \bc\dn \ta\dn + \bd\dn \tb\dn  \|_{L^2(\Omega)}^2 \,\big),
\end{multline*}
and so, using the strong convergence in $L^2$:
\[
4 \, B^2 \, \left(\| \ta\dn \|_{L^2(\Omega)}^2 + \| \tb\dn \|_{L^2(\Omega)}^2 \right) \geq \frac{1}{2}-\frac{ 1 }{ n} \; \Rightarrow  \; \| \ta_* \|_{L^2(\Omega)}^2 + \| \tb_* \|_{L^2(\Omega)}^2 \geq \frac{1}{4  \,  B^2},
\]
which contradicts  $\alpha_*=\beta_*=0$.
\end{proof}

With Proposition \ref{prop:coer} at hand, we are now in position to prove existence and uniqueness of a solution to Eqn. \eref{eq:normal}. Given data satisfying Assumptions \ref{hyp:LB}, finding $(\ado,\bdo)\in H^1_0(\Omega)^2$ solving \eref{eq:normal} is equivalent to solving the weak form
\begin{equation}\label{weak:form}
\! \big\langle \bLcd(\ado,\bdo) , \bLcd(\ta,\tb) \big\rangle_{L^2(\Omega)} = \big\langle \bfd-\bLcd (\alpha^\ell,\beta^\ell) , \bLcd(\ta,\tb) \big\rangle_{L^2(\Omega)}, \;  \forall(\ta,\tb)\in H^1_0(\Omega)^2.
\end{equation}
From the Cauchy-Schwarz inequality, \eref{norm:opA} and the definition of the space $L^{\infty}_B(\Omega)$ then there is a constant $C'>0$ such that
\begin{equation}\label{bound:opA}
\big\langle \bLcd(\ado,\bdo) , \bLcd(\ta,\tb) \big\rangle_{L^2(\Omega)} \leq  C' \big(\|Ê\ado \|_{H^1_0(\Omega)}^2 + \|Ê\bdo  \|_{H^1_0(\Omega)}^2\big)^{\frac{1}{2}}\,  \big(\|Ê\ta \|_{H^1_0(\Omega)}^2 + \|Ê\tb  \|_{H^1_0(\Omega)}^2\big)^{\frac{1}{2}}.
\end{equation}
Moreover, Proposition \ref{prop:coer} entails
\begin{equation}\label{coer:opA}
\frac{1}{C}\big( \|\ado\|_{H^1_0(\Omega)}^2 + \|\bdo\|_{H^1_0(\Omega)}^2\big) \leq \|\bLcd(\ado,\bdo)\|_{L^2(\Omega)}^2.
\end{equation}
With the boundedness \eref{bound:opA} and coercivity \eref{coer:opA} now verified, the existence of a unique solution to the variational problem \eref{weak:form} follows directly from the Lax-Milgram theorem which ensures that
\[
\big( \|\ado\|_{H^1_0(\Omega)}^2 + \|\bdo\|_{H^1_0(\Omega)}^2 \big)^{\frac{1}{2}} \leq C \| \bfd-\bLcd (\alpha^\ell,\beta^\ell) \|_{L^2(\Omega)}.
\]


\section{Stability of reconstruction formula with noisy data}\label{sec:stab:noise}

\subsection{Solution stability w.r.t. noisy operator $\bLcd$}

Looking forward to the proposition of a regularization scheme in Section \ref{sec:num:reg} we now derive a stability result in the presence of noise. Considering Assumptions \ref{regularity_alpha} and Eqn. \eref{lifting_notation}, let $(\alpha^o,\beta^o)$ and $(\ado,\bdo)$ denote the solutions in $H^1_0(\Omega)^2$ to Eqn. \eref{eq:normal} that are respectively associated with exact measurements and noisy data satisfying Assumptions \ref{hyp:LB}. Then one has
\[\bLc (\alpha^o,\beta^o) = \bff-\bLc (\alpha^\ell,\beta^\ell)\quad\text{and}\quad\bLcd (\ado,\bdo) = \bfd-\bLcd (\alpha^\ell,\beta^\ell)+\bfhd
\]
where the subscript is omitted in noise-free quantities $(\alpha,\beta)\equiv(\alpha_0,\beta_0)$, $\bLc\equiv\bLc_0$ and $\bff\equiv\bff_0$, see Section \ref{sec:noisy:data}. Accounting for the fact that $\bfd$ no longer belongs to the range of the operator $\bLcd$, the additional term $\bfhd$ satisfies $\bLcsd\bfhd=\bzero.$ Defining the operator
\[
    \bEcd(\ta,\tb) := \left(\mathbf{M}-\mathbf{M}_\delta\right)\left[\!\begin{array}{c} \ta \\ \tb \end{array}\!\right],
\]
then the previous equations entail
\[
\bLcsd\bLcd ( \ado-\alpha^o,\bdo-\beta^o )=\bLcsd\big(\bfd-\bff+\bEcd(\alpha,\beta)\big)
\]
where $\ado-\alpha^o=\bdo-\beta^o=0$ on the boundary $\p\Omega$. Therefore, upon taking the $L^2(\Omega)$-inner product in the above equation with the vector $( \ado-\alpha^o,\bdo-\beta^o )$, one obtains
 \[
 \| \bLcd( \ado-\alpha^o,\bdo-\beta^o)  \|^2_{L^2(\Omega)}  =  \big\langle \bfd-\bff+\bEcd(\alpha,\beta) ,\, \bLcd( \ado-\alpha^o,\bdo-\beta^o) \big\rangle_{L^2(\Omega)},
 \]
so that
\[
 \| \bLcd( \ado-\alpha^o,\bdo-\beta^o)  \|_{L^2(\Omega)}  \leq  \| \bfd-\bff+\bEcd(\alpha,\beta)\|_{L^2(\Omega)}.
 \]
Assumptions \ref{regularity_alpha} together with Proposition \ref{prop:coer} yield the fundamental result of this section:
\begin{proposition}\label{ineq:stab}
For all $\delta\geq0$, if Assumptions \ref{hyp:LB} hold and $\bfd\in L^2(\Omega)^{2d}$ then there exists $C>0$ which does not depend on $\delta$ such that
\[
  \|\ado-\alpha^o\|_{H^1_0(\Omega)}  + \|\bdo-\beta^o\|_{H^1_0(\Omega)}  \leq C \big[ \| \bfd-\bff \|_{L^2(\Omega)} + \| \bEcd \| \,\big].
\]
where $\|\bEcd \|^2:=\sum_{\bv = \ba,\bb,\bc,\bd} \|\bv-\bv_\delta \|_{L^2(\Omega)}^2$.
\end{proposition}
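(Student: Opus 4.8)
The plan is to promote the $L^2$ bound
\[
\| \bLcd( \ado-\alpha^o,\bdo-\beta^o)  \|_{L^2(\Omega)}  \leq  \| \bfd-\bff+\bEcd(\alpha,\beta)\|_{L^2(\Omega)}
\]
already obtained above into an $H^1_0$ estimate on the parameter error. First I would observe that $(\ado-\alpha^o,\bdo-\beta^o)\in H^1_0(\Omega)^2$, both solutions sharing the common lifting $(\alpha^\ell,\beta^\ell)$, so that Proposition \ref{prop:coer} applies verbatim with $(\ta,\tb)=(\ado-\alpha^o,\bdo-\beta^o)$. This supplies a constant $C>0$, crucially independent of $\delta$, controlling $\|\ado-\alpha^o\|_{H^1_0(\Omega)}^2+\|\bdo-\beta^o\|_{H^1_0(\Omega)}^2$ by $C\,\| \bLcd( \ado-\alpha^o,\bdo-\beta^o)\|_{L^2(\Omega)}^2$. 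Passing to the non-squared norms through $a+b\leq\sqrt2\,(a^2+b^2)^{1/2}$ and chaining with the displayed bound reduces everything to estimating $\| \bfd-\bff+\bEcd(\alpha,\beta)\|_{L^2(\Omega)}$.

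Next I would split this term by the triangle inequality into $\| \bfd-\bff\|_{L^2(\Omega)}$, which is already in the desired form, and the operator-perturbation contribution $\|\bEcd(\alpha,\beta)\|_{L^2(\Omega)}$. To handle the latter I would expand
\[
\bEcd(\alpha,\beta) = \left[\!\begin{array}{c} (\ba-\bad)\alpha+(\bb-\bbd)\beta \\ (\bc-\bcd)\alpha+(\bd-\bdd)\beta \end{array}\!\right]
\]
and estimate each component by H\"older's inequality, factoring $\alpha$ and $\beta$ out in $L^\infty(\Omega)$. This is precisely the point at which Assumptions \ref{regularity_alpha} are invoked: the exact parameters are bounded, so that $\|\bEcd(\alpha,\beta)\|_{L^2(\Omega)}\leq\big(\|\alpha\|_{L^\infty(\Omega)}+\|\beta\|_{L^\infty(\Omega)}\big)\,\|\bEcd\|$ with $\|\bEcd\|$ as defined in the statement. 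Absorbing $\sqrt{2C}$ and $\|\alpha\|_{L^\infty(\Omega)}+\|\beta\|_{L^\infty(\Omega)}$ into one constant then delivers the announced inequality.

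The one point demanding care --- and the only place where the result could fail --- is the $\delta$-independence of the final constant. I expect no genuine obstacle here, since the hard uniform estimate has already been carried out in Proposition \ref{prop:coer}: its coercivity constant does not depend on $\delta$, and the multiplicative factor $\|\alpha\|_{L^\infty(\Omega)}+\|\beta\|_{L^\infty(\Omega)}$ involves only the fixed noise-free parameters, hence is likewise $\delta$-independent. Thus the remaining work is purely the bookkeeping above, and the substantive difficulty lies upstream, in the Poincar\'e--Friedrichs inequality of Proposition \ref{prop:coer}.
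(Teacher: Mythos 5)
Your argument is correct and follows the paper's own route: starting from the $L^2$ bound on $\bLcd(\ado-\alpha^o,\bdo-\beta^o)$ derived just before the proposition, you apply the $\delta$-uniform coercivity of Proposition \ref{prop:coer} to the difference (which lies in $H^1_0(\Omega)^2$ thanks to the common lifting), split by the triangle inequality, and control $\|\bEcd(\alpha,\beta)\|_{L^2(\Omega)}$ via the $L^\infty$ bound on $(\alpha,\beta)$ from Assumptions \ref{regularity_alpha}. This is precisely how the paper concludes ("Assumptions \ref{regularity_alpha} together with Proposition \ref{prop:coer} yield the result"), with your write-up merely making the bookkeeping explicit.
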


\subsection{Solution stability w.r.t. noisy displacement measurements}

We now clarify the meaning of \eref{eq:approx} and deduce a stability result for the reconstructed parameters in terms of the approximation quality associated with the noisy data.

\begin{theorem}\label{theo_conv} If assumptions \ref{mega_hyp} and \ref{hyp:LB} hold and there exists an increasing function $\eta(\delta)$  such that, for all $\delta>0$, the functions $  (\buid, \buiid)  $ and $  ( \beps_{1, \delta}, \beps_{2, \delta}, \bHc_{1, \delta}, \bHc_{2, \delta} ) $ satisfy
\begin{equation}\label{rel:estimate_stab}
\sum_{n=1}^2\bigg\{ \|\bu_{n, \delta}  -  \bu_{n}\|_{L^\infty(\Omega)} +  \| \beps_{n, \delta}  - \beps_n\|_{L^\infty(\Omega)} +  \sum_{k=1}^d  \| \bHc_{n, \delta}^k -  \hess (\bu_{n}\cdot\be_k)\|_{L^2(\Omega)} \bigg\}\leq \eta(\delta),		
\end{equation}
then, for $ \delta $ small enough, the solution to \eref{eq:normal} with $(\bLcd, \bfd)$ defined in Section \ref{sec:noisy:data} satisfy
\[
    \|\ado-\alpha^o\|_{H^1_0(\Omega)} + \|\bdo-\beta^o\|_{H^1_0(\Omega)} \leq C \, \eta(\delta),
    \]
where $C$ is a positive scalar which does not depend on $\delta$.
\end{theorem}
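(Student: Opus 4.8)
The plan is to reduce the claim to Proposition \ref{ineq:stab}, which already provides, with a constant $C$ independent of $\delta$,
\[
\|\ado-\alpha^o\|_{H^1_0(\Omega)} + \|\bdo-\beta^o\|_{H^1_0(\Omega)} \leq C\big(\|\bfd-\bff\|_{L^2(\Omega)} + \|\bEcd\|\big),
\]
where $\|\bEcd\|^2 = \sum_{\bv=\ba,\bb,\bc,\bd}\|\bv-\bv_\delta\|_{L^2(\Omega)}^2$. It therefore suffices to establish that the maps sending the (noisy) data to the coefficients and the source are Lipschitz near the exact data, i.e. that $\|\bfd-\bff\|_{L^2(\Omega)} + \|\bEcd\| \leq C\eta(\delta)$. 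The guiding observation is that the construction $\mathbf{M}=\mathbf{A}^{-1}\mathbf{B}$ and $\bff=-\mathbf{A}^{-1}[\rho\omega_1^2\bu_1;\rho\omega_2^2\bu_2]$ of \eref{def:M:f} mixes two regularity levels: the factor $\mathbf{A}^{-1}$ and the vectors $\bu_n$ depend only on the strains and displacements, which \eref{rel:estimate_stab} controls in $L^\infty(\Omega)$, whereas $\mathbf{B}$ depends on second derivatives of $\bu_n$, controlled by \eref{rel:estimate_stab} only in $L^2(\Omega)$. Pairing an $L^\infty$ factor with an $L^2$ factor is precisely what renders the weaker $L^2$ control of the hessians sufficient.

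First I would control $\mathbf{A}^{-1}$. Since $t_{n,\delta}-t_n=\operatorname{tr}(\beps_{n,\delta}-\beps_n)$ and $\beps_{n,\delta}\dev-\beps_n\dev$ are bounded in $L^\infty(\Omega)$ by $C\eta(\delta)$, the tensor $\Escr_\delta=t_{1,\delta}\beps_{2,\delta}\dev-t_{2,\delta}\beps_{1,\delta}\dev$ obeys $\|\Escr_\delta-\Escr\|_{L^\infty(\Omega)}\leq C\eta(\delta)$, so $\det\Escr_\delta\to\det\Escr$ uniformly. Together with $\inf_\Omega|\det\Escr|\geq c_0$ from Assumptions \ref{mega_hyp}, this yields $\inf_\Omega|\det\Escr_\delta|\geq c_0/2$ for $\delta$ small enough; the adjugate formula $\Escr_\delta^{-1}=\operatorname{adj}(\Escr_\delta)/\det\Escr_\delta$ then delivers both a uniform bound $\|\Escr_\delta^{-1}\|_{L^\infty(\Omega)}\leq C$ and the Lipschitz estimate $\|\Escr_\delta^{-1}-\Escr^{-1}\|_{L^\infty(\Omega)}\leq C\eta(\delta)$. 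Inserting these into the explicit expression of Lemma \ref{Ainv}, whose remaining entries $\beps_n\dev$ and $t_n\bI$ are likewise $L^\infty$-close, I obtain $\|\mathbf{A}_\delta^{-1}\|_{L^\infty(\Omega)}\leq C$ and $\|\mathbf{A}^{-1}-\mathbf{A}_\delta^{-1}\|_{L^\infty(\Omega)}\leq C\eta(\delta)$.

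Next I would bound the two terms on the right of Proposition \ref{ineq:stab}. For the coefficients I split
\[
\mathbf{M}-\mathbf{M}_\delta = (\mathbf{A}^{-1}-\mathbf{A}_\delta^{-1})\mathbf{B} + \mathbf{A}_\delta^{-1}(\mathbf{B}-\mathbf{B}_\delta).
\]
Under Assumptions \ref{mega_hyp}, $\bu_n\in H^2(\Omega)^d$, so the entries of $\mathbf{B}$ (combinations of $\nab t_n$ and $\nab\cdot\beps_n\dev$, i.e. second derivatives of $\bu_n$) lie in $L^2(\Omega)$; hence the first term is bounded in $L^2$ by $\|\mathbf{A}^{-1}-\mathbf{A}_\delta^{-1}\|_{L^\infty}\|\mathbf{B}\|_{L^2}\leq C\eta(\delta)$. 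Because the substitutions defining $\mathbf{B}_\delta$ in Section \ref{sec:noisy:data} are linear in the hessian entries, $\|\mathbf{B}-\mathbf{B}_\delta\|_{L^2}$ is controlled exactly by $\sum_{n,k}\|\bHc_{n,\delta}^k-\hess(\bu_n\cdot\be_k)\|_{L^2}\leq\eta(\delta)$, so the second term is at most $\|\mathbf{A}_\delta^{-1}\|_{L^\infty}\|\mathbf{B}-\mathbf{B}_\delta\|_{L^2}\leq C\eta(\delta)$; this gives $\|\bEcd\|\leq C\eta(\delta)$. An identical $L^\infty$--$L^2$ splitting of $\bff-\bfd$ into $-(\mathbf{A}^{-1}-\mathbf{A}_\delta^{-1})[\rho\omega_1^2\bu_1;\rho\omega_2^2\bu_2]$ and $-\mathbf{A}_\delta^{-1}[\rho\omega_1^2(\bu_1-\buid);\rho\omega_2^2(\bu_2-\buiid)]$, combined with $\|\bu_n-\bu_{n,\delta}\|_{L^2}\leq|\Omega|^{1/2}\|\bu_n-\bu_{n,\delta}\|_{L^\infty}\leq C\eta(\delta)$ and $\bu_n\in L^\infty(\Omega)\subset L^2(\Omega)$, yields $\|\bff-\bfd\|_{L^2}\leq C\eta(\delta)$. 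Feeding both bounds into Proposition \ref{ineq:stab} proves the theorem.

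The main obstacle is the first step: upgrading the $L^\infty$-closeness of $\Escr_\delta$ to a \emph{uniform} (in $\delta$) Lipschitz estimate for its inverse. This hinges on the lower bound $\inf_\Omega|\det\Escr_\delta|\geq c_0/2$, which holds only once $\delta$ is small, and is the precise reason for the ``$\delta$ small enough'' hypothesis. Everything else reduces to routine product estimates, the one conceptual point being the H\"older pairing that converts the $L^2$ control of the hessians into an $L^2$ bound on the coefficient differences without ever demanding $L^\infty$ control of the second derivatives.
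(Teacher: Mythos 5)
Your proposal is correct and follows exactly the route the paper intends: the paper's own proof is a one-line reduction (``standard estimates'' plus a direct application of Proposition~\ref{ineq:stab}), and your argument simply supplies those standard estimates --- the uniform invertibility of $\Escr_\delta$ for small $\delta$, and the $L^\infty$--$L^2$ pairing that bounds $\|\bfd-\bff\|_{L^2(\Omega)}+\|\bEcd\|$ by $C\,\eta(\delta)$, which is precisely the pairing the paper highlights in the remark following the theorem.
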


\begin{proof} 
	The proof uses standard estimates as well as Assumptions \ref{mega_hyp}. A direct application of Proposition \ref{ineq:stab} is used to conclude. 
\end{proof}

\begin{remark}
	As the proof of Theorem \ref{theo_conv} relies on bounding the term $\|\bEcd \|$, it requires to estimate either the strain tensor $\beps_{n, \delta}$ in the $L^\infty$-norm and the hessian tensor $\bHc_{n, \delta}^k$ in the $L^2$-norm, or conversely. The choice adopted in \eref{rel:estimate_stab} is actually the more practical. 
\end{remark}

\begin{remark}
Theorem \ref{theo_conv} is consistent with the derivation of Eqn. \eref{eq:normal} in that no error has been considered in the knowledge of the boundary values of $ \alpha$ and $\beta $ on $\partial \Omega$.
\end{remark}

\section{Numerical differentiation scheme }\label{sec:num:reg}

We now address the question of  constructing explicitly the quantities $\{ \beps_{1, \delta}, \beps_{2, \delta}\}$ and $\{\bHc_{1, \delta}, \bHc_{2, \delta} \}$ from noisy measurements ${\buid, \buiid}$. In \cite{Kohn:1988} the discrepancy between noisy and exact measurements is assumed to be small in $H^1$-norm. Yet, in practice the functions $  \bu_{n,\delta} $ may not have bounded gradients or second-order derivatives. Therefore, we aim at defining a differentiation operator
\[
 \bDc :   (\buid, \buiid) \; \in \; \Big[ L^{\infty}(\Omega)^d\Big]^2 \rightarrow (\beps_{1, \delta}, \beps_{2, \delta}, \bHc_{1, \delta}, \bHc_{2, \delta}) \; \in \; \Big[L^{\infty}(\Omega)^{d\times d} \Big]^2 \times \Big[L^2(\Omega)^{d\times d\times d} \Big]^2   ,
\]
such that, for $ \delta $ small enough, Eqn. \eref{rel:estimate_stab} holds with a function $\eta(\delta)$ to be determined. 
Therefore, we are required to regularize the measurements to define the operator $\bDc$. While various methods have been discussed in the literature, see e.g. \cite{Engl}, we discuss hereinafter a regularization by $L^2$-projections onto coarse finite element spaces and show in which sense this approach is optimal.

\subsection{Approach overview} 

Consider a partition of the domain $\Omega \subset \mathbb{R}^d$ using a number of finite elements $K_e$ that are characterized by a mesh size $h$ with $0<h<1$ and such that
\begin{equation}\label{eq:hypo_maillage}
\overline{\Omega} = \bigcup_e \overline{K_e}\quad\text{with}\quad  K_e \cap K_{e'} = \emptyset \quad \text{if } e \neq e' \quad \text{and}\quad \int_{K_e} 1 \, \td \bx \leq C h^d,
\end{equation}
with $C$  independent of $h$. This partition is associated with the finite element spaces
\[
L_{h}^r = \{ v_h \in L^2(\Omega) \quad | \quad (v_h)|_{K_e} \in  Q^r_e  \quad \forall \; K_e \subset \Omega \; \}
\]
with $Q^r_e$ a local finite element space on $K_e$ that includes at least the polynomials of order $r$. Let $V_{h}\subset H^1_0$ denote a finite element space approaching $H^1_0(\Omega)$ in the limit $h\to0$.\\

The approach considered is as follows: Noisy displacement fields measurements $\bu_{n,\delta}$ are assumed to be available on a fine grid. They are interpreted as functions whose components belong to a low-order finite element space, such as $L^0_{h_0}$, on a fine mesh. Note that this choice is arbitrary and $L^1_{h_0}$ or a standard conforming finite element space can be used for interpolating displacement data. Next, we define regularization and differentiation schemes to construct the strain and hessian tensors $\beps_{n,\delta}$, $\bHc_{n,\delta}$ of these quantities. They are associated to coarser meshes but higher order finite elements, i.e. to functional spaces $L^r_{h_1}$ and $L^r_{h_2}$ respectively, with $h_0\ll h_1\leq h_2$. Reconstruction of moduli $(\alpha^o_\delta,\beta^o_\delta)$ is finally achieved upon solving \eref{eq:normal} in $V_{h_0}$ on the fine discretization.

\begin{table}[h]
\caption{Summary of the approach where $r\geq2$ and $h_2\geq h_1\gg h_0$.}
\begin{center}\vspace{-4mm}
\begin{tabular}{c|c c c c}
& Data & Gradient & Hessian & Reconstruction \\
\hline\hline \\[-3mm]
 Quantities & $\bu_{n,\delta}$ & $\beps_{n,\delta}$ & $\bHc_{n,\delta}$ & $(\alpha^o_\delta,\beta^o_\delta)$  \\[1mm] 
Fct. spaces & $L^0_{h_0}$ & $L^r_{h_1}$ & $L^r_{h_2}$ & $V_{h_0}$\\
\end{tabular}
\end{center}\label{Table_space}
\end{table}%

\noindent Note that in the ensuing analysis, we do not account for the numerical errors associated with (i) the numerical quadratures required for the computation of $\beps_{n,\delta}$ and $\bHc_{n,\delta}$, and (ii) the computation of $(\alpha^o_\delta,\beta^o_\delta)$ using the discretized version of the variational formulation discussed above.

\subsection{The $L^2$-projection operator}

Let $\bPc_h^r$ denote the scalar orthogonal projection operator on the finite element space of order $r$ considered
\[
\bPc_h^r :  u \in  L^2(\Omega) \longrightarrow  u_{h,r} \in L_{h}^r
\]
where $ u_{h,r} $ is defined as the unique function of $L_{h}^r$ such that
\[
 ( u_{h,r} , v_{h,r})_{L^2(\Omega)} = ( u, v_{h,r})_{L^2(\Omega)} \quad \forall \; v_{h,r} \in L_{h}^r.
\]
As the spaces $L_{h}^r$ are not conforming this equation is equivalent to have, for all $K_e \subset \Omega$, 
\begin{equation}\label{eq:projection_local}
( u_{h,r} , v_{h,r})_{L^2(K_e)} = ( u, v_{h,r})_{L^2(K_e)} \quad \forall \; v_{h,r} \in Q^r_e,	
\end{equation}
which defines the local orthogonal projection operator $ \bPc^r_{h,e}$ that is associated with element $ K_e $ and satisfies  $ \bPc^r_{h,e}(u{|_{K_e}}) = u_{h,r}{|_{K_e}} $. Standard projection properties are satisfied locally and globally, i.e.
\[\begin{aligned}
    \bPc^r_{h,e}({u_{h,r}}{|_{K_e}}) = {u_{h,r}}{|_{K_e}} &\quad\text{and} \quad  \bPc_{h}^r(u_{h,r}) = u_{h,r} \quad \forall \; u_{h,r} \in L_{h}^r \\
    \| \bPc^r_{h,e}(u{|_{K_e}}) \|_{L^2(K_e)} \leq C \, \| u \|_{L^2(K_e)} &\quad\text{and} \quad \| \bPc_{h}^r(u) \|_{L^2(\Omega)} \leq C \,  \| u \|_{L^2(\Omega)}  \quad \forall u \in L^2(\Omega).    
\end{aligned}\]
We now state an approximation result, which can be proven using standard results of the theory of finite element method \cite{Brenner:2002hf,Heimsund:2003va}. This result quantifies how the local $L^2$-projection onto high-order finite element spaces yields a smooth local approximation of a given function $u$ from a $L^2$ approximation of this function. In these inequalities, $ r $ denote the order of the featured finite element space while the positive integers $\ell$ and $ m $ stand for the Sobolev indices respectively associated with the {\it a priori} regularity of the function $u$ considered and the required regularity of the approximation.
\begin{proposition}\label{prop:reg}
For any $u \in  H^{\ell}(K_e) $, $  v \in  L^{2}(K_e) $ and element $K_e \subset \Omega $ the following inequalities hold 
\begin{itemize}
    \item[--]
	If $ m < \ell\leq r+1$, we have
	\begin{equation}\label{eq:ineq_1}
	    \|  u -  \; \bPc^r_{h,e} ( v) \|_{H^m(K_e)}  \leq   C \, h^{-m} \|u - v \|_{L^2(K_e)} + C \, h^{\ell-m} \, | u |_{H^{\ell}(K_e)}.
	\end{equation}
    \item[--]
	If  $ m+d/2 < \ell\leq r+1$ then
	\begin{equation}\label{eq:ineq_2}
	    \|  \; u -  \; \bPc^r_{h,e} ( v) \|_{W^{m,\infty}(K_e)} \leq    C \, h^{-m-d/2} \, \|u - v \|_{L^2(K_e)} + C \, h^{\ell-m-d/2} \, | u |_{H^{\ell}(K_e)}.
	\end{equation}
\end{itemize}
\end{proposition}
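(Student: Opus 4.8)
The plan is to exploit the linearity of the projection and split the error into an \emph{approximation} part and a \emph{stability} part, treating the two by entirely different tools. Writing
\[
u - \bPc^r_{h,e}(v) = \big(u - \bPc^r_{h,e}(u)\big) + \bPc^r_{h,e}(u-v),
\]
I would bound the first term by classical finite element approximation theory and the second by an inverse inequality combined with the $L^2$-stability of $\bPc^r_{h,e}$ already recorded above. This decomposition is the crucial observation: it isolates the genuinely regularizing behaviour of the projection, through the first term which sees the smoothness of $u$, from its sensitivity to the data discrepancy $u-v$, captured by the second term which can only be controlled in $L^2$.

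First I would estimate $u - \bPc^r_{h,e}(u)$. Since $\bPc^r_{h,e}$ is the $L^2$-projection onto $Q^r_e$, which contains all polynomials of degree at most $r \geq \ell-1$, it reproduces polynomials of degree $\ell-1$. Passing to a fixed reference element $\hat K$ via the affine map of size $h$ (under which the $L^2$-projection commutes with the pullback), the Bramble--Hilbert lemma gives $\|\hat u - \hat\Pi\hat u\|_{H^m(\hat K)} \leq C\,|\hat u|_{H^\ell(\hat K)}$ for $m\leq\ell\leq r+1$, and together with the Sobolev embedding $H^\ell(\hat K)\hookrightarrow W^{m,\infty}(\hat K)$ — valid precisely when $m+d/2<\ell$ — the sharper $\|\hat u - \hat\Pi\hat u\|_{W^{m,\infty}(\hat K)} \leq C\,|\hat u|_{H^\ell(\hat K)}$. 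Scaling back, using that the $H^\ell$-seminorm carries a factor $h^{\ell-d/2}$, the $H^m$-norm a factor $h^{d/2-m}$ and the $W^{m,\infty}$-norm a factor $h^{-m}$ (the lower-order derivative terms scaling with less singular powers of $h$ and hence dominated since $h<1$), yields the two \emph{first} right-hand side terms $C\,h^{\ell-m}|u|_{H^\ell(K_e)}$ and $C\,h^{\ell-m-d/2}|u|_{H^\ell(K_e)}$ respectively.

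Then I would estimate $\bPc^r_{h,e}(u-v)$, which lies in the finite-dimensional space $Q^r_e$. On such polynomial spaces an inverse inequality holds: scaling to $\hat K$ and using the equivalence of all norms on the fixed space $\hat Q^r$ gives $\|w_h\|_{H^m(K_e)} \leq C\,h^{-m}\|w_h\|_{L^2(K_e)}$ and $\|w_h\|_{W^{m,\infty}(K_e)} \leq C\,h^{-m-d/2}\|w_h\|_{L^2(K_e)}$, the extra factor $h^{-d/2}$ arising from the $L^2\hookrightarrow L^\infty$ inverse estimate under scaling. Applying this with $w_h = \bPc^r_{h,e}(u-v)$ and invoking the $L^2$-stability $\|\bPc^r_{h,e}(u-v)\|_{L^2(K_e)} \leq C\|u-v\|_{L^2(K_e)}$ produces the two \emph{second} terms $C\,h^{-m}\|u-v\|_{L^2(K_e)}$ and $C\,h^{-m-d/2}\|u-v\|_{L^2(K_e)}$. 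Summing the two contributions by the triangle inequality delivers \eref{eq:ineq_1} and \eref{eq:ineq_2}.

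The bookkeeping of powers of $h$ under the affine scaling is routine; the only genuine subtlety — and the step I expect to be the main obstacle — is ensuring that every constant is uniform in $e$ and $h$. This requires the local spaces $Q^r_e$ to be affine-equivalent to a single reference space $\hat Q^r$ on a shape-regular family of elements, so that the Bramble--Hilbert constant, the norm-equivalence constant behind the inverse inequality, and the Sobolev embedding constant are all inherited from $\hat K$ independently of the element. Finally, the hypothesis $\ell\leq r+1$ is exactly what makes the polynomial reproduction, hence Bramble--Hilbert, available, while the strict inequalities $m<\ell$ and $m+d/2<\ell$ are dictated respectively by positivity of the approximation order and by the Sobolev embedding into $W^{m,\infty}$.
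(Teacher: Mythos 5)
Your proof is correct, and it is exactly the standard argument the paper has in mind: the paper omits the proof entirely, stating only that the result ``can be proven using standard results of the theory of finite element method'' with references to Brenner--Scott and Heimsund et al. Your decomposition $u-\bPc^r_{h,e}(v)=(u-\bPc^r_{h,e}(u))+\bPc^r_{h,e}(u-v)$, with Bramble--Hilbert plus scaling for the first term and an inverse inequality plus $L^2$-stability for the second, is precisely that standard route, and your closing remark on shape-regularity correctly identifies the hypothesis needed for constants uniform in $e$ and $h$.
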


\noindent The interpretation of Proposition \ref{prop:reg} is as follows: Within the framework of finite element spaces, one can approximate locally in element $K_e$, the derivatives up to order $m$ of a given function $u \in  H^{\ell}(K_e) $ using a function $v \in  L^{2}(K_e) $ that approximates $u$ in the $L^2$-norm only. Yet, the approximation quality is penalized by the detrimental term $h^{-m}$, with $h$ being the mesh size. Therefore, to obtain an optimal approximation in the sense of Theorem \ref{theo_conv}, the parameter $h$ has to be chosen in such a way that, roughly speaking, $\delta h^{-m}\ll1$. This issue is discussed in the next section.

\subsection{Construction of noisy operator ${\bLcd}$ by regularization}\label{sec_reg}

Consider noisy displacement measurements $\buid, \buiid \in L^\infty(\Omega)^d$ satisfying for all $\delta > 0$
\begin{equation}\label{eq:measure}
	\|\bui -  \buid \|_{L^\infty(\Omega)} + \|\buii -  \buiid \|_{L^\infty(\Omega)} \leq C \, \delta.
\end{equation}
The differentiation operator $\bDc$ is now given by
\begin{equation}\label{def:op:diff}
 \bDc(\buid, \buiid) =  (\beps_{1, \delta}, \beps_{2, \delta}, \bHc_{1, \delta}, \bHc_{2, \delta})
\end{equation}
where the strain and hessian tensors are defined element-wise in all elements $K_e \subset \Omega$ by
\[
\beps_{n, \delta}{|_{K_e}} = \nabs \bigg(\sum_{k=1}^d\bPc_{h_1,e}^r (\bu_{n,\delta}{|_{K_e}}\cdot\be_k)\be_k \bigg), \quad  \bHc_{n, \delta}{|_{K_e}} =  \sum_{k=1}^d\hess\big(  \bPc_{h_2,e}^r ( \bu_{n,\delta} {|_{K_e}}\cdot\be_k) \big)\otimes\be_k.
\]
For the sake of generality, two different meshes of sizes $ h_1 $ and $ h_2 $ are used to project the data $(\buid, \buiid)$ to compute first and second-order derivatives respectively.

For simplicity, we assume that the function $\eta(\delta)$ in \eref{rel:estimate_stab} is given by a power law. Now, the aim is to tune the parameters $ h_1 $ and $ h_2 $ as functions of $ \delta $ to obtain the best possible estimate in \eref{rel:estimate_stab}. For the sake of argument, let $\ell \geq \ell_0 \geq 2$ such that $\bu_{n},  \bu_{n,\delta} \; \in \; H^{\ell}(\Omega) \cap W^{\ell-1,\infty}(\Omega)$ for all $\delta>0$ and $n=1,2$. In this case one can show using classic interpolation results, see \cite[Theorem 4.17]{Adams:1975}, that a direct differentiation of the measurements $\buid, \buiid $ yields the convergence rate $\eta(\delta) = \delta^{\frac{\ell-2}{\ell}}$. Therefore this is the optimal bound that constrains the final estimate on the reconstruction.

\begin{theorem}\label{th:final} Let Assumptions \ref{mega_hyp} be satisfied and assume that the solutions to \eref{eq:sol_general} satisfy $\bu_1, \bu_2 \in H^\ell(\Omega)^d$ where $\ell$ is an integer such that $ 3 \leq \ell \leq r+1$, while the noisy measurements satisfy \eref{eq:measure}. For $\delta$ sufficiently small, when choosing 
	\[
	h_1 =  \delta^{\frac{1}{\ell-d/2}} \quad \mbox{ and } \quad h_2 = \delta^{\frac{1}{\ell}}
	\]	
and constructing the operator $\bLcd$ using $  \bDc $ defined by \eref{def:op:diff}, then the reconstructed parameters $ (\ado, \bdo) $ obtained solving \eref{eq:normal} satisfy
	\begin{equation}\label{eq:bound}
		\|\ado-\alpha^o\|_{H^1_0(\Omega)} + \|\bdo-\beta^o\|_{H^1_0(\Omega)} \leq C \, \delta^{\frac{\ell-2}{\ell}},
	\end{equation}
		where the constant $C>0$ does not depend on $\delta$.
\end{theorem}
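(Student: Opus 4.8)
The plan is to chain together the stability result of Theorem \ref{theo_conv} with the approximation estimates of Proposition \ref{prop:reg}, tuning the mesh sizes $h_1, h_2$ so that the composite error bound $\eta(\delta)$ achieves the optimal rate $\delta^{(\ell-2)/\ell}$. By Theorem \ref{theo_conv} it suffices to bound the three contributions appearing in \eref{rel:estimate_stab}, namely the displacement term $\|\bu_{n,\delta}-\bu_n\|_{L^\infty}$, the strain term $\|\beps_{n,\delta}-\beps_n\|_{L^\infty}$, and the hessian term $\sum_k\|\bHc_{n,\delta}^k-\hess(\bu_n\cdot\be_k)\|_{L^2}$, each by a quantity of order $\delta^{(\ell-2)/\ell}$; the conclusion \eref{eq:bound} then follows immediately from the linear dependence on $\eta(\delta)$ in Theorem \ref{theo_conv}.

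\textbf{First} I would dispose of the displacement term, which requires no differentiation: by \eref{eq:measure} one has $\|\bu_{n,\delta}-\bu_n\|_{L^\infty}\leq C\delta$, and since $\ell\geq 3$ gives $(\ell-2)/\ell<1$, this is dominated by $\delta^{(\ell-2)/\ell}$ for $\delta$ small. \textbf{Next}, for the strain term I would apply the $W^{m,\infty}$ estimate \eref{eq:ineq_2} with $m=1$ (since $\beps_{n,\delta}$ involves one derivative of the $L^2$-projection $\bPc^r_{h_1,e}$) on each element $K_e$, using $v=\bu_{n,\delta}\cdot\be_k$, $u=\bu_n\cdot\be_k$. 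This yields locally
\[
\|\beps_{n,\delta}-\beps_n\|_{W^{0,\infty}(K_e)}\leq C\,h_1^{-1-d/2}\|\bu_{n,\delta}-\bu_n\|_{L^2(K_e)}+C\,h_1^{\ell-1-d/2}|\bu_n|_{H^\ell(K_e)},
\]
where I use that the projection error in the first term is controlled by $\|\bu_{n,\delta}-\bu_n\|_{L^\infty}\leq C\delta$ via the measure estimate \eref{eq:hypo_maillage}. With the choice $h_1=\delta^{1/(\ell-d/2)}$ the two exponents balance: $h_1^{-1-d/2}\delta=\delta^{1-(1+d/2)/(\ell-d/2)}=\delta^{(\ell-2)/(\ell-d/2)}$ and $h_1^{\ell-1-d/2}=\delta^{(\ell-1-d/2)/(\ell-d/2)}$, and one checks both exponents exceed $(\ell-2)/\ell$, so the strain term is controlled. \textbf{Then} the hessian term uses the $H^m$ estimate \eref{eq:ineq_1} with $m=2$ on the space $L^r_{h_2}$; summing the squared local bounds over elements and using $h_2=\delta^{1/\ell}$ gives $h_2^{-2}\delta=\delta^{1-2/\ell}=\delta^{(\ell-2)/\ell}$ for the projection-error part and $h_2^{\ell-2}=\delta^{(\ell-2)/\ell}$ for the interpolation part, both yielding exactly the target rate. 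It is this hessian contribution, measured only in $L^2$, that is the binding constraint and dictates the overall exponent $(\ell-2)/\ell$.

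\textbf{The main obstacle} I anticipate is the bookkeeping in assembling the element-wise estimates from Proposition \ref{prop:reg} into global norms while verifying that the mesh-size choices simultaneously control all three terms at the common rate. In particular, one must confirm that the hypotheses of \eref{eq:ineq_1}--\eref{eq:ineq_2} are met, namely $m<\ell\leq r+1$ for the hessian ($m=2$, requiring $\ell\geq 3$) and $m+d/2<\ell\leq r+1$ for the strain ($m=1$), both of which follow from the standing hypothesis $3\leq\ell\leq r+1$ and $d\leq 3$. One must also check that the regularized coefficients $\bad,\bbd,\bcd,\bdd$ satisfy the uniform bound Assumptions \ref{hyp:LB} so that Theorem \ref{theo_conv} applies; this follows from the $W^{1,\infty}$-control of $\beps_{n,\delta}$ together with the nonvanishing-determinant hypothesis of Assumptions \ref{mega_hyp}, which guarantees $\Escr_\delta$ remains uniformly invertible for $\delta$ small. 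Once these verifications are in place, the estimate \eref{rel:estimate_stab} holds with $\eta(\delta)=C\delta^{(\ell-2)/\ell}$ and Theorem \ref{theo_conv} closes the argument.
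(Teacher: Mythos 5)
Your proposal follows essentially the same route as the paper's proof: the same local estimates \eref{eq:ineq_2} (with $m=1$) for the strain and \eref{eq:ineq_1} (with $m=2$) for the hessian, assembled element-wise into global norms, the same mesh-size choices, and the same appeal to Theorem \ref{theo_conv} to conclude. One arithmetic slip to repair in the strain term: the quantity to evaluate is $h_1^{-1-d/2}\|\bu_{n,\delta}-\bu_n\|_{L^2(K_e)}\leq C\,h_1^{-1}\delta$ (the gain of $h_1^{d/2}$ coming from the measure bound in \eref{eq:hypo_maillage}, exactly as you state in words), which gives the exponent $(\ell-1-d/2)/(\ell-d/2)$; this does balance the second contribution and is $\geq(\ell-2)/\ell$ since $\ell\geq d$, whereas the displayed $h_1^{-1-d/2}\delta$ has exponent $(\ell-1-d)/(\ell-d/2)$, which is not the $(\ell-2)/(\ell-d/2)$ you wrote and would fall below the target rate $(\ell-2)/\ell$ for $d\geq2$.
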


\begin{proof}
	From Definition \eref{def:hess} and Property \eref{eq:ineq_1} we have
	\[\begin{aligned}
 \|\bHc^k_{n, \delta} \,- \, \hess (\bu_{n}\cdot\be_k)\|_{L^2(K_e)} & \leq C \| \bPc_{h_2,e}^r (\bu_{n,\delta}) - \bu_{n}\|_{H^2(K_e)} \\[4pt]
	& \leq  C \, {h_2}^{-2} \|\bu_{n} - \bu_{n,\delta}  \|_{L^2(K_e)} + C \, {h_2}^{\ell-2} \, | \bu_{n} |_{H^{\ell}(K_e)} . 
	\end{aligned}\]
	Squaring the above, summing over elements $K_e$ and taking the square root, entail
	\[
 \|\bHc^k_{n, \delta} \,- \, \hess (\bu_{n}\cdot\be_k)\|_{L^2(\Omega)} \leq C \, {h_2}^{-2} \|\bu_{n} - \bu_{n,\delta}  \|_{L^2(\Omega)} + C \, {h_2}^{\ell-2} \, \big( \sum_{K_e} | \bu_n |_{H^{\ell}(K_e)}^2 \big)^{1/2} .
	\]
	As $\bu_1, \bu_2 \in H^\ell(\Omega)^d$, using Eqn. \eref{eq:measure} and the boundedness of $ \Omega $ finally yield
	\begin{equation}\label{eq:first_part_theo_conv}
		\|\bHc^k_{n, \delta} \,- \, \hess (\bu_{n}\cdot\be_k)\|_{L^2(\Omega)} \leq C \, \bigg(\frac{\delta}{{h_2}^{2}} + {h_2}^{\ell-2}  \bigg).
		\end{equation}
	Moreover, owing to Property \eref{eq:ineq_2}, one has
	\[\begin{aligned}
		\|\beps_{n, \delta} \, - \,\beps_n \|_{L^\infty(\Omega)}  & \leq  \sup_{K_e} \|\bPc_{h_1}^r (\bu_{n,\delta}) - \bu_{n}\|_{W^{1,\infty}(K_e)} \\ & \leq C \sup_{K_e} \big(  \, h_1^{-1-d/2} \, \|\bu_{n} - \bu_{n,\delta} \|_{L^2(K_e)}  +  \, h_1^{\ell-1-d/2} \,  | \bu_{n} |_{H^{\ell}(K_e)}  \big).
	\end{aligned}\]
	Using Eqn. \eref{eq:measure}, the first term can be bounded uniformly with respect to $K_e \subset \Omega$ as
	\[
	 h_1^{-1-d/2} \, \|\bu_{n} - \bu_{n,\delta}\|_{L^2(K_e)} \leq C h_1^{-1} \, \|\bu_{n} - \bu_{n,\delta}\|_{L^\infty(K_e)} \leq C \, h_1^{-1} \delta,
		\]
	since the mesh is assumed to be uniform for all $h_1$ owing to \eref{eq:hypo_maillage}. The above result and the inequality $ \sup_{K_e}  | \bu_{n}  |_{H^{\ell}(K_e)}  \leq C | \bu_{n} |_{H^{\ell}(\Omega)}  $ yield 
	\begin{equation}\label{eq:second_part_theo_conv}
\|\beps_{n, \delta} \, - \,\beps_n \|_{L^\infty(\Omega)}  \leq C \bigg( \frac{\delta}{ h_1} + h_1^{\ell-1-d/2} \bigg).
	\end{equation}
	Combining \eref{eq:first_part_theo_conv} and \eref{eq:second_part_theo_conv} finally entails
	\[
\sum_{n=1}^2\!\bigg\{\! \|\beps_{n, \delta}  - \beps_n \|_{L^\infty(\Omega)} +  \sum_{k=1}^d  \|\bHc^k_{n, \delta} -  \hess (\bu_{n}\cdot\be_k)\|_{L^2(\Omega)} \!\bigg\} \leq C  \bigg( \frac{\delta}{h_1} \,+\, h_1^{\ell-1-d/2} \,+\, \frac{\delta}{h_2^{2}} \,+\, h_2^{\ell-2}  \bigg).
	\]
 	To conclude the proof, let $h_1 = \delta^{\mathrm{r}_1}$ and $h_2 = \delta^{\mathrm{r}_2}$ with the positive real parameters $\mathrm{r}_1$ and $\mathrm{r}_2$ chosen to maximize the quantities 
$
	 \min(\mathrm{r}_1(\ell-1-d/2),1-\mathrm{r}_1) $ and $ \min(\mathrm{r}_2(\ell-2),1-2\mathrm{r}_2) .
$
	Since $\ell \geq 3$, one can show that these maxima are achieved when $\mathrm{r}_1 = 1/(\ell-d/2)$ and $\mathrm{r}_2=1/\ell$ . Then Theorem \ref{theo_conv} is used to finally obtain Estimate \eref{eq:bound}.
\end{proof}

\begin{remark}
	When $ d=2 $, the choice $h_1 = h_2 = \delta^{1/\ell}$ is actually sufficient to obtain \eref{eq:bound}. However, in this case the featured constant $C$ would be suboptimal compared to this obtained using Theorem \ref{th:final}.
\end{remark}

\section{Numerical results}\label{sec:num:res}

A set of numerical results is presented in this section to assess the performances of the proposed approach. These numerical examples correspond to solving Eqn. \eref{eq:normal} in dimension $d=2$ with the noisy operators constructed using the method presented in Section \ref{sec_reg} with $h_1=h_2\equiv h$. Given a distribution of smooth enough constitutive parameters $(\alpha, \beta)$, the solutions $(\bu_1,\bu_2)$ are computed on a sufficiently fine mesh from Eqn. \eref{REFequ} augmented with non-homogeneous Dirichlet boundary conditions
\begin{equation}
 \bu_n=\bg_n\; \;  \text{ on } \partial \Omega, \qquad n=1,2.
\label{REFequBC}
\end{equation}
These solutions are then polluted by a noise, which, for the sake of reproducibility, is parameterized and therefore deterministic, as an increasing function in $L^\infty(\Omega)$-norm of a parameter $\delta$. In what follows, we assess the behavior of the proposed numerical algorithm with respect to the parameters $h$ and $\delta$. Let $\Omega = [0,1]^2$ and the parameters $\alpha(\bx)$ and $\beta(\bx)$ by defined as 
\begin{equation}\label{eq:formulae_alpha_beta}
	\alpha(\bx) = 	\alpha_0 + \sum_{i=1}^{N_\alpha} 	\alpha_i \, c( |\bx -\,\bx_i^\alpha| \, ; \, a_i^{-}, a_i^{+}), \quad 	\beta(\bx) = \beta_0 + \sum_{i=1}^{N_\beta} \beta_i \, c( |\bx - \bx_i^\beta|  \, ; \, b_i^{-}, b_i^{+}), 
\end{equation}
where the function $ c(r \, ; \, r^-,  r^+) $ equals $ 1 $ if $ r  < r^-$, $ 0 $ if  $ r > r^+ $ and
\[
	c(r \, ; \, r^- , r^+) = \!\big( 1 - {(r-r^-)}/{(r^+ - r^-)} \big)^{\!2} \, \big(1+2 {(r-r^-)}/{(r^+ - r^-)} \big) \quad  r^- \leq r \leq  r^+.
\]
The radial function $ c(|\bx| \, ; \, r^-,  r^+) $ is $ C^1(\Omega) $ while all its second order derivatives are $ L^\infty(\Omega) $. This property guarantees that the solutions $\bu_1$, $\bu_n$ to \eref{REFequ} and \eref{REFequBC} are at least in $H^3(\Omega)$ for smooth enough boundary conditions $\bg_n$. In turn, it ensures that Assumptions \ref{mega_hyp} are satisfied owing to the injection of $H^3(\Omega)$ into $W^{1,\infty}(\Omega)$. Therefore, the algorithm can at least be defined at the continuous level and Theorem \ref{th:final} grants us theoretical convergence of the algorithm as $\delta\to0$.

Numerical computations are performed using continuous fifth-order nodal finite elements on a square mesh based on Gauss-Lobatto points as described in \cite{Cohen:2002ta}. We use fifth-order quadrature formulae based on Gauss points for the local projection \eref{eq:projection_local} and Gauss-Lobatto points for the computation of the finite element matrices. The synthetic measurements $\bu_1$ and $\bu_2$ are computed using a conjugate gradient (CG) technique, on a reference fine grid of size $h_0=1/120$, i.e. $L^0_{h_0} = V_{h_0}$ and $\dim(V_{h_0}) = 601\times601$. Then, deterministic noise is added to the solution as
\[
	\bu_{n,\delta}(\bx)  = \bu_n(\bx)  + \delta \! \sum_{m = -M}^M \frac{|m|}{M} \,  \chi\left(\frac{|m|}{M} \, \frac{\bx}{ \sqrt{\delta} }\right),
\]
with $\chi(\bx) =  \cos(2 \pi \, \bx \cdot \be_1) \cos(2 \pi \, \bx \cdot \be_2) (\be_1 + \be_2)$ and $M=20$. At the discrete level, the noise is directly interpolated on the nodal functions generating $L^0_{h_0}$. Moreover, Assumption \eref{eq:measure} is satisfied by construction, yet $\| \bu_{n,\delta}(\bx)  -  \bu_n(\bx)\|_{H^2(\Omega)} =\mathcal{O}(1)$, which entails that a non-regularized direct data differentiation approach with $\dim V_{h_0} = +\infty$ should not converge as $\delta\to0$.

Finally, it is considered that the solution to the elliptic problem \eref{eq:normal} is computed sufficiently accurately with a CG method, i.e. the space $V_{h_0}$ in Table \ref{Table_space} is large enough and the relative stopping criterion of the CG method is good enough.

\paragraph{Simple static case.} The parameters $\alpha$ and $\beta$ are given by formulae \eref{eq:formulae_alpha_beta} with $ \alpha_0 = 22$, $ \beta_0 = 2$, $ N_\alpha =  N_\beta = 1$, $ \alpha_1 = \beta_1 = 18$, $ a_1^{-} = b_1^{-} = 0.1 $,  $  a_1^{+} = b_1^{+} = 0.2 $, $\bx_1^\alpha = \bx_1^\beta = [1/2, 1/2]\trsp$, see Fig. \ref{fig:beta:ref}. The static case corresponds to $\omega_1 = \omega_2 = 0$ in \eref{eq:sol_general}. As discussed in Section \ref{sec:invert:cond}, the invertibility condition in Assumptions \ref{mega_hyp} can be satisfied by choosing appropriate Dirichlet boundary conditions $\bg_1$ and $\bg_2$ in \eref{REFequBC}, such as
\begin{equation}\label{BC_res_num_1}
	\bg_1(\bx) = 1+( \bx \cdot \be_2 ) \, \be_1+( \bx \cdot \be_1 ) \, \be_2, \quad \bg_2(\bx) = 1+( \bx \cdot \be_1 ) \, \be_1+( \bx \cdot \be_2 ) \, \be_2.
\end{equation}

\begin{figure}[th]	
\centering
\subfloat[$\bu_1(\bx)\cdot \be_1$]{\includegraphics[height=0.122\textheight]{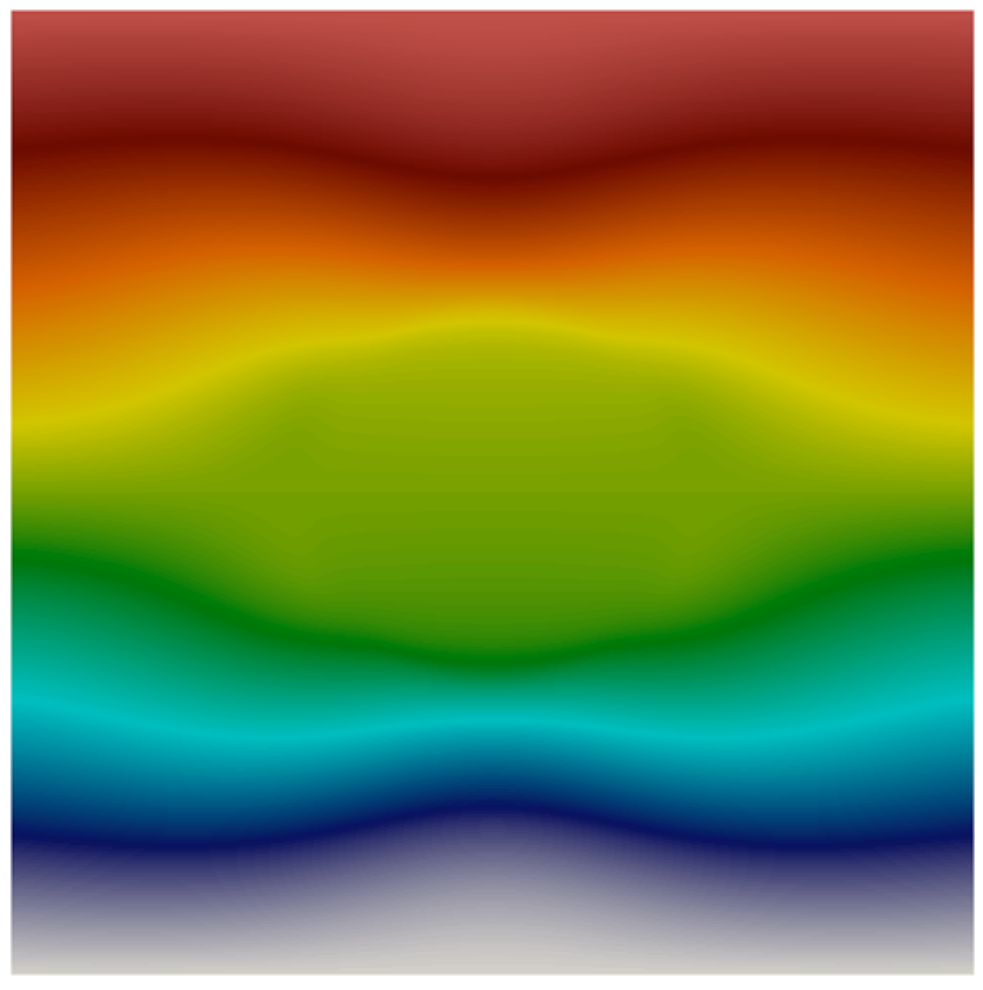}}
\subfloat[$\bu_1(\bx)\cdot \be_2$]{\includegraphics[height=0.122\textheight]{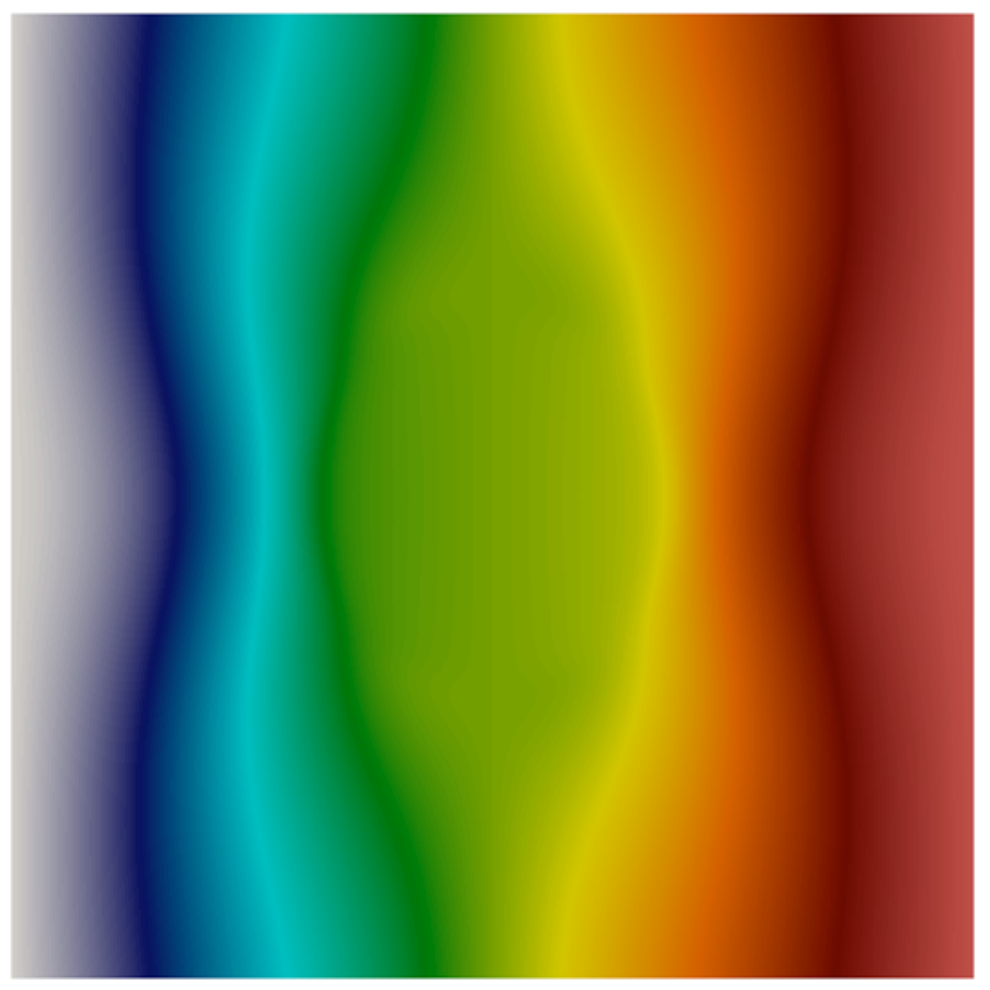}}
\subfloat[$\bu_2(\bx)\cdot \be_1$]{\includegraphics[height=0.122\textheight]{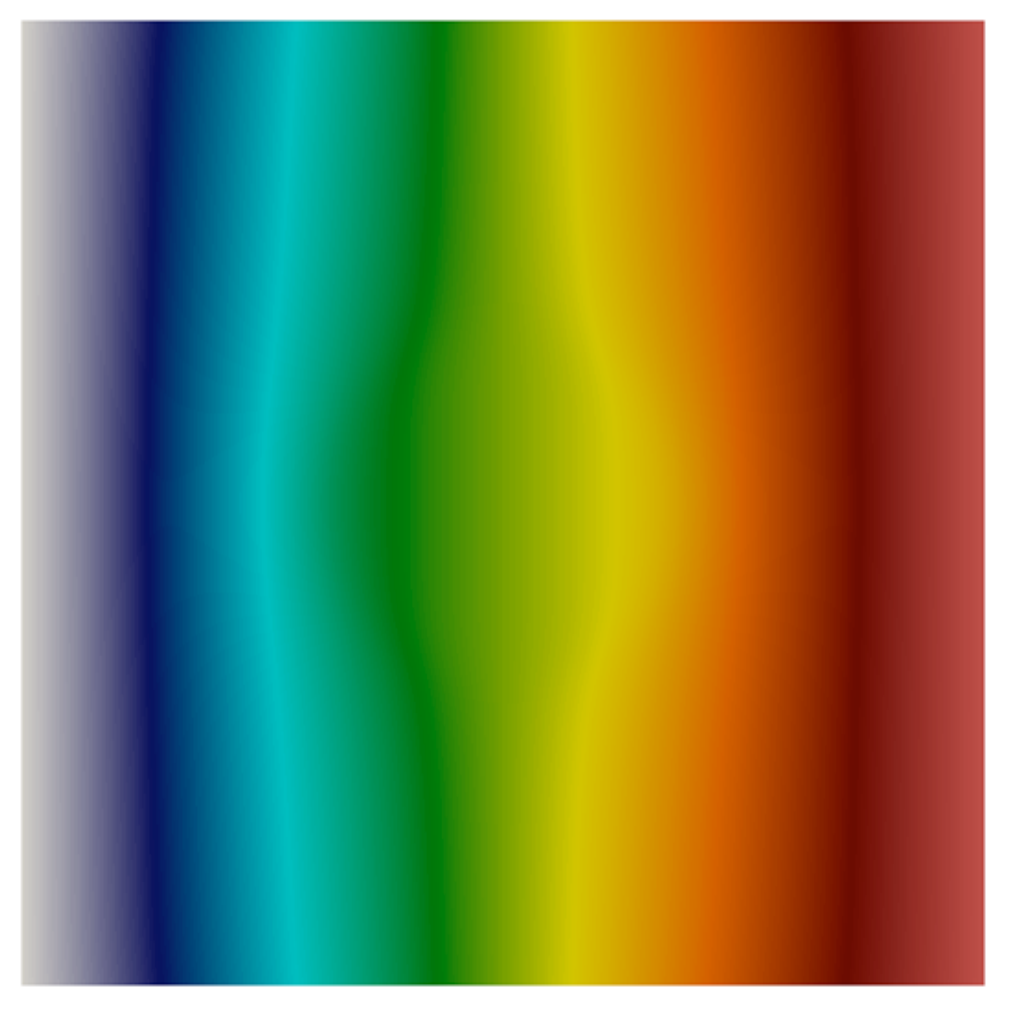}}
\subfloat[$\bu_2(\bx)\cdot \be_2$]{\includegraphics[height=0.122\textheight]{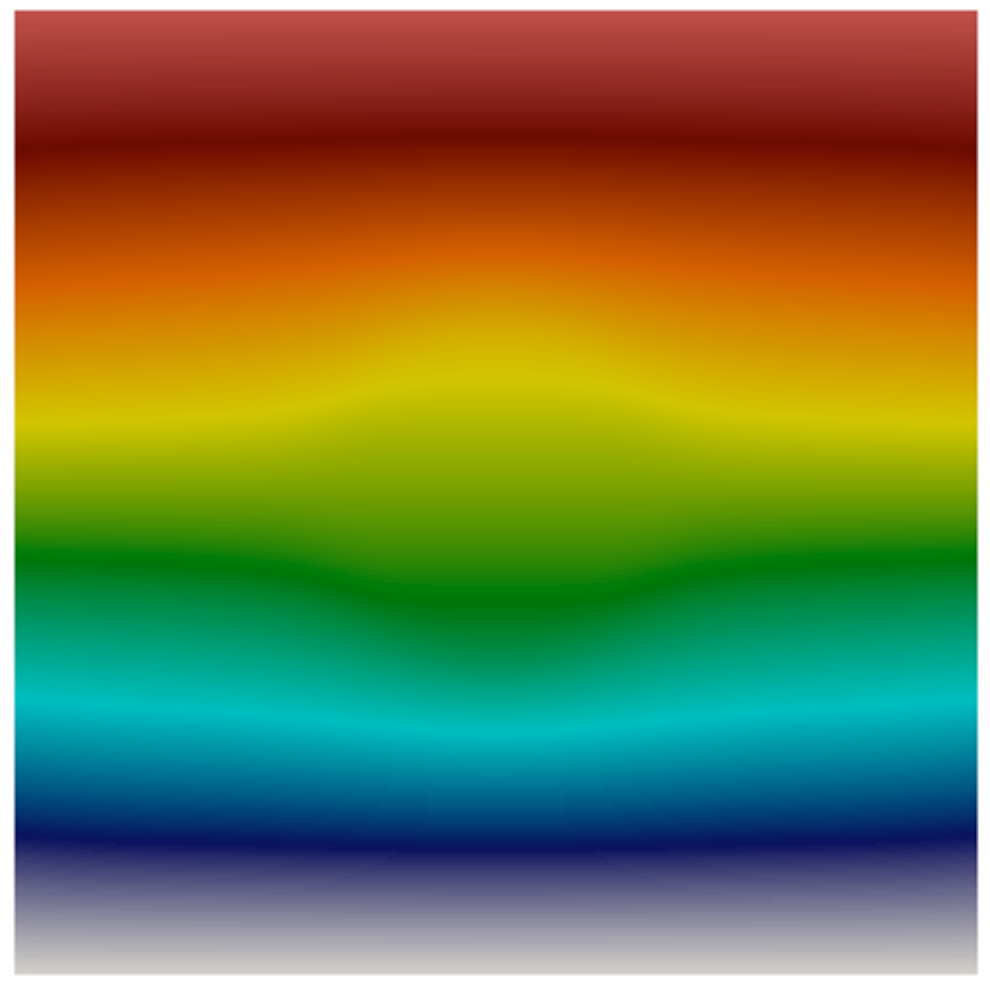}}
\subfloat{\includegraphics[height=0.122\textheight]{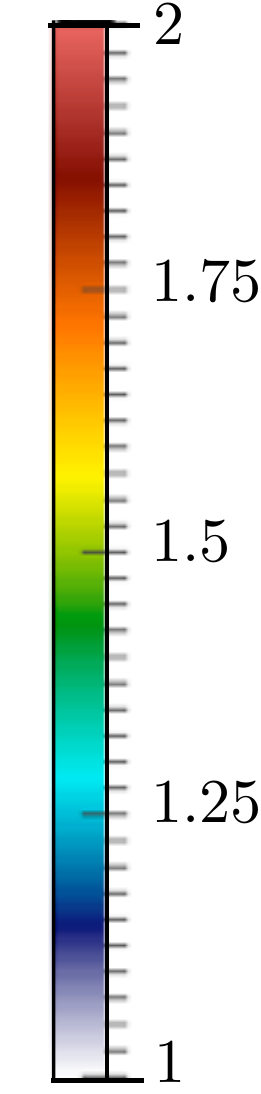}}
\subfloat[$\det\Escr(\bx)$]{\includegraphics[height=0.122\textheight]{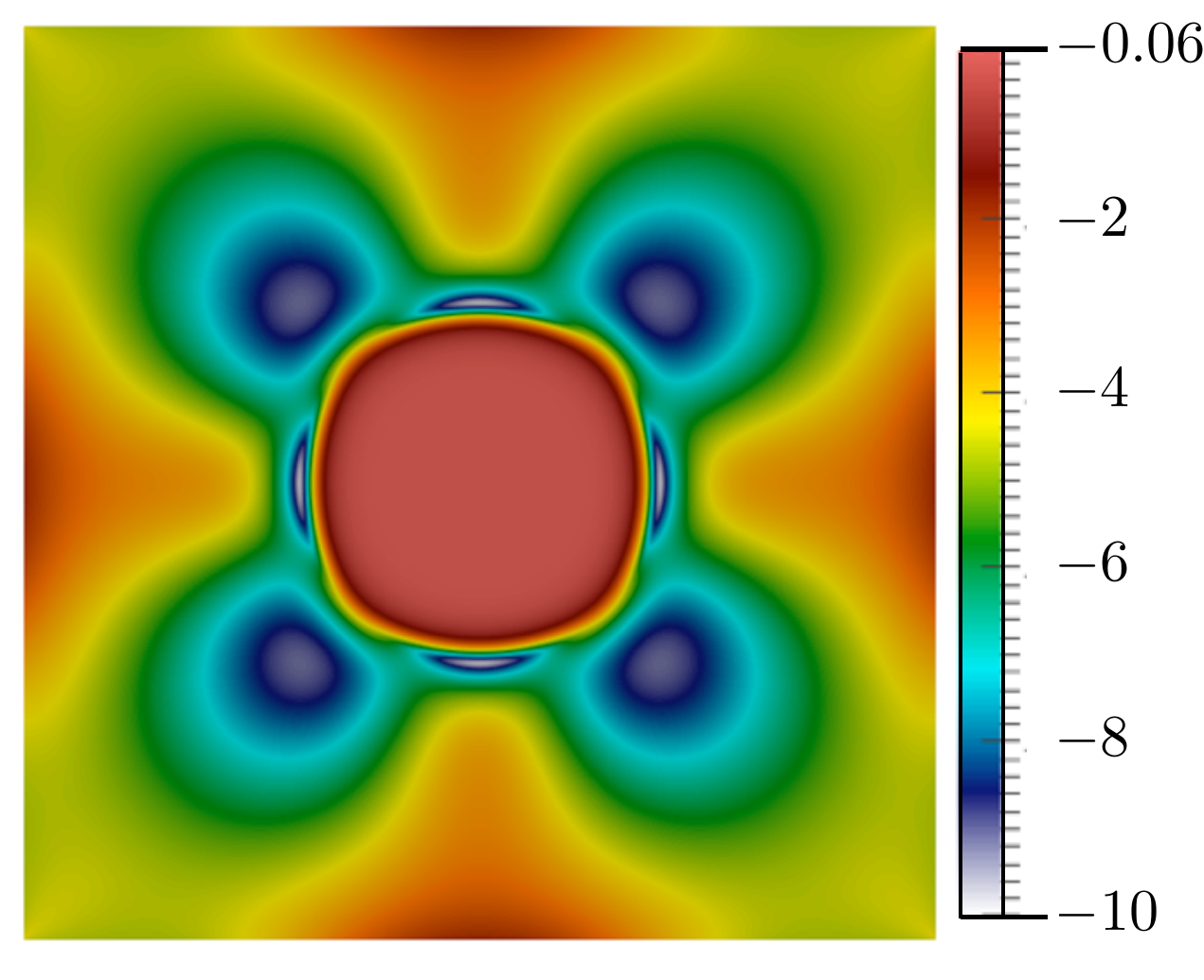}}
\caption{Computed elasticity solutions in the static case.}
\label{fig_solution}
\end{figure}
\noindent Computed elasticity solutions $\bu_1$ and $\bu_2$ are represented Figure \ref{fig_solution}, together with the term $\det\Escr$, defined by \eref{eq:def_E}, which does not vanish so that Assumptions \ref{mega_hyp} are satisfied.

\begin{figure}[t]	
\centering
\includegraphics[width=0.55\textwidth]{./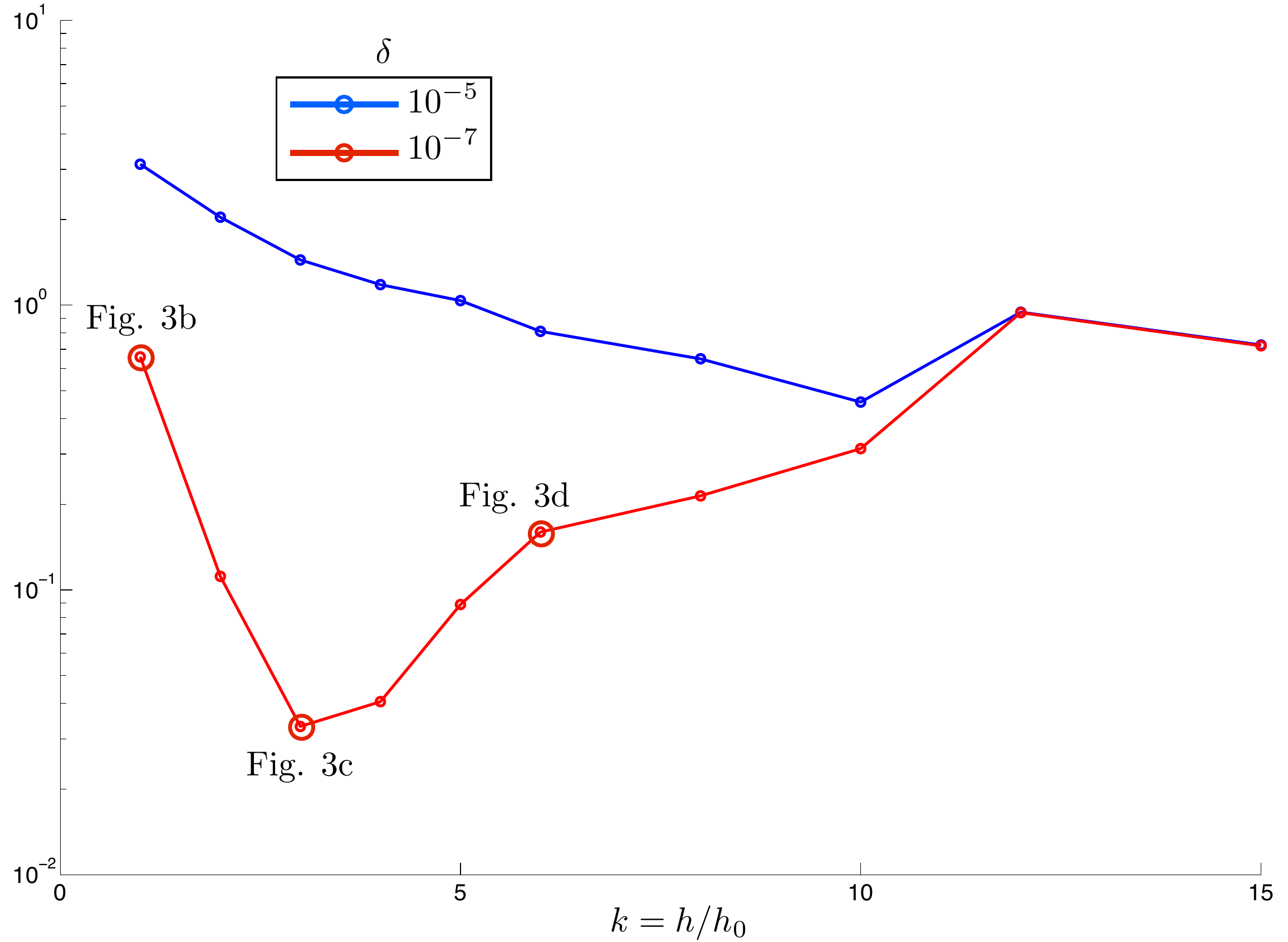}\vspace{-4mm}
\caption{Relative error in $H^1(\Omega)$-norm on reconstructed parameters $(\alpha,\beta)$ w.r.t. mesh size $h = k\, h_0$. The doubly-circled dots correspond to the reconstructions in Figure \ref{fig_h_varies}. }
\label{fig_h_convergence_plot}
\end{figure}

\textit{Convergence in $h$.} The reconstruction behavior is investigated by solving Eqn. \eref{eq:normal} for different mesh sizes $h$ in the definition of $\bLcd$ from Section \ref{sec_reg}. This parameter is varied as $h=k\,h_0$ using an integer $k$ and given the size $h_0$ of the reference mesh associated with the noisy data. Figure \ref{fig_h_convergence_plot} shows the relative $H^1(\Omega)$-error of the reconstruction for noise values $\delta = 10^{-5}$ and $\delta = 10^{-7}$. In the light of Eqns. \eref{eq:first_part_theo_conv} and \eref{eq:second_part_theo_conv}, Figure \ref{fig_h_varies} shows that the computation of the data derivatives is penalizing if the mesh employed is too fine, as negative powers of $h$ are involved. Alternatively, if the mesh is too coarse then the quality of the solutions approximation is too deteriorated, as they involves positive powers of $h$. Hence, there exists an optimal value $h$ corresponding, in each case, to the minimum of the associated curve in Figure \ref{fig_h_convergence_plot}.

\begin{figure}[t!]	
\centering
\subfloat[$\beta(\bx)$]{\includegraphics[height=0.137\textheight]{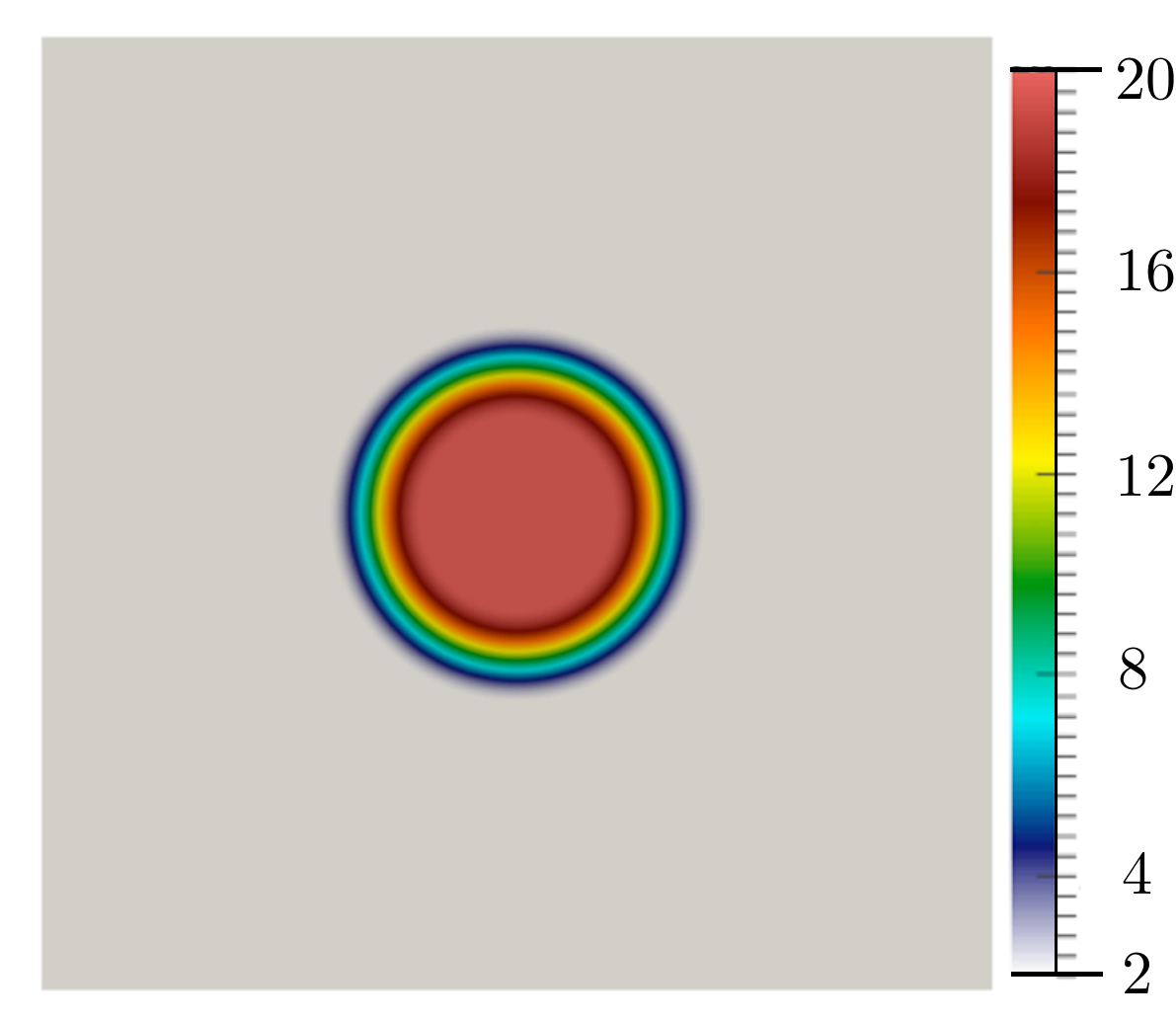} \label{fig:beta:ref}}
\subfloat[$k=1$]{\includegraphics[height=0.137\textheight]{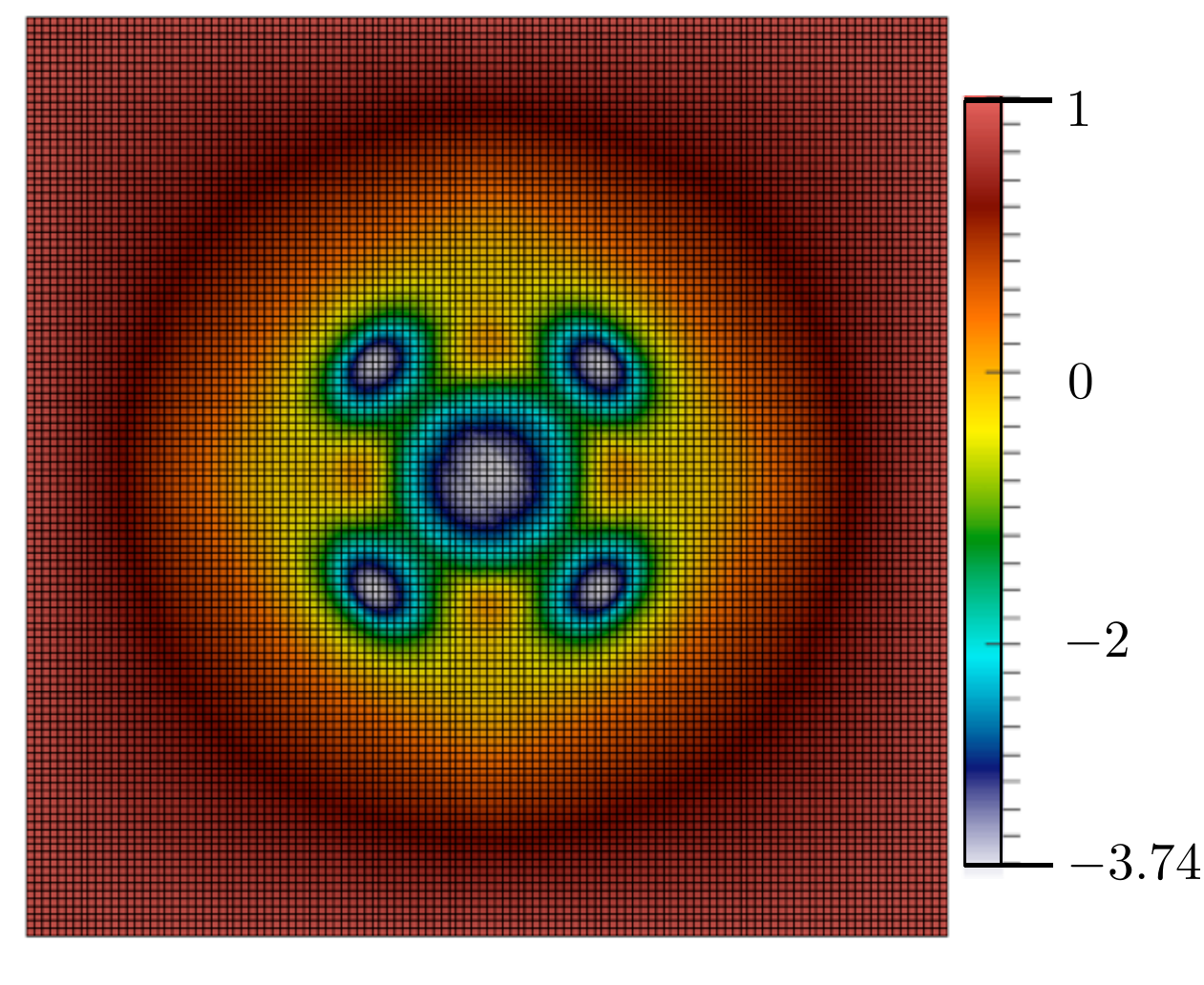}}\hspace{-1mm}
\subfloat[$k=3$]{\includegraphics[height=0.137\textheight]{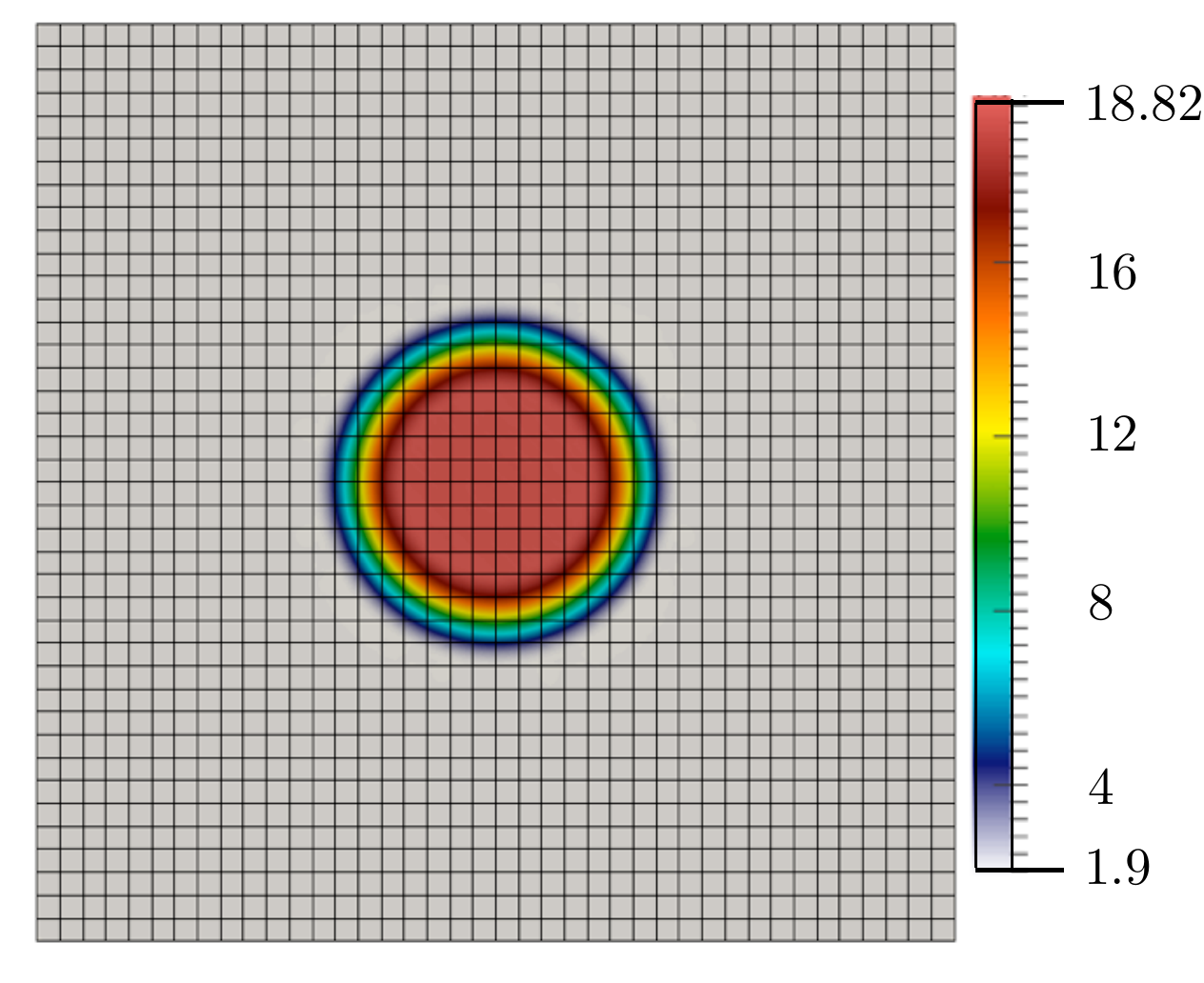}}\hspace{0mm}
\subfloat[$k=6$]{\includegraphics[height=0.137\textheight]{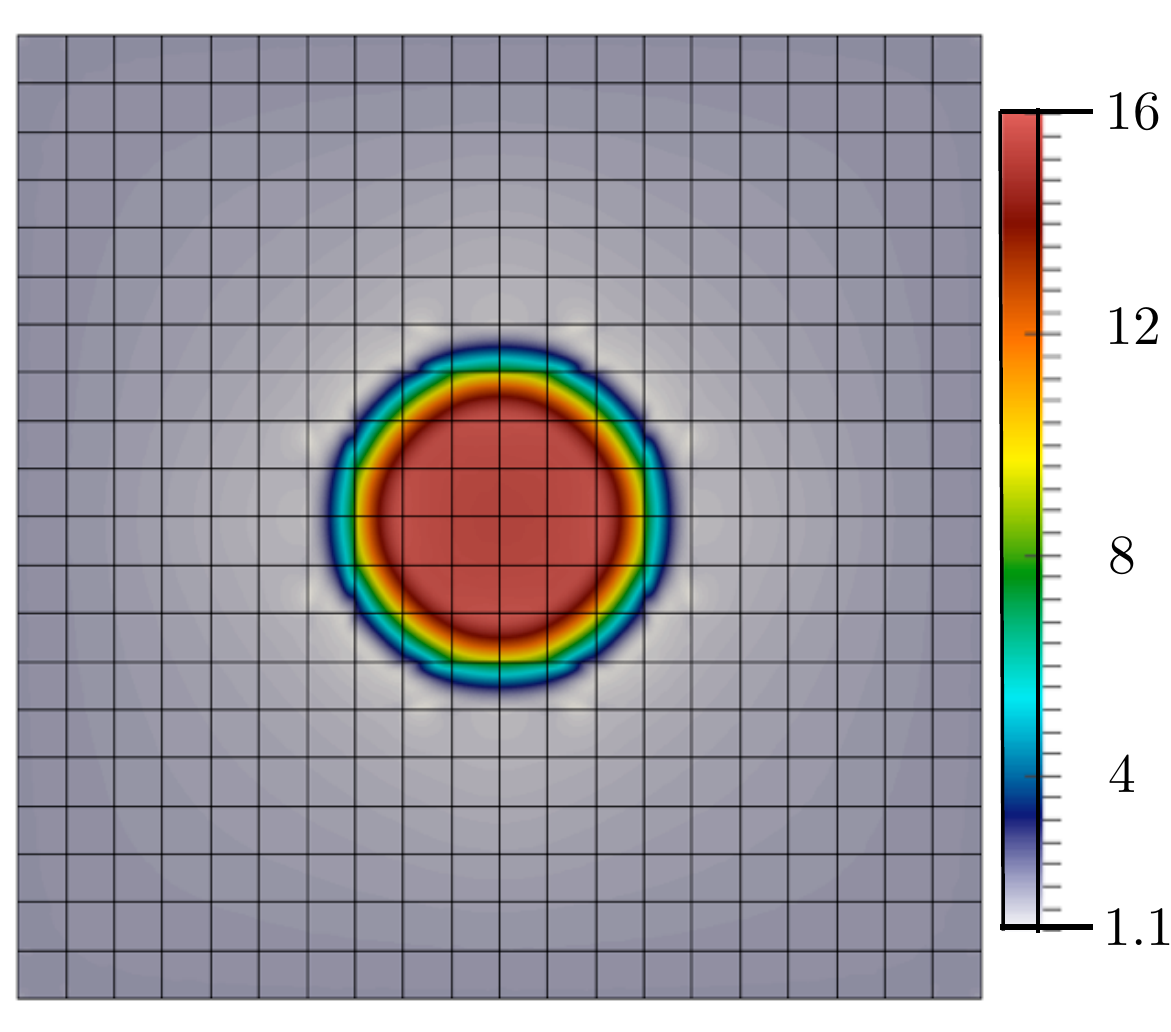}}
\caption{Reconstruction $\beta_\delta(\bx)$ for different values of $h = k\, h_0$ with $\delta = 10^{-7}$.}
\label{fig_h_varies}
\end{figure}

\begin{figure}[b]	
\centering
\includegraphics[width=0.55\textwidth]{./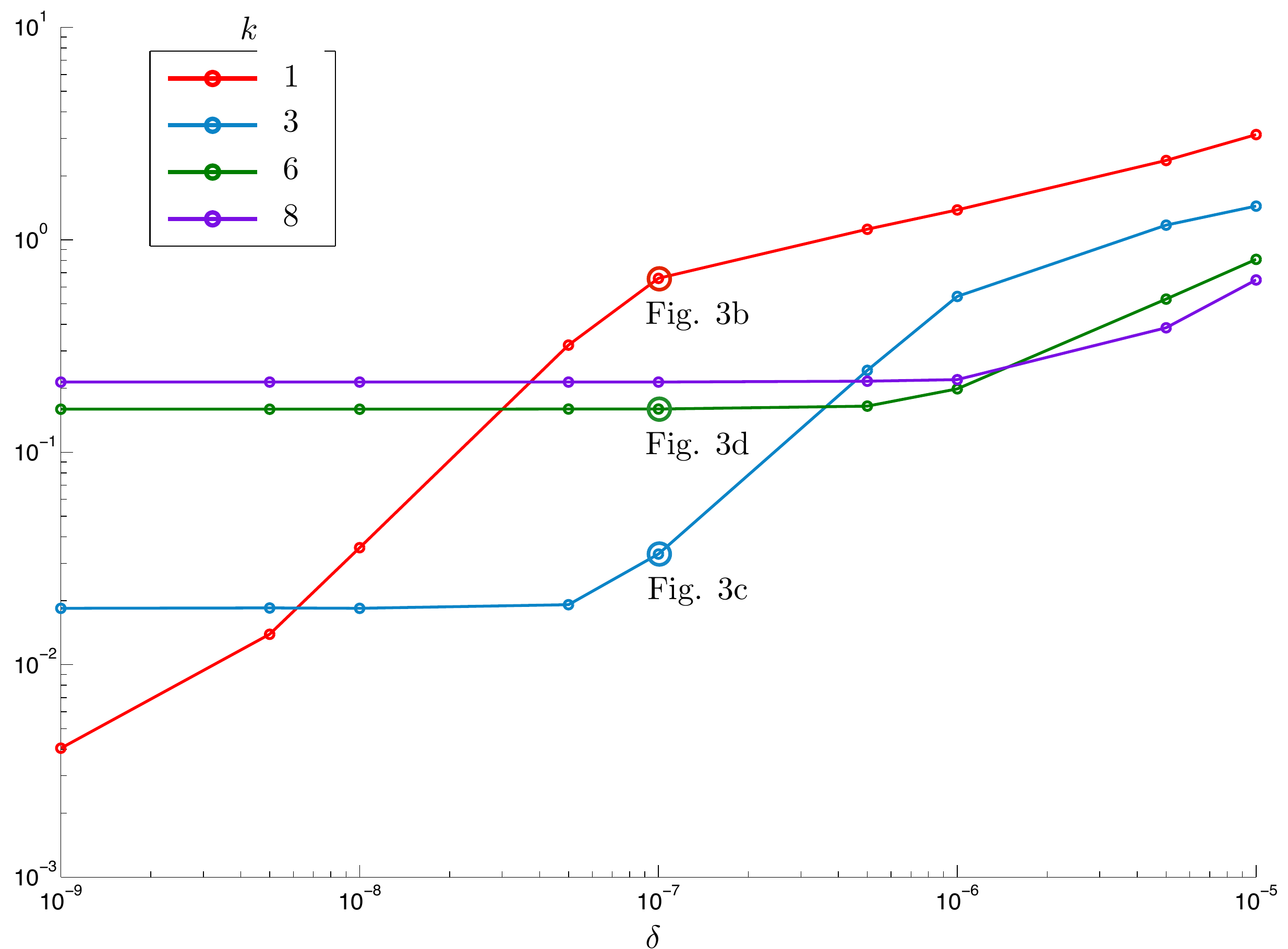}\vspace{-4mm}
\caption{Relative error in $H^1(\Omega)$-norm on reconstructed parameters $(\alpha,\beta)$ w.r.t. noise value $\delta$ for different values of $h= k\, h_0$. The doubly-circled dots correspond to Figure \ref{fig_h_varies}.}
	\label{fig_delta_varies}
\end{figure}

\textit{Convergence in $\delta$.}  The convergence of the reconstruction error with respect to the noise level $\delta$ is now investigated. In Figure \ref{fig_delta_varies}, the relative reconstruction errors in $H^1(\Omega)$-norm are compared for different values $h$. Two expected trends are highlighted. Firstly, at a given noise value $\delta$ there exists an optimal mesh size parameter $h$, for the projection and differentiation steps, for which the reconstruction is the best in $H^1(\Omega)$-norm, as shown previously. Secondly, for a fixed $h$, when the noise level decreases in $L^\infty(\Omega)$-norm the reconstruction quality reaches a plateau. This is due to a loss of resolution associated with the projection on a coarse mesh. These numerical results essentially show that the proposed algorithm achieves the standard trade-off between regularization and resolution.

\paragraph{Frequency dependent case.} In light of formulae \eref{eq:classical_def} and \eref{eq:def_E}, it is clear that using oscillating elasticity solutions, with locally vanishing gradients, might be detrimental to the invertibility condition of Assumptions \ref{mega_hyp}. This implies that low frequency configurations should be preferred so that we set $\omega_1 = 1$ and $\omega_2 = 0$. The boundary conditions are prescribed as
\[
	\bg_1(\bx) = 1+( \bx \cdot \be_1 ) \, \be_1+( \bx \cdot \be_2 ) \, \be_2, \quad \bg_2(\bx) = (1+ \bx \cdot \be_1 + \bx \cdot \be_2) (\be_1 - \be_2).
\]
The exact moduli distributions are computed using \eref{eq:formulae_alpha_beta} and displayed in Fig. \ref{exact:mod}. The modulus of the corresponding elasticity solutions and $\det\Escr$ are plotted in Figure \ref{fig_solution_frequency}. 

\begin{figure}[t]	
\centering
\subfloat[$|\bu_1(\bx)|$]{\includegraphics[height=0.15\textheight]{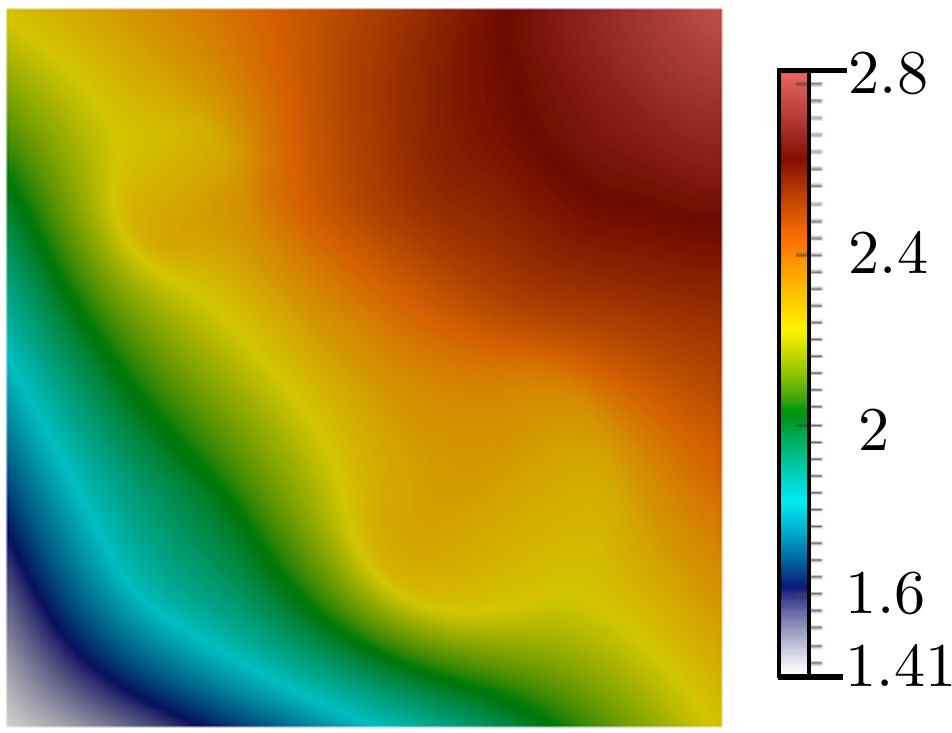}}\hspace{2mm}
\subfloat[$|\bu_2(\bx)|$]{\includegraphics[height=0.15\textheight]{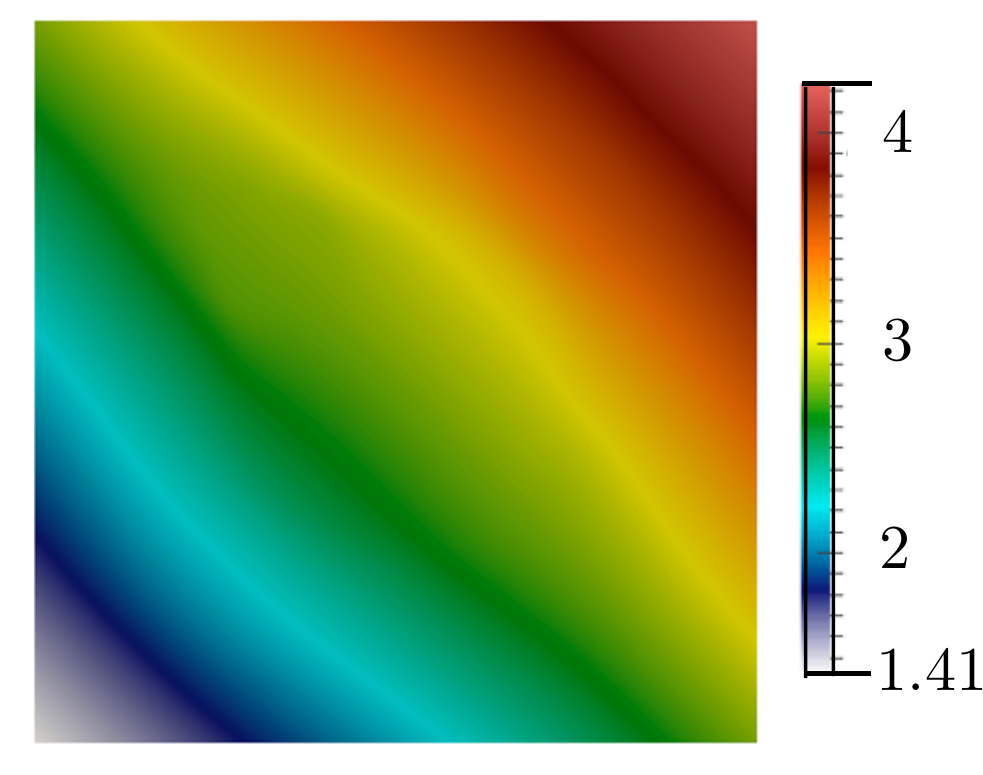}}\hspace{2mm}
\subfloat[$\det\Escr(\bx)$]{\includegraphics[height=0.15\textheight]{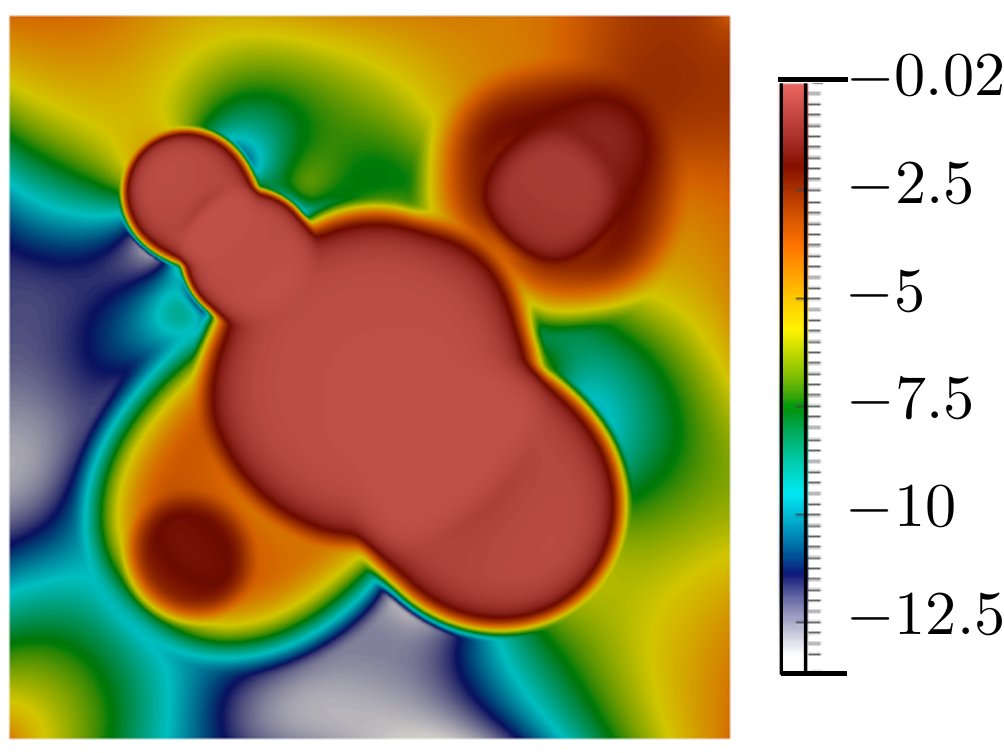}}
\caption{Computed elasticity solutions in the frequency dependent case}
\label{fig_solution_frequency}
\end{figure}

Figure \ref{fig_reconstruction_frequency} shows three reconstructions. Fig. \ref{reconst:mod} corresponds to a noise-free configuration ($\delta = 0$) and no regularization ($h=h_0=1/120$). The associated relative reconstruction error in $H^1(\Omega)$-norm is $0.0033$. Figure \ref{reconst:mod:no} corresponds to $\delta = 10^{-7}$ and mesh size $h=1/120$, i.e. a reconstruction without regularization. The corresponding relative error is $0.83$ and the reconstruction of $\alpha(\bx)$, $\beta(\bx)$ is poor. Finally, for this noise value but with regularization, i.e. $h=1/24$, then the relative error decreases to $0.67$ and the reconstruction is qualitatively improved in terms of identification of the number of heterogeneities, their locations and relative sizes, see Fig. \ref{reconst:mod:no:reg}. 

\begin{figure}[th!]	
\centering
\subfloat[$\big(\alpha(\bx), \beta(\bx) \big)$]{\label{exact:mod}\includegraphics[height=0.3\textheight]{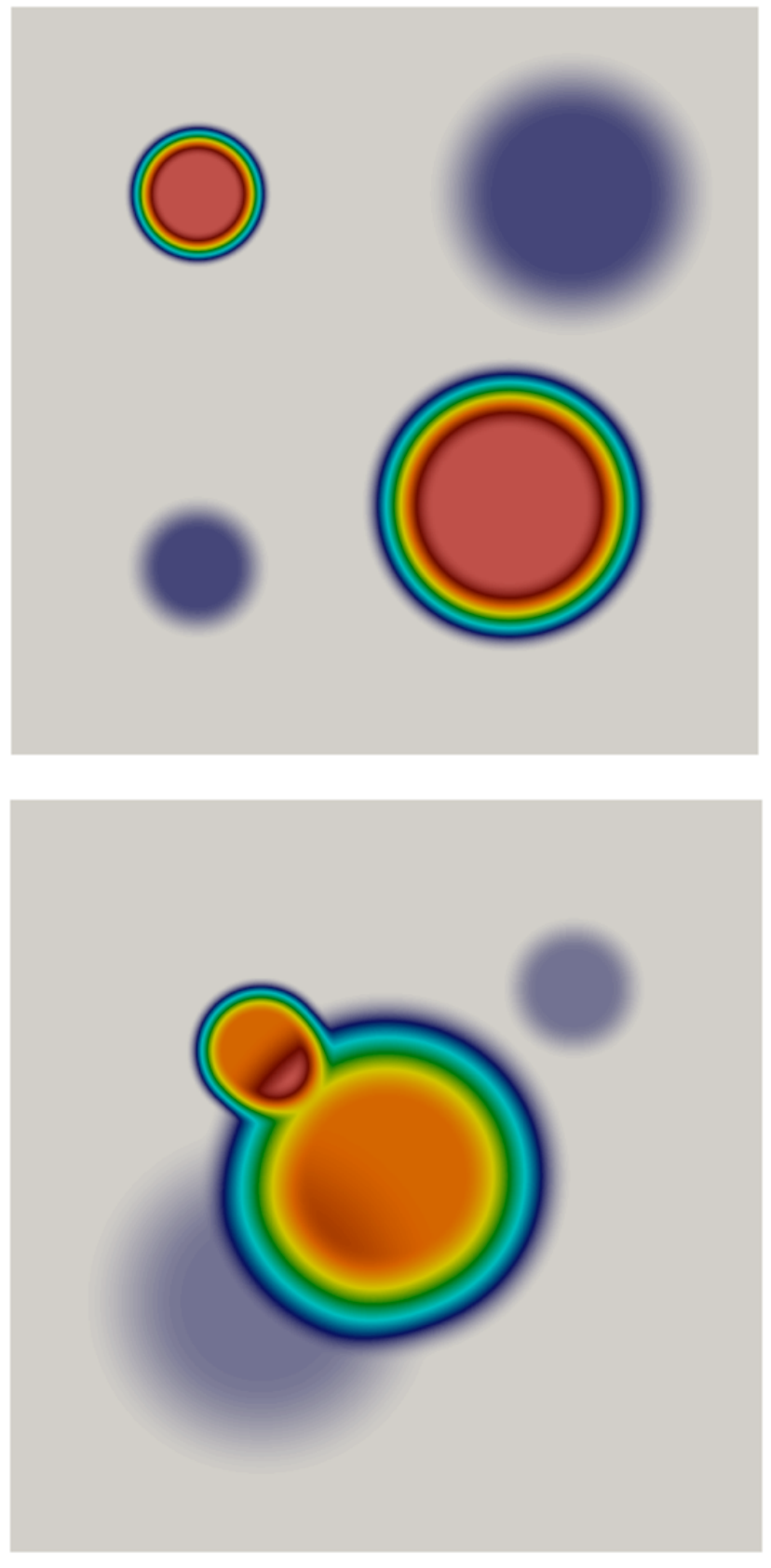}}\hspace{0mm}
\subfloat[$\delta = 0, k=1$]{\label{reconst:mod}\includegraphics[height=0.3\textheight]{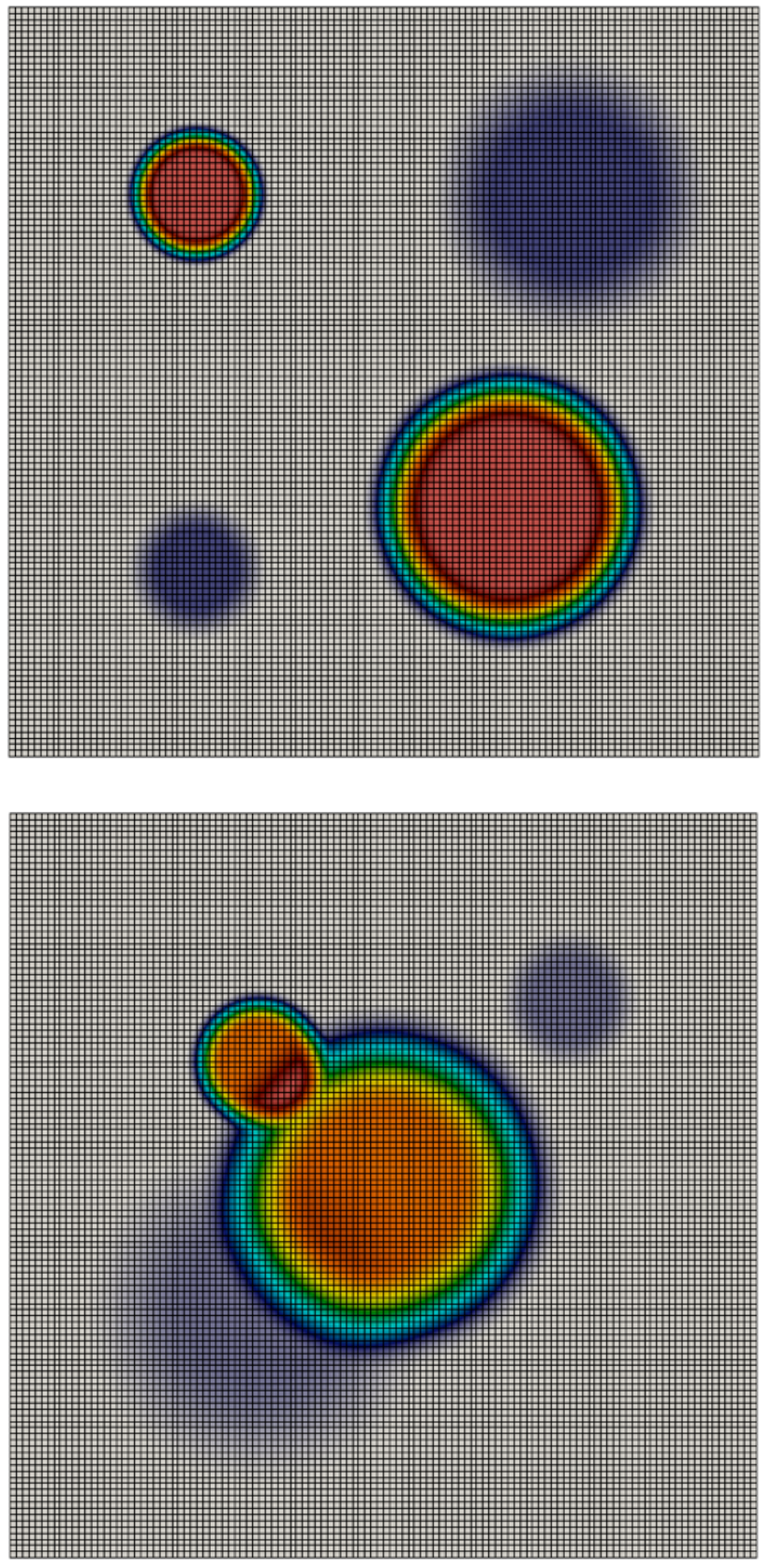}}\hspace{0mm}
\subfloat[$\delta = 10^{-7}, k=1$]{\label{reconst:mod:no}\includegraphics[height=0.3\textheight]{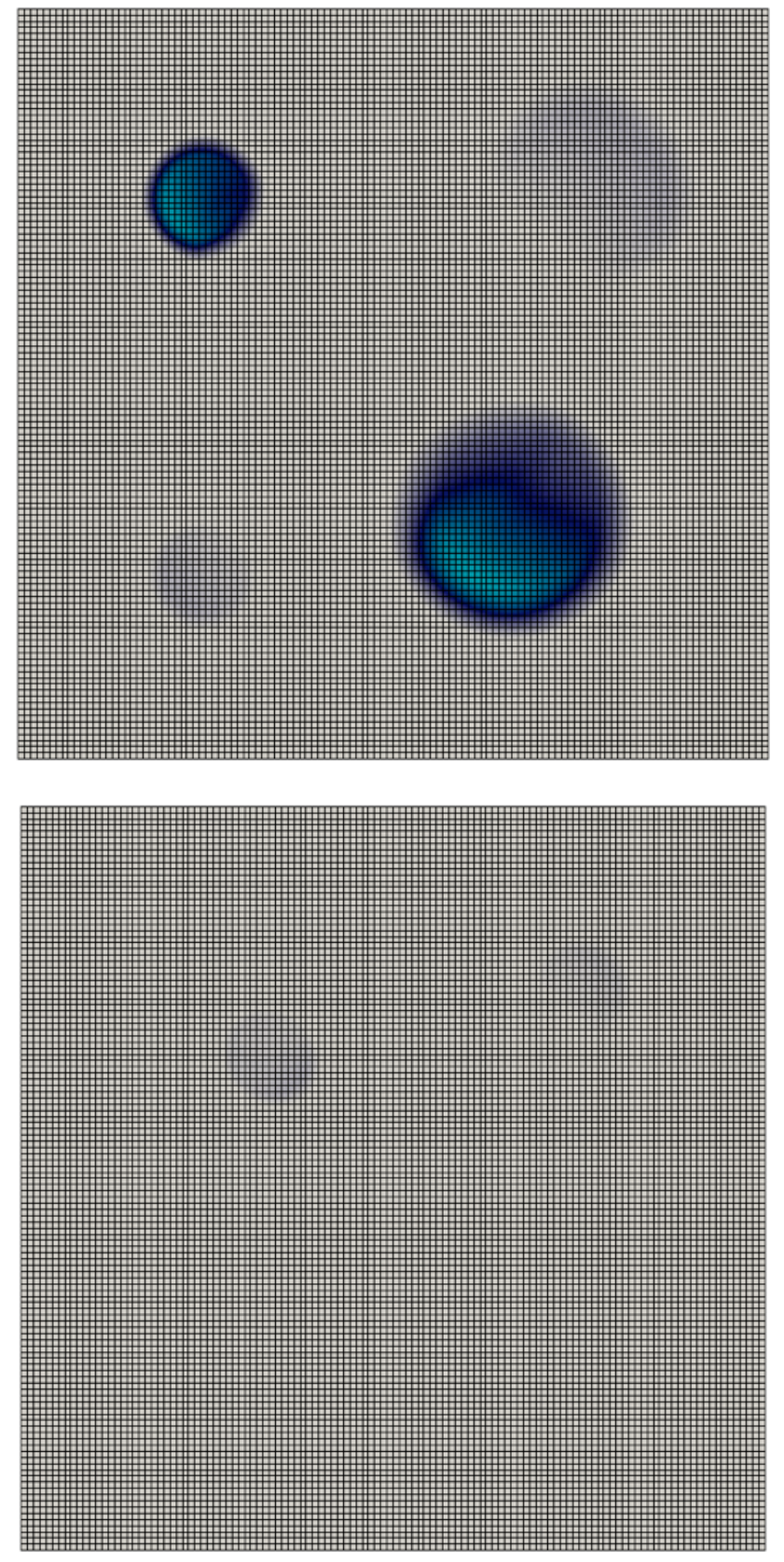}}\hspace{0mm}
\subfloat[$\delta = 10^{-7}, k=5$]{\label{reconst:mod:no:reg}\includegraphics[height=0.3\textheight]{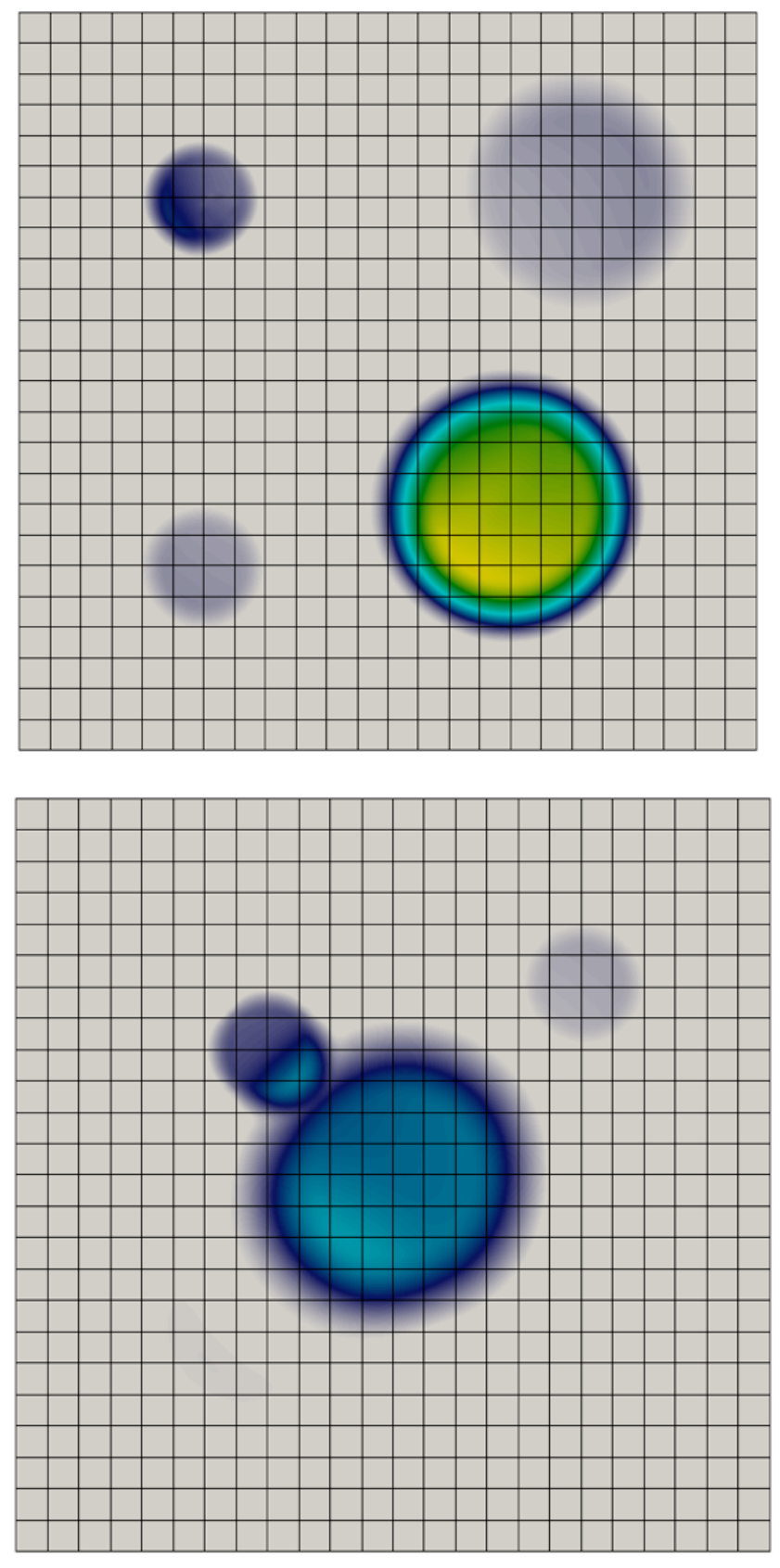}}
\subfloat{\includegraphics[height=0.3\textheight]{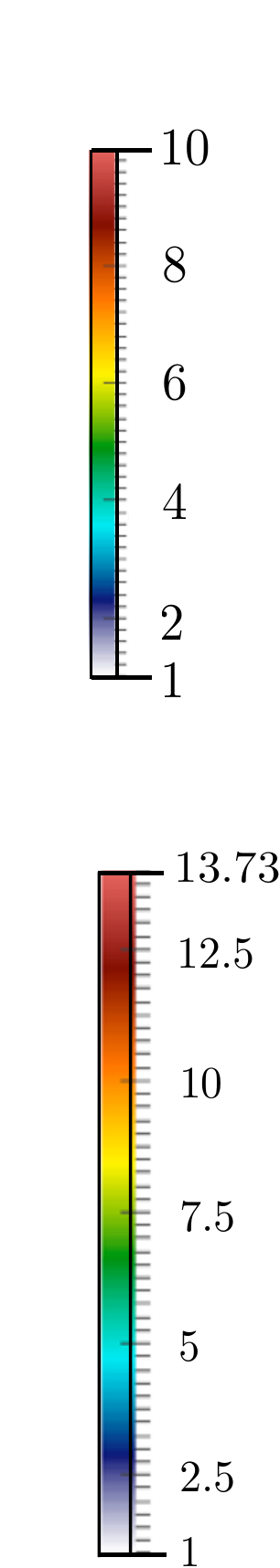}}
\caption{(a) Exact values of $\alpha(\bx)$ \emph{(top)} and $\beta(\bx)$ \emph{(bottom)}; Corresponding reconstructions with (b) no noise nor regularization, (c) with noise but no regularization, (d) with noise and regularization.}
\label{fig_reconstruction_frequency}
\end{figure}

\begin{figure}[bht]	
\centering
\subfloat[$\big(\alpha(\bx), \beta(\bx) \big)$]{\label{exact:rand}\includegraphics[height=0.3\textheight]{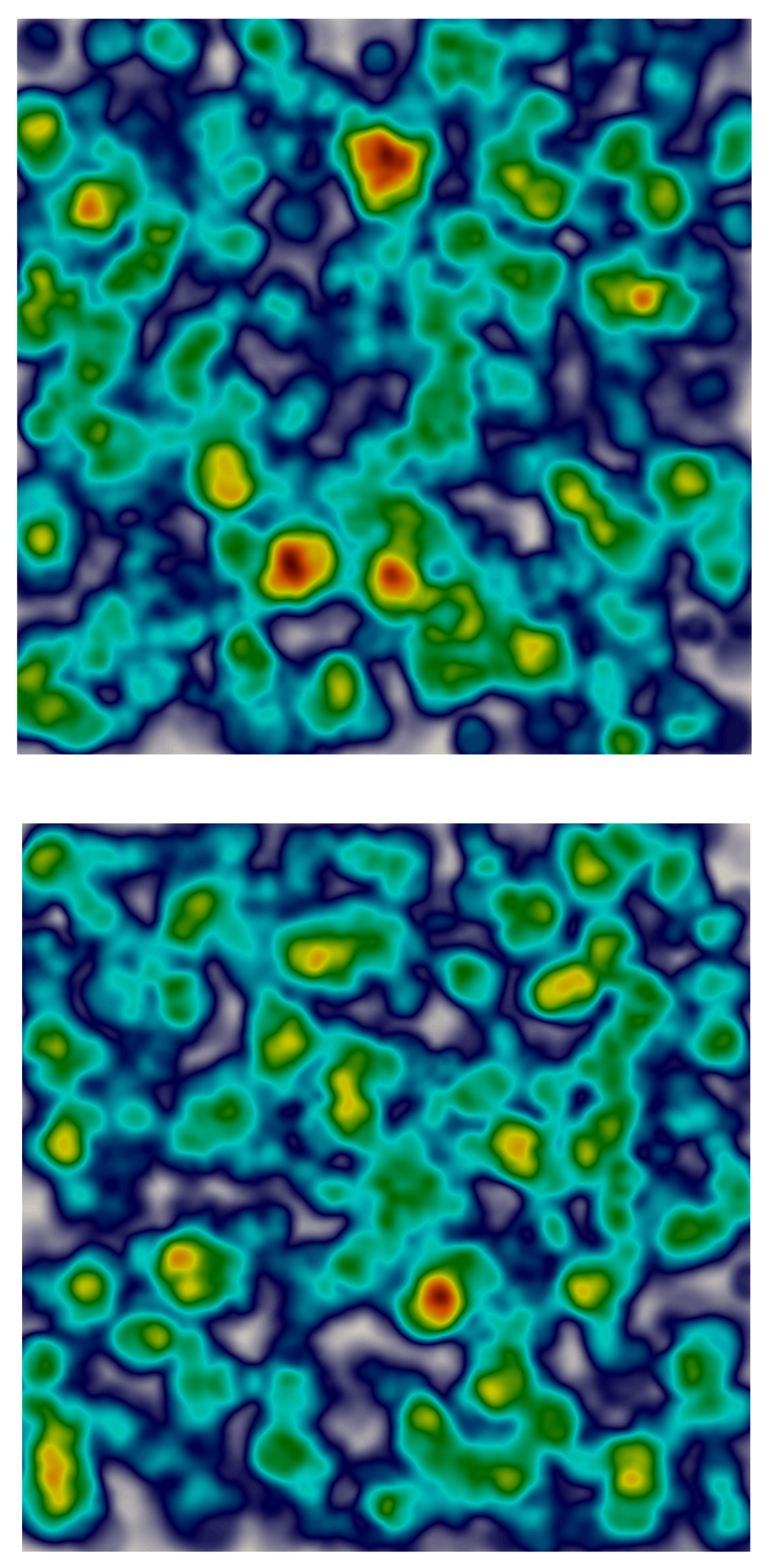}}\hspace{-1mm}
\subfloat[$\delta = 0, k=1$]{\label{reconst:rand}\includegraphics[height=0.3\textheight]{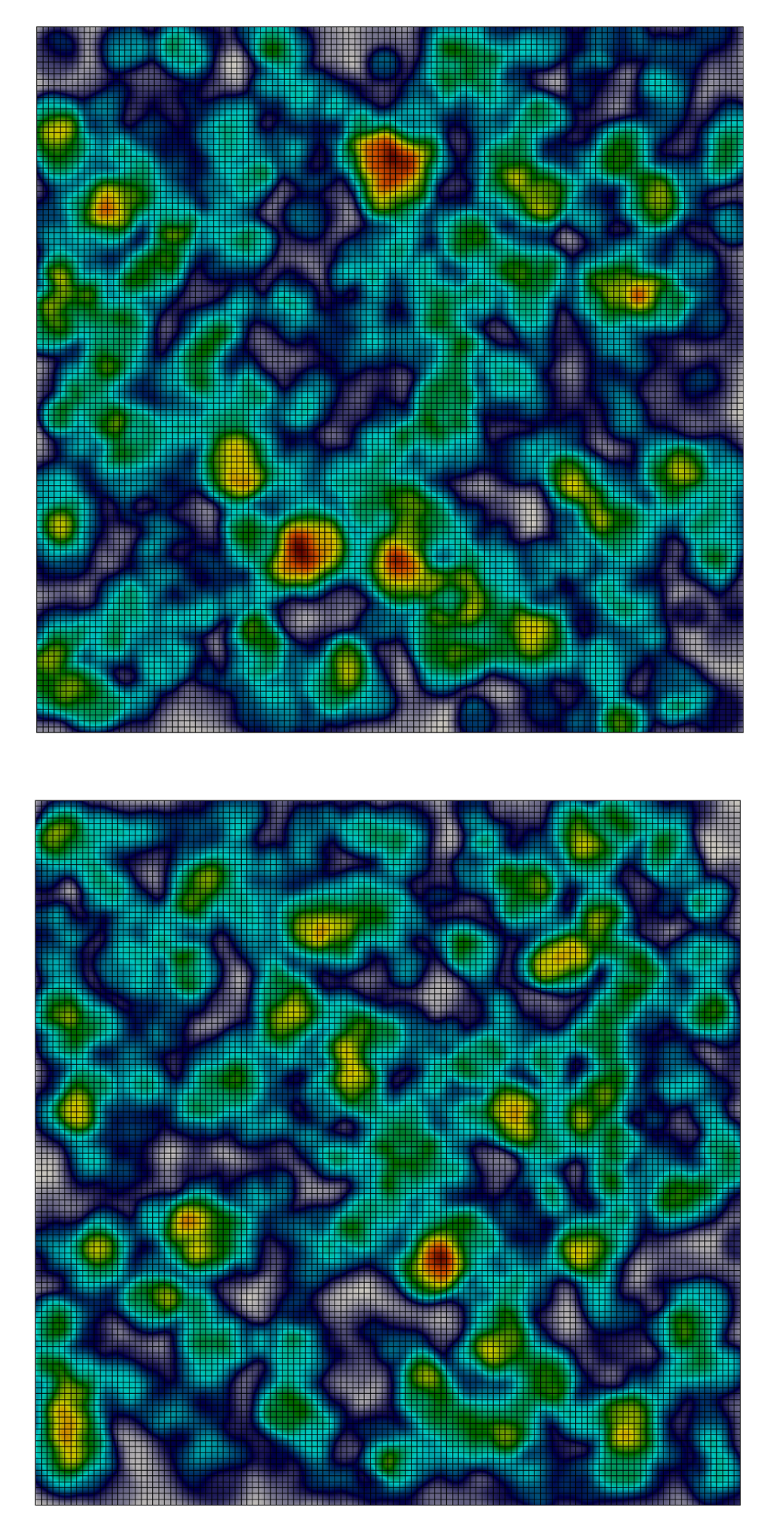}}\hspace{-2mm}
\subfloat[$\delta = 10^{-6}, k=1$]{\label{reconst:rand:no}\includegraphics[height=0.3\textheight]{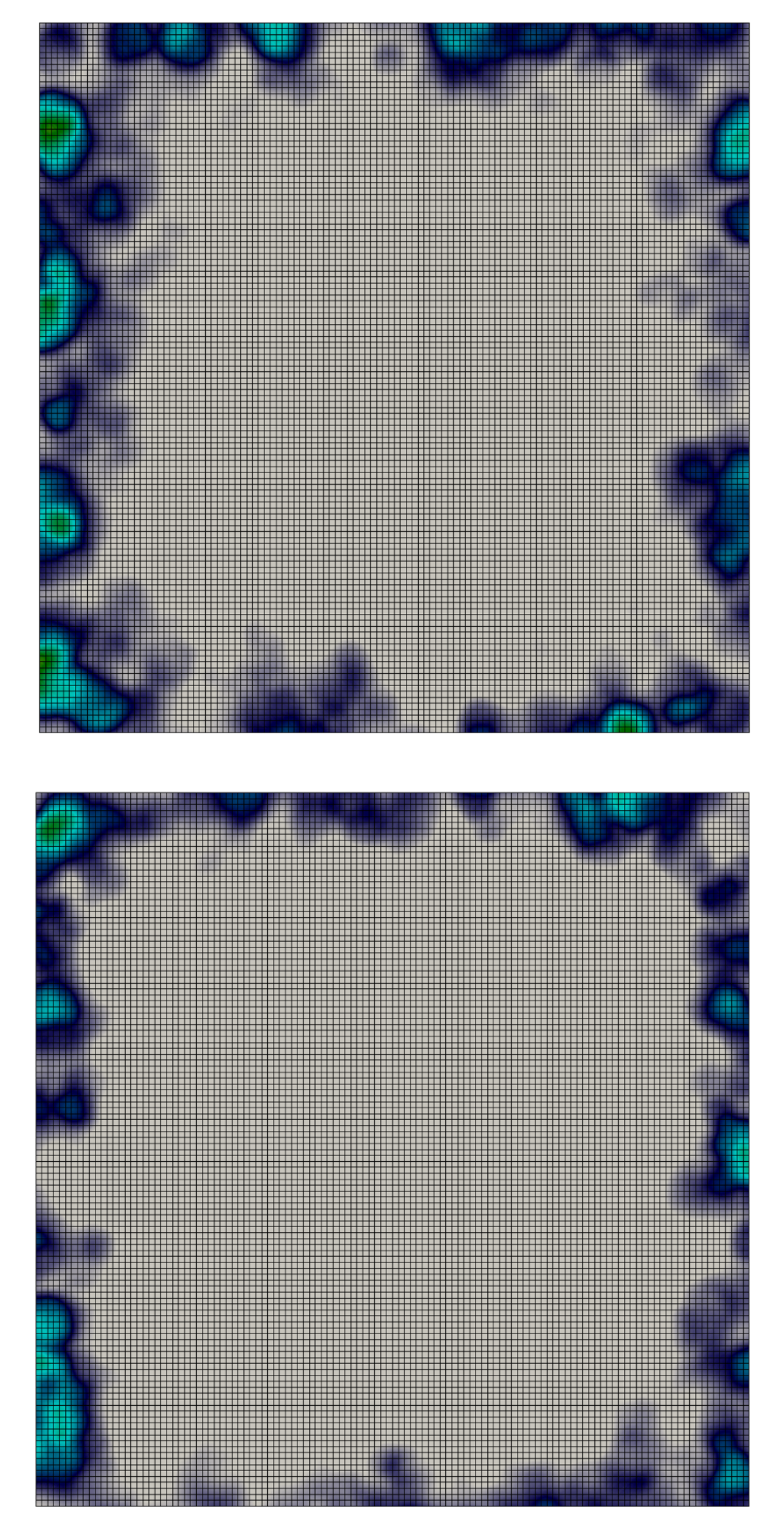}}\hspace{-2mm}
\subfloat[$\delta = 10^{-6}, k=5$]{\label{reconst:rand:no:reg}\includegraphics[height=0.3\textheight]{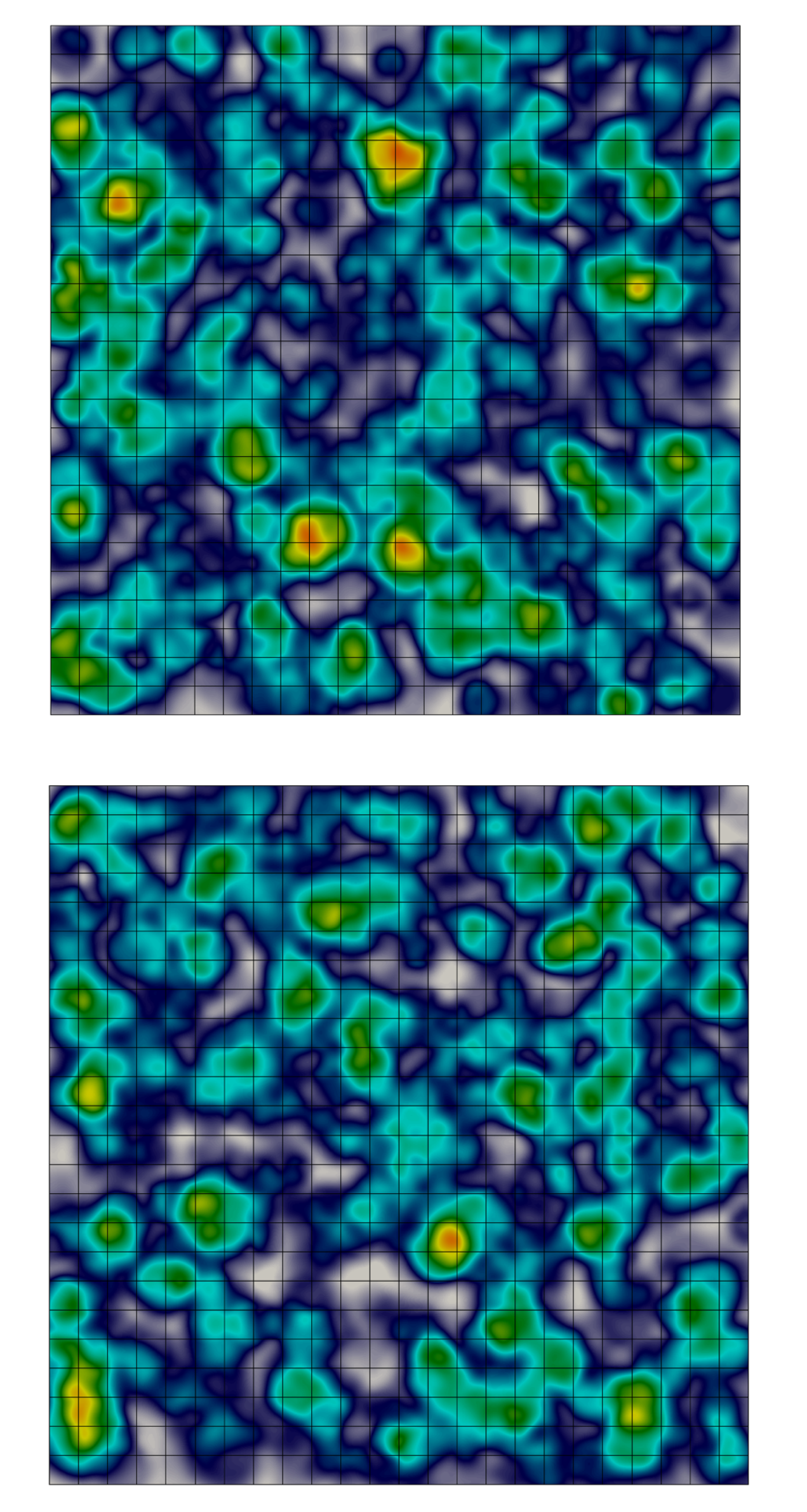}}
\subfloat{\includegraphics[height=0.3\textheight]{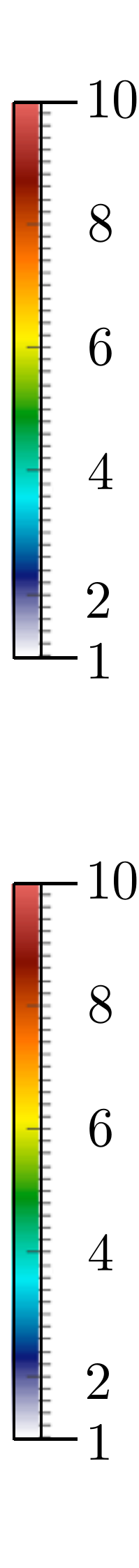}} 
\caption{(a) Exact values of $\alpha(\bx)$ \emph{(top)} and $\beta(\bx)$ \emph{(bottom)}; Corresponding reconstructions with (b) no noise nor regularization, (c) with noise but no regularization, (d) with noise and regularization.}
\label{fig_reconstruction_random}
\end{figure}

\paragraph{Random parameters distributions} This section is concluded with the investigation of a case of randomly varying moduli. Their studied exact distributions in Fig. \ref{exact:rand} are constructed from \eref{eq:formulae_alpha_beta}, using $N_\alpha=N_\beta=1000$ and random parameters. They are characterized by a correlation length much smaller than the domain size in order to give rise to oscillating strain field solutions. The settings are these of the previous paragraph but $\omega_1=\omega_2=0$. The reconstruction in Fig. \ref{reconst:rand} from noise-free measurements is fairly good, with a relative error of $0.0067$. A noise value $\delta=10^{-6}$ makes the non-regularized reconstruction ineffective: Fig. \ref{reconst:rand:no}, relative error $0.87$. Alternatively, the recovery of the unknowns is remarkably improved using the proposed regularization method as shown in Fig. \ref{reconst:rand:no:reg} with a relative reconstruction error of $0.20$.

\section{Conclusion}

An algorithm for the quantitative reconstruction of constitutive elasticity parameters has been investigated. It is based on the construction and inversion of a linear operator computed from full-field internal measurements, possibly noisy, of elasticity solutions.

The highlights of this study are: 
\begin{enumerate}[leftmargin=0cm,itemindent=8mm,labelwidth=\itemindent,labelsep=0cm,align=left,label=(\roman*),noitemsep,topsep=0pt] 
	\item Construction of a linear operator from available full-field measurements which in turn are required to satisfy some invertibility and compatibility conditions.
	\item Characterization of this symmetric definite positive linear operator and derivation of an associated variational formulation. 
	\item Investigation of a numerical differentiation scheme based on regularizations by projections on coarse meshes but within a high-order finite element formulation.
	\item Theoretical results proving convergence of the algorithm when noise in the data vanishes in the $L^\infty(\Omega)$-norm yet not necessarily in the $H^2(\Omega)$-norm.
	\item The proposed algorithm is theoretically valid at any frequency. Invertibility and compatibility conditions make it applicable at least to low-frequency configurations.
	\item Numerical results highlighting the method's potential for practical applications.
\end{enumerate}

Looking forward, it would be relevant to address the cases where the invertibility or compatibility conditions might locally be not satisfied, e.g. when dealing with highly oscillating measurements. A possible strategy is to work with a larger set of data so that inversion formula \eref{sys:op:ref} can be constructed pointwise using the pair of solutions maximizing $| \det \Escr \,  |$ locally. Moreover, the proposed approach might be extended to the case of non-complete data, where either internal measurements or boundary conditions are partially available. Finally, while the approach finds direct applications in the field of non-destructive material testing, its extension to the model of nearly incompressible solids used in medical imaging is the subject of ongoing research.

\vspace{-3mm}

\section*{References}\vspace{-3mm}
\bibliographystyle{unsrt}
\bibliography{biblio}

\end{document}